\documentclass[11pt]{article}
\usepackage[utf8]{inputenc}

\usepackage[numbers,sort&compress]{natbib}

\usepackage{algorithm}
\usepackage{pgf}
\usepackage{tikz}
\usetikzlibrary{arrows, decorations.pathmorphing, backgrounds, positioning, fit, petri, automata}
\definecolor{yellow1}{rgb}{1,0.8,0.2}

\usepackage{amsmath,amsthm,amsfonts,amssymb,amsbsy,amsopn,amstext}
\usepackage{graphicx,color,subfigure}
\usepackage{mathbbol,mathrsfs}
\usepackage{float,ccaption}
\usepackage{indentfirst}
\usepackage{appendix}

\usepackage{authblk}
\usepackage{lipsum}


\newtheorem{thm}{Theorem}

\newtheorem{lem}{Lemma}

\newtheorem{ass}{Assumption}


\renewcommand{\d}{\ensuremath{\mathrm{d}}} 
\renewcommand{\P}{\ensuremath{\mathbb{P}}} 
\newcommand{\Cov}{\ensuremath{\mathrm{Cov}}} 
\newcommand{\define}{:=}

\DeclareMathOperator*{\ri}{ri}
\DeclareMathOperator*{\st}{s.t.}

\DeclareMathOperator*{\amax}{argmax}
\DeclareMathOperator*{\amin}{argmin}

\setlength{\textwidth}{16cm} \setlength{\textheight}{23cm}
\setlength{\oddsidemargin}{0.0cm} \setlength{\evensidemargin}{0.0cm}
\setlength{\topmargin}{-1.5cm} \setlength{\parskip}{0.25cm}

\newcommand{\al}{\ensuremath{\alpha}}

\newcommand{\q}{\mathrm{Q}}

\begin{document}
\title{Asymptotic properties of dual averaging algorithm for constrained distributed stochastic optimization\footnote{The work of the first author and the third author is supported by NSFC 11971090 and Fundamental Research Funds for the Central Universities under grant DUT19LK24. The research of the second author is supported by NSFC under grant 61203118, and the Fundamental Research Funds for the Central Universities under grant DUT20LK03.}}

\author[a]{Shengchao Zhao\thanks{email: zhaoshengchao@mail.dlut.edu.cn}}
\author[a]{Xing-Min Chen\thanks{email: xmchen@dlut.edu.cn}}
\author[a]{Yongchao Liu\thanks{email: lyc@dlut.edu.cn}}
\affil[a]{School of Mathematical Sciences, Dalian University of Technology, Dalian 116024, China}


\maketitle
\begin{abstract}
Considering the constrained stochastic optimization problem over a time-varying random network, where the agents are to collectively minimize a sum of objective functions subject to a common constraint set, we investigate asymptotic properties of a distributed algorithm based on dual averaging of gradients. Different from most existing works on distributed dual averaging algorithms that mainly concentrating on their non-asymptotic properties, we not only prove almost sure convergence and the rate of almost sure convergence, but also asymptotic normality and asymptotic efficiency of the algorithm. Firstly, for general constrained convex optimization problem distributed over a random network, we prove that almost sure consensus can be archived and the estimates of agents converge to the same optimal point. For the case of linear constrained convex optimization, we show that the mirror map of the averaged dual sequence identifies the active constraints of the optimal solution with probability 1, which helps us to prove the almost sure convergence rate and then establish asymptotic normality of the algorithm. Furthermore, we also verify that the algorithm is asymptotically optimal. To the best of our knowledge, it seems to be the first asymptotic normality result for constrained distributed optimization algorithms. Finally, a numerical example is provided to justify the theoretical analysis.
\end{abstract}


\textbf{Key words.}  constrained distributed  stochastic optimization, distributed dual averaging method, almost sure convergence, asymptotic normality,  asymptotic efficiency


\section{Introduction}
Distributed algorithms for solving optimization problems that are defined over  networks have been receiving increasing attention from researchers since the earlier seminal work \cite{tsitsiklis1984problems,tsitsiklis1986distributed,bertsekas1989parallel}.
The most concerned problem among which is to optimize a sum of local objective functions of agents subject to the intersection of their local constraint sets, where the agents are connected through a communication network with each objective and constraint held privately. A large number of problems, such as multi-agent coordination \cite{ren2008distributed}, wireless networks \cite{naghshineh1996distributed,fitzek2006cooperation}, machine learning \cite{lian2017can}, can be transformed into distributed optimization problems. In practice these problems are often random or large-scale, so they are very suitable to be solved by stochastic approximation (SA) based distributed algorithms. Over the last decades, numerous algorithms for distributed stochastic optimization have been developed and various scenarios have been considered, such as stochastic sub-gradient \cite{ram2010distributed}, distributed dual averaging \cite{duchi2011dual}, random gradient-free \cite{yuan2014randomized,chen2017strong}, push-sum method \cite{nedic2016push-sum}; or, with the same local constraint \cite{bianchi2012convergence}, with the different local constraint \cite{shah2018distributed,lee2013distributed}, with asynchronous communications \cite{ram2009asynchronous}. In most of the mentioned works, asymptotic convergence such as convergence in mean (and further the rate in mean) or almost sure convergence, or non-asymptotic properties in expectation, are commonly concerned.

Asymptotic normality and asymptotic efficiency are important topics of stochastic algorithms, which have been studied in SA for a long time. For centralized problem, the asymptotic normality of one-dimensional and multi-dimensional SA was provided in \cite{chung1954stochastic,sacks1958asymptotic}
and \cite{fabian1968asymptotic}, respectively. To archive asymptotic efficiency the so-called adaptive SA may be concerned, see e.g. \cite{ruppert1985newton}, but it requires rather restrictive conditions to guarantee its convergence and optimality. On the other hand, the averaging technique introduced in \cite{polyak1992acceleration} has been widely used. Recently, \cite{duchi2016asymptotic} gave the asymptotic efficiency of the dual average algorithm for solving linear constrained and nonlinear constrained optimization problems respectively. For decentralized problem, however, asymptotic normality and asymptotic efficiency results are rather limited. The asymptotic normality and asymptotic efficiency of a distributed stochastic approximation algorithm were proven in \cite{bianchi2013performance}; a distributed stochastic primal dual algorithm was proposed, and then whose asymptotic normality and asymptotic efficiency were provided in \cite{lei2018asymptotic}. However, all of the above works on distributed optimization are concentrated on unconstrained problems. Inspired by \cite{duchi2016asymptotic,duchi2011dual}, we provide the asymptotic normality of distributed dual averaging algorithm for linear constrained problem.

The dual averaging algorithm was introduced by \cite{nesterov2009primal} in deterministic settings, and further analyzed and developed by many authors. For instance, \cite{xiao2010DA} extended it to stochastic settings and composite optimization problem. \cite{lee2012manifold} proved that the dual averaging algorithm can identify the optimal manifold with a high probability before finding the optimal solution, and provided a strategy to search for the optimal solution in the optimal manifold after identifying the active set. \cite{duchi2016asymptotic} showed that variants of Nesterov's dual averaging algorithm guarantee almost sure finite time identification of active constraints in constrained stochastic optimization problems. The reason why the optimal manifold identification property is so concerned is that it contributes to prove algorithm’s asymptotic normality from a theoretical viewpoint, while it is also helpful to reduce the amount of computation and save storage space of data from a practical viewpoint, especially for sparsity problem.

The dual averaging algorithm was developed to solve distributed optimization problems in \cite{duchi2011dual,agarwal2011distributed}, where it was shown how do the network size and topology influence sharp bounds on convergence rates in \cite{duchi2011dual}, and how do the delays in stochastic gradient information affect the convergence results in \cite{agarwal2011distributed}. Applying the dual averaging algorithm to distributed optimization problems was concerned by many authors. For example, the effects of deterministic and probabilistic massage quantization on distributed dual averaging algorithms for multi-agent optimization problem was considered in \cite{yuan2012distributed}. \cite{hosseini2013online} extended the distributed algorithm based on dual subgradient averaging to the online setting and provided an upper bound on regret as a function of connectivity in the underlying network. Recently, \cite{liang2019distributed} proposed a distributed quasi-monotone sub-gradient algorithm, and proved this algorithm’s asymptotic convergence, where quasi-monotone algorithm introduced in \cite{nesterov2015quasi} is a modification of dual averaging algorithm. However, these works are mostly focused on the non-asymptotic convergence analysis and asymptotic properties such as asymptotic normality have not been resolved for the distributed dual averaging algorithm.

In this paper, we investigate a dual averaging algorithm for the distributed stochastic optimization problem subject to a common constraint set over a time-varying random network. We first establish the almost sure consensus and almost sure convergence of the algorithm. And then in the linear constraint case we provide the almost sure active set identification, and with whose help we are able to analyze the almost sure convergence rate and prove the asymptotic normality as well as asymptotic efficiency of the algorithm. The main contributions of the paper are summarized as follows.
\begin{itemize}
	\item [(a)]	Different from most existing works on distributed dual averaging algorithms that mainly focus on their non-asymptotic properties, we prove all agents' estimates converge to the same optimal solution almost surely for general constrained optimization problem over time-varying random networks.
	In particular, the weight matrices are not restricted to be doubly stochastic, which are only required to be column stochastic in mean sense except for row stochasticity.
	\item [(b)] Motivated by the idea of active set identification, we extend the method in \cite{duchi2016asymptotic} to distributed scenario, and show that the mirror map of the averaged dual sequence identifies the active set of the optimal solution after finite steps almost surely. As explained earlier, once the estimates enter into the optimal manifold, asymptotic convergence properties of the algorithm can be proved as unconstrained stochastic approximation algorithms. On this basis, we provide a novel result on almost convergence rate of the distributed dual averaging algorithm for the case of linear constrained convex distributed stochastic optimization.
	\item[(c)] 	Different from \cite{bianchi2013performance,lei2018asymptotic} that concentrate on unconstrained distributed optimization problem, we provide asymptotic normality and asymptotic efficiency of distributed dual averaging algorithms for linear constrained distributed optimization, which seems to be the first asymptotic normality result for constrained distributed optimization algorithms as far as we know.
\end{itemize}

The remainder of this paper is organized as follows. Section 2 introduces the distributed optimization problem model and a distributed dual averaging (DDA for short) algorithm. Section 3 gives not only the almost sure convergence of DDA algorithm for the convex optimization problem with general constraints, but also the almost sure convergence rate of DDA algorithm in the case objective function is restricted strong convex and constraints are linear. Section 4 proves the asymptotic normality and asymptotic efficiency of DDA algorithm. Section 5 presents a numerical example to justify these theoretic results.


\textbf{Notations and basic definitions}: Throughout this paper, we use the following notation.
$\mathbb{R}^d$ denotes the $d$-dimension Euclidean space with norm $\|\cdot\|$ and $\mathbb{R}^{d}_+:=\{x\in \mathbb{R}^d :x\geq 0\}$.
$\textbf{1}:=(1~1\dots1)^T\in\mathbb{R}^m$, $I_d\in \mathbb{R}^{d\times d}$ denotes the identity matrix and $\textbf{0}$ denotes the zero matrix of compatible dimension, respectively. For a matrix $A$, $A^\dag$ is its Moore-Penrose inverse and $\|A\|=\sup_{\|x\|=1}\|Ax\|$ is the spectral norm. For two matrices  $A$ and $B$,
$A\otimes B$ stands for the Kronecker product. Given a set $\mathcal{X}\subseteq\mathbb{R}^d$,	$1_{\mathcal{X}}$ denotes the characteristic function of set $\mathcal{X}$, which means that it equals $1$ if $x\in\mathcal{X}$, and $0$ otherwie. $\ri(\mathcal{X})$ denotes the set of relative interior of a non-empty convex set $\mathcal{X}$.
For a closed convex set $\mathcal{X}\subseteq\mathbb{R}^d$,  $\mathcal{N}_{\mathcal{X}}(x)$ denotes the normal cone and $P_\mathcal{X}(z)$ denotes the projection operator,  that is,
\begin{equation*}
\mathcal{N}_{\mathcal{X}}(x)\define \{v\in\mathbb{R}^d:\langle v,y-x\rangle\le 0,\forall y\in\mathcal{X}\},\quad P_\mathcal{X}(z)=\arg\min_{x\in\mathcal{X}}\|x-z\|.
\end{equation*}
For a sequence of random vectors $\{\xi_k\}$ and a random vector $\xi$,  $\xi_k\stackrel{\text{a.s.}}{\rightarrow} \xi$ and  $\xi_k\stackrel{d}{\rightarrow} \xi$  stand for  $\{\xi_k\}$ converges to $\xi$ almost surely (a.s. for short) and in distribution, respectively.

\section{Distributed optimization problem and dual averaging method}
Consider the following distributed constrained stochastic optimization problem:
\begin{equation}\label{problem model}
\min ~f(x)=\sum_{j=1}^mf_j(x)\quad \st ~x\in \mathcal{X},
\end{equation}
where $f_j(x)\define\mathbb{E}[F_j(x;\xi_j)], \; j=1,\cdots, m$ with $\xi_j,\, j=1,\cdots, m$ being a random vector defined on a probability space $(\Omega,\mathcal{F},\P)$ with support set $\Xi_j$,  $\mathbb{E}\left[\cdot\right]$  denotes the expected value with respect to
probability measure $\P$ and  $\mathcal{X}\subset\mathbb{R}^d$ is a closed convex  set.

In problem (\ref{problem model}), each agent $j$ shares the common constraint set $\mathcal{X}$ but holds the private information  on objective function $f_j(x)$, such as, the value of sampled function or the corresponding gradient. But each agent can communicate with its immediate neighbors to cooperatively solve the constrained optimization problem (\ref{problem model}). For convenience, denote by $f^*=\inf_{x\in \mathcal{X}} f(x)$ the optimal value of problem (\ref{problem model}), and by $\mathcal{X}^*=\{x\in \mathcal{X}:f(x)=f^*\}$ the optimal solution set.

The network over which the agents communicate at time $k$ is represented by a directed graph $G_k=(V,E_k)$, where $V=\{1,2,\ldots,m\}$ is the node set, and $E_k\subset V\times V$ is associated with the weight matrix $A_k\in\mathbb{R}^{m \times m}$ through
\begin{equation*}
E_k\define\{(j, i): [A_k]_{ij}> 0, i,j\in V\},
\end{equation*}
where $[A_k]_{ij}$ is the $(i,j)$-th entry of matrix $A_k$. At time $k$,
$N_{j,k}\define\{i\in V:(i,j)\in E_k\}$
denotes the neighbors of agent $j$.

The dual averaging  method is proposed by Nesterov \cite{nesterov2009primal}. 
Consider the following optimization problem
\begin{equation*}
\min_{x\in \mathcal{X}} ~h(x),
\end{equation*}
where $h(x):\mathbb{R}^d\to\mathbb{R}$ is a differentiable convex function, $\mathcal{X}\subset\mathbb{R}^d$ is a closed convex set. 
The dual averaging  method
involves two alternate processes:
\begin{align*}
z_k&=z_{k-1}-\alpha_k \nabla h(x_k),\\
x_{k+1}&= \amax_{x\in\mathcal{X}}\{\langle z_k, x\rangle-\psi(x)\},
\end{align*}
where $\psi:\mathcal{X}\to\mathbb{R}$ is called \emph{regularizer}, which is a continuous and strongly convex function on $\mathcal{X}$,  that is, there exists some $\sigma>0$ such that
\begin{equation*}
\psi(\lambda x+(1-\lambda)y)\le\lambda \psi(x)+(1-\lambda)\psi(y)-\frac{\sigma}{2}\lambda(1-\lambda)\|x-y\|^2
\end{equation*}
for all $x,y\in\mathcal{X}$ and $\lambda\in[0,1]$.
For example, the Euclidean regularization is mostly common used in the literature.

Recently, the dual averaging algorithm has been developed to solve distributed optimization problems in \cite{duchi2011dual,yuan2012distributed,agarwal2011distributed,hosseini2013online,liang2019distributed}. In this paper,  we investigate a variant of the distributed dual averaging algorithm proposed in \cite{duchi2011dual} and focus on its asymptotic properties, which reads as the following.
\begin{algorithm}[H]
	\caption{Distributed dual averaging algorithm}
	\label{alg:DDA}
	\textbf{Initialization:} For any $1\leq j\leq m$, agent $j$ initializes its dual variable $z_{j,0}\in\mathbb{R}^d$ (possibly randomly).\\
	\textbf{General step:} At time $k=1,2,\cdots$, update weighted matrix $A_k$ and stepsize $\al_{k}>0$; agent $j$ maintains a pair of vectors $\{x_{j,k},z_{j,k}\}$, exchanges $z_{j,k}$ between agents, and performs the following primal-dual iteration locally.
	\begin{itemize}
		\item[1.] \textbf{Primal step}: Update the primal estimate by a projection defined by $\psi(x)$
		\begin{equation}\label{DDA-primal}
		x_{j,k}=\amax_{x\in\mathcal{X}}\{\langle z_{j,k-1},x\rangle-\psi(x)\}.
		\end{equation}
		\item[2.] \textbf{Dual step:} Draw $\xi_{j, k}\stackrel{i.i.d.}{\sim}\P$, compute $\nabla F_j(x_{j,k};\xi_{j,k})$, update the dual estimate by
		\begin{equation}\label{DDA-dual}
		z_{j,k}=\sum_{i\in N_{j,k}}[A_k]_{ji}z_{i,k-1}-\alpha_{k}\nabla F_j(x_{j,k};\xi_{j,k}).
		\end{equation}
	\end{itemize}
\end{algorithm}
Throughout the paper, we define the filtration
\begin{equation*}
\mathcal{F}_k=\sigma\{z_{j,0},\xi_{j,t},~A_t:j\in V, 1\leq t\leq k-1\},\
\mathcal{F}_1 =\sigma\{z_{j,0},j\in V\}.
\end{equation*}
It is obvious that $z_{j,k-1},x_{j,k}$ is adapted to $\mathcal{F}_{k}$.

\section{Almost sure convergence and convergence rate}
In this section, we study the almost sure convergence of Algorithm \ref{alg:DDA}. We show that each iteration $x_{j,k}$ converges almost surely to the same solution in $\mathcal{X}^*$, for the case where $f_j(\cdot)$ is convex for any $1\le j\le m$. If $f(\cdot)$ is further restricted strong convex, we may provide an estimation of the almost sure convergence rate, which will be used to analyze the asymptotic normality of each estimate $x_{j,k}$ to the optimal solution.

\subsection{Almost sure convergence}

We first introduce the conditions on objective functions, constraint set, network topology, step-size and  sample. 

\begin{ass}[\textbf{objective function}]\label{ass:lip func} For any $1\leq j\leq m$, \\
	(i) $F_j(\cdot;\xi_j)$ is differentiable convex function on $\mathcal{X}$ for any $\xi_j$; \\
	(ii) $F_j(\cdot;\xi_j)$ is Lipschitz continuous on $\mathcal{X}$, that is,
	\begin{equation}\label{zeroth order massage}
	|F_j(x;\xi_j)-F_j(y;\xi_j)|\leq L_{0,j}(\xi_j)\|x-y\|, \quad \forall x,y\in\mathcal{X},
	\end{equation}
	where $L_{0,j}(\xi_j)$ is  measurable and  $\mathbb{E}[L_{0,j}^p(\xi_j)]<\infty$ for some $p\geq2$.
\end{ass}
The Lipschitz continuity of $F_j(\cdot;\xi_j)$ implies that $f_j(\cdot)$ is Lipschitz continuous, and that the gradient $\nabla F_j(x;\xi_j),\nabla f_j(x)$ are bounded by $L_{0,j}(\xi_j)$ and $\mathbb{E}[L_{0,j}(\xi_j)]$, respectively.
For the convergence of Algorithm \ref{alg:DDA}, the condition $p=2$ in part (ii) of Assumption \ref{ass:lip func} is sufficient. When studying the asymptotic normality of the algorithm, $p>2$ is needed to verify Lindeberg's condition. Moreover, for easy of the notation, we denote the observation noise of gradient $\nabla f_j(x_{j,k})$ by
\begin{equation}
s_{j,k}\define \nabla F_j(x_{j,k};\xi_{j,k})-\nabla f_j(x_{j,k}),
\end{equation}
and
\begin{equation}
\label{eq: L0}
L_0=\max_{1\leq j \leq m}\mathbb{E}[L_{0,j}(\xi_j)],\quad
L_0^p=\max_{1\leq j \leq m} \mathbb{E}[L^p_{0,j}(\xi_j)]
\end{equation}
throughout the paper.

We now turn to assumptions on the weight matrices $A_k$, which are commonly assumed to be doubly stochastic in most works (for instance \cite{ram2010distributed,duchi2011dual,lee2013distributed,shah2018distributed}).  However, in practice it is rather easy to implement row-stochasticity ($A_k\textbf{1}=\textbf{1}$) but hard to ensure  column-stochasticity ($\textbf{1}^TA_k=\textbf{1}^T$) since which implies more stringent restrictions on the network. Motivated by \cite[Assumption 1]{bianchi2012convergence}, we investigate Algorithm \ref{alg:DDA} under the relatively weaker conditions.
\begin{ass}[\textbf{weight matrices}]\label{ass:A rho} Let $A_k$ be the  weight matrix at step $k$. Assume that\\
	(i) $A_k$ is a sequences of matrix-valued random variables with nonnegative components and
	\begin{equation*}
	A_k\textbf{1}=\textbf{1}, \quad \textbf{1}^T\mathbb{E}\left[A_k\right]=\textbf{1}^T,   \quad\forall k\geq 1.
	\end{equation*}
	(ii) $\rho_k$ denotes the spectral norm  of matrix $\mathbb{E}\left[A_k^T(I_m-\frac{\textbf{1}\textbf{1}^T}{m})A_k\right]$ and
	\begin{equation}\label{spectral norm}
	\lim_{k\rightarrow\infty}k(1-\rho_k)=\infty.
	\end{equation}
	(iii) Matrix $A_k$ is independent of $\sigma$-algebra $\mathcal{F}_k$.
\end{ass}

Assumption \ref{ass:A rho} allows the broadcast gossip matrices and  (\ref{spectral norm}) holds if $\sup \rho_k<1$.

\begin{ass}[\textbf{step-size}]\label{ass:rho a}(i) $\alpha_{k}>0$ is nonincreasing and $\sum_{k=1}^\infty\alpha_k=\infty$. \\
	(ii) There exists $\beta>0.5$ such that
	\begin{align}
	\lim_{k\rightarrow\infty}k^\beta\alpha_{k}&=0,\label{stepsize}\\
	\liminf_{k\rightarrow\infty}\frac{1-\rho_k}{k^\beta\alpha_{k}}&>0.\label{stepsize and spectral norm}
	\end{align}
\end{ass}

Note that (\ref{stepsize}) implies $\sum_{k=1}^\infty\alpha_k^2<\infty$, which combines with Assumption \ref{ass:rho a}(i) is commonly used in SA. (\ref{stepsize and spectral norm}) means that the exchange of information between agents becomes rare as $k\rightarrow\infty$. When $A_k$ is an independent and identically distributed (i.i.d.) sequence, then $\rho_k\equiv\rho$ is constant, and both (\ref{spectral norm}) and (\ref{stepsize and spectral norm}) hold if and only if $\rho <1$ \cite{bianchi2012convergence}.

\begin{ass}[\textbf{sample and $\sigma$-algebra}]\label{ass:sample} For any $1\leq i, j\leq m$, (i) $\xi_{j,1}, \xi_{j,2}, \cdots$ is i.i.d. sample; (ii) $\xi_{i,k}$ and $\xi_{j,k}$ are conditionally independent given $\mathcal{F}^{'}_k\define\sigma(\mathcal{F}_k\cup\sigma(A_k))$ when $i\neq j$; (iii) $\xi_{j,k}$ is conditionally independent of $A_k$ given $\mathcal{F}_k$.
\end{ass}
Above condition (i) and (ii) guarantee that the sequence of observation noise of gradient
$\{s_{j,k}\}$  is  a martingale difference sequence, that is,
\begin{equation}\label{mds}
\mathbb{E}\left[s_{j,k}\big|\mathcal{F}_k\right]=0,
\end{equation}
and  the conditional covariance
$
\Cov(\nabla F_j(x_{j,k};\xi_{j,k}),\nabla F_i(x_{i,k};\xi_{i,k})\big|\mathcal{F}_k)=0.
$

Combining condition (i) with Assumption \ref{ass:lip func} implies that
\begin{equation}\label{observe noise bound}
\begin{aligned}
\mathbb{E}\left[\|s_{j,k}\|^p\big|\mathcal{F}_k\right]
&\le \left(\left(\mathbb{E}\left[\|\nabla f_j(x_{j,k})\|^p\big|\mathcal{F}_k\right]\right)^{1/p}+\left(\mathbb{E}\left[\|\nabla F_j(x_{j,k};\xi_{j,k})\|^p\big|\mathcal{F}_k\right]\right)^{1/p}\right)^p\\
&\le \left(\left(L_0^p\right)^{1/p}+\left(L_0^p\right)^{1/p}\right)^p= 2^pL_0^p,
\end{aligned}
\end{equation}
where the Minkowski inequality and the fact that $\nabla F_j(x;\xi_j),\nabla f_j(x)$ are bounded by $L_{0,j}(\xi_j)$ and $\mathbb{E}[L^p_{0,j}(\xi_j)]<\infty$ respectively have been involved.

Condition (iii) is similar with \cite[Assumption 1 (c)]{bianchi2012convergence},  which ensures that weight matrix $A_k$ and $\xi_{j,k}$ are  independent conditionally on the past.

For the regularizer $\psi(\cdot)$,  recall the concepts of \emph{mirror map} \cite{zhou2020convergence}
\begin{equation}\label{mirror map}
\q\left(z\right)\define\amax_{x\in\mathcal{X}}\{\langle z,x\rangle-\psi(x)\}
\end{equation}
and  \emph{Fenchel coupling}
\begin{equation*}
R(x,z): =\psi(x)+\psi^*(z)-\langle x,z\rangle,\forall x\in\mathcal{X},z\in\mathbb{R}^d,
\end{equation*}
where $\psi^*(z)\define\sup_{x\in\mathcal{X}}\left\{\langle z,x\rangle-\psi(x)\right\}$ is the conjugate function of $\psi(x)$.
\begin{ass}[\textbf{regularizer $\psi(\cdot)$}]
	\label{ass:Fc}
	For any $x\in\mathcal{X}$, $R(x,z_k)\rightarrow 0$ whenever $\mathrm{\q}(z_k)\rightarrow x$.
\end{ass}

Assumption \ref{ass:Fc} is called ``reciprocity condition'' \cite[Assumption 3]{zhou2020convergence}. Most common regularizers such as the Euclidean and entropic regularizer satisfy this assumption, for details refer to \cite[Examples 2.7 and 2.8]{zhou2020convergence}.

In the next, we study the convergence of sequences $\{x_{j,k}\}$ generated by Algorithm \ref{alg:DDA}.   By definition (\ref{mirror map}), the first step of Algorithm \ref{alg:DDA} can be rewritten as
$
x_{j,k}=\q(z_{j,k-1}).
$
As a key step, we define two auxiliary sequences
\begin{equation}
\begin{aligned}\label{consensus dual averaging alrithm}
\bar{z}_k:=\frac{1}{m}\sum_{j=1}^mz_{j,k},  \quad
\bar{x}_{k+1}:=\q\left(\bar{z}_k\right)
\end{aligned}
\end{equation}
as reference sequences to measure the agent disagreements. It is obvious that $\bar{z}_{k-1}$ and $\bar{x}_k$ are adapted to $\mathcal{F}_{k}$.
By \cite[Lemma 1]{nesterov2009primal},
\begin{equation}\label{consensus}
\|x_{j,k}-\bar{x}_k\|=\|\q\left(z_{j,k-1}\right)-\q\left(\bar{z}_{k-1}\right)\|\le\|z_{j,k-1}-\bar{z}_{k-1}\|/\sigma,
\end{equation}
where $\sigma$ is the strongly convex parameter of $\psi(x)$. 
Then for any $ 1\le j\le m$,  we may study the consensus of $\{x_{j,k}\}$ by showing  $z_{j,k}-\bar{z}_k\to 0$.

\begin{lem}\label{lem:consis}
	Suppose Assumptions \ref{ass:lip func}-\ref{ass:sample} hold. Then, for any $1\leq j\leq m$,
	\begin{itemize}
		\item [\rm{(i)}]\begin{equation}\label{consensus rate}
		\sup_{k}k^{2\beta}\mathbb{E}\left[\|\bar{z}_k-z_{j,k}\|^2\right]<\infty,
		\end{equation}
		where constant $\beta$ is defined in Assumption \ref{ass:rho a}(ii).
		\item [\rm{(ii)}] Furthermore, 	
		\begin{equation}\label{consunsus sum 0}
		\sum_{k=1}^\infty\|\bar{z}_k-z_{j,k}\|^2<\infty~ \text{a.s.}
		\end{equation}
		and for any positive sequence $\{\gamma_k\}$ such that $\sum_{k=1}^\infty\gamma_kk^{-\beta}<\infty,$
		\begin{equation}\label{consunsus sum}
		\sum_{k=1}^\infty\gamma_k\|\bar{z}_k-z_{j,k}\|<\infty~\text{a.s.}
		\end{equation}
		\item [\rm{(iii)}] If Assumption \ref{ass:A rho}(ii) and Assumption \ref{ass:rho a} are replaced by
		\begin{itemize}
			\item[(a)] $A_k,   k=1,2, \cdots $ is i.i.d. and the spectral norm $\rho$ of matrix $\mathbb{E}\left[A_k^T(I_m-\frac{\textbf{1}\textbf{1}^T}{m})A_k\right]$ satisfies $\rho<1$,
			\item[(b)] $\alpha_{k}>0$ is nonincreasing, $\sum_{k=1}^\infty\alpha_k=\infty$, $\sum_{k=1}^\infty\alpha_k^2<\infty$, and $\lim_{k\rightarrow\infty}\frac{\al_{k}}{\al_{k+1}}=1$,
		\end{itemize}
		respectively. Then
		\begin{equation}\label{consensus rate 1}
		\sup_{k}\al_{k}^{-2}\mathbb{E}\left[\|\bar{z}_k-z_{j,k}\|^2\right]<\infty,
		\end{equation}
		and thus (\ref{consunsus sum 0}) holds.
	\end{itemize}	
\end{lem}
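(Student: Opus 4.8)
The plan is to reduce every claim to a single scalar deterministic recursion for the expected squared disagreement. Stack the dual variables as $Z_k=(z_{1,k}^{\top},\dots,z_{m,k}^{\top})^{\top}\in\mathbb{R}^{md}$ and the gradients as $G_k=(\nabla F_1(x_{1,k};\xi_{1,k})^{\top},\dots,\nabla F_m(x_{m,k};\xi_{m,k})^{\top})^{\top}$, so that \eqref{DDA-dual} reads $Z_k=(A_k\otimes I_d)Z_{k-1}-\al_k G_k$. Put $J=\tfrac{\textbf{1}\textbf{1}^{\top}}{m}$, $P=I_m-J$ and $E_k=(P\otimes I_d)Z_k$, for which $\|E_k\|^2=\sum_{j=1}^m\|z_{j,k}-\bar z_k\|^2$. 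First I would derive the error recursion. Because $A_k\textbf{1}=\textbf{1}$ gives $PA_k\textbf{1}=P\textbf{1}=0$, the consensus component $(J\otimes I_d)Z_{k-1}=\textbf{1}\otimes\bar z_{k-1}$ is annihilated, leaving the clean form
\[
E_k=(PA_k\otimes I_d)E_{k-1}-\al_k(P\otimes I_d)G_k .
\]
This cancellation is exactly what lets row-stochasticity $A_k\textbf{1}=\textbf{1}$ decouple the (bounded) disagreement from the (unbounded) running average.

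Next I would expand $\E[\|E_k\|^2\mid\mathcal{F}_k]$ into a contraction term, a forcing term, and a cross term. For the contraction term, using $A_k\perp\mathcal{F}_k$ (Assumption \ref{ass:A rho}(iii)) and $P^{\top}P=P$, one gets $\E[\|(PA_k\otimes I_d)E_{k-1}\|^2\mid\mathcal{F}_k]=E_{k-1}^{\top}(\E[A_k^{\top}PA_k]\otimes I_d)E_{k-1}\le\rho_k\|E_{k-1}\|^2$, since $\E[A_k^{\top}PA_k]$ is symmetric positive semidefinite with spectral norm $\rho_k$. Splitting $G_k=\nabla f(x_k)+S_k$ into its $\mathcal{F}_k$-conditional mean and the martingale noise $S_k$ (stacking the $s_{j,k}$), the noise part of the cross term vanishes: $(PA_k\otimes I_d)E_{k-1}$ is $\mathcal{F}_k'$-measurable and $\E[S_k\mid\mathcal{F}_k']=0$ by \eqref{mds} together with Assumption \ref{ass:sample}(iii), so conditioning on $\mathcal{F}_k'$ first kills it; the forcing is bounded by $C\al_k^2$ via \eqref{observe noise bound}. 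The remaining drift cross term is $-\al_k E_{k-1}^{\top}(\E[A_k]^{\top}P\otimes I_d)\nabla f(x_k)$, and the one extra ingredient I need here is $\|\E[A_k]\|\le1$, which holds because $A_k\textbf{1}=\textbf{1}$ and $\textbf{1}^{\top}\E[A_k]=\textbf{1}^{\top}$ (Assumption \ref{ass:A rho}(i)) make $\E[A_k]$ doubly stochastic; Cauchy--Schwarz and $\|\nabla f(x_k)\|\le\sqrt{m}L_0$ then bound it by $2\al_k\sqrt{m}L_0\|E_{k-1}\|$. Applying Young's inequality to this cross term with weight $\tfrac{1-\rho_k}{2}$ and taking expectations yields, with $a_k=\E[\|E_k\|^2]$,
\[
a_k\le\Big(1-\tfrac{1-\rho_k}{2}\Big)a_{k-1}+\frac{C_1\al_k^2}{1-\rho_k}.
\]

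For part \rm{(i)} I would study $v_k=k^{2\beta}a_k$ with $\eta_k=\tfrac{1-\rho_k}{2}$. Writing $(k/(k-1))^{2\beta}=1+2\beta/k+o(1/k)$ and using $k(1-\rho_k)\to\infty$ from \eqref{spectral norm} — which forces $2\beta/k=o(\eta_k)$ — gives $v_k\le(1-\tfrac12\eta_k)v_{k-1}+c_k$ for large $k$, where $c_k=k^{2\beta}b_k\le\tfrac{C_1}{c}k^{\beta}\al_k$ and the inequality $1-\rho_k\ge c\,k^{\beta}\al_k$ comes from \eqref{stepsize and spectral norm}. The same two facts give $\sum_k\tfrac12\eta_k=\infty$ (since $\eta_k\ge\tfrac1{2k}$ eventually) and $\limsup_k c_k/(\tfrac12\eta_k)<\infty$, so the standard recursion lemma (if $v_k\le(1-\nu_k)v_{k-1}+c_k$ with $\nu_k\in(0,1)$ and $\sum_k\nu_k=\infty$, then $\limsup_k v_k\le\limsup_k c_k/\nu_k$) yields $\sup_k v_k<\infty$, i.e. \eqref{consensus rate}. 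I expect this coordination of the three hypotheses — the spectral decay $k(1-\rho_k)\to\infty$, the step decay $k^{\beta}\al_k\to0$, and the balance condition \eqref{stepsize and spectral norm} — to be the main obstacle, because $\rho_k$ may tend to $1$ and the exact rate $k^{-2\beta}$ only emerges from their interplay.

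Parts \rm{(ii)} and \rm{(iii)} then follow quickly. For \rm{(ii)}, since $2\beta>1$, \eqref{consensus rate} gives $\sum_k\E[\|\bar z_k-z_{j,k}\|^2]\le C\sum_k k^{-2\beta}<\infty$ and, by Jensen, $\sum_k\gamma_k\E[\|\bar z_k-z_{j,k}\|]\le\sqrt{C}\sum_k\gamma_k k^{-\beta}<\infty$; Tonelli's theorem then moves the expectation inside each sum and yields the almost sure finiteness \eqref{consunsus sum 0} and \eqref{consunsus sum}. For \rm{(iii)} the second-moment recursion is derived identically, but now $\rho_k\equiv\rho<1$ is constant, so I would instead examine $v_k=a_k/\al_k^2$, for which $v_k\le(1-\tfrac{1-\rho}{2})(\al_{k-1}/\al_k)^2 v_{k-1}+\tfrac{C_1}{1-\rho}$; since $\al_{k-1}/\al_k\to1$ the factor is eventually $\le1-\tfrac{1-\rho}{4}<1$, giving $\sup_k v_k<\infty$, which is \eqref{consensus rate 1}, and $\sum_k\al_k^2<\infty$ then recovers \eqref{consunsus sum 0} as before.
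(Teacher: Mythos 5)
Your proof is correct, and although its skeleton matches the paper's (stack the duals, project off the consensus component with $J_{\bot}=I_{md}-\tfrac{1}{m}\textbf{1}\textbf{1}^T\otimes I_d$, use row-stochasticity $A_k\textbf{1}=\textbf{1}$ to annihilate the average, contract through $\rho_k$), the middle of the argument takes a genuinely different route. The paper never splits $G_k$ into drift and noise: it bounds the whole cross term pathwise by Cauchy--Schwarz together with $\|A_k\|\le\sqrt{m}$ (row-stochasticity only), which leaves the recursion $u_k\le\rho_k u_{k-1}+M\al_k\sqrt{u_{k-1}}+M\al_k^2$ with a square-root term, and it closes by invoking Lemma 3 of \cite{bianchi2013performance} with $\phi_k=k^{2\beta}$ for part (i) and $\phi_k=\al_k^{-2}$ for part (iii). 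You instead kill the noise part of the cross term by conditioning on $\mathcal{F}_k'$ first --- legitimate under (\ref{mds}) and Assumption \ref{ass:sample}(iii), which indeed give $\mathbb{E}[S_k\mid\mathcal{F}_k']=0$ --- bound the drift part via $\|\mathbb{E}[A_k]\|\le1$ (this uses column-stochasticity in mean from Assumption \ref{ass:A rho}(i), an ingredient the paper's cross-term bound does not need, but which is available under the lemma's hypotheses), and then absorb the drift by Young's inequality with weight $\tfrac{1-\rho_k}{2}$, producing a purely linear recursion $a_k\le(1-\tfrac{1-\rho_k}{2})a_{k-1}+C_1\al_k^2/(1-\rho_k)$ that you close with the elementary comparison lemma ($\limsup_k v_k\le\limsup_k c_k/\nu_k$ when $\sum_k\nu_k=\infty$). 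What each approach buys: yours is self-contained (no appeal to Bianchi et al.'s specialized lemma), avoids the $\sqrt{u_{k-1}}$ term entirely, and makes the role of the balance condition (\ref{stepsize and spectral norm}) transparent through the uniform bound $c_k/\nu_k=\mathcal{O}(1)$; the paper's version is more robust in that it does not exploit the martingale structure of the gradient noise at this stage at all. Your parts (ii) and (iii) coincide with the paper's (monotone convergence plus Jensen's inequality; constant $\rho<1$ with $\al_{k-1}/\al_k\to1$ and the candidate $v_k=a_k/\al_k^2$). Two harmless slips: a factor $2$ is dropped in your displayed drift cross term (your subsequent bound $2\al_k\sqrt{m}L_0\|E_{k-1}\|$ restores it), and, exactly as in the paper's own proof, square-integrability of the initialization $z_{j,0}$ is used implicitly so that the recursion starts from a finite $a_0$.
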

We provide the proof of Lemma \ref{lem:consis} in Appendix \ref{proof:consis}.

Lemma \ref{lem:consis} shows that $ z_{j,k}-\bar{z}_k, \forall 1\leq j\leq m$ converges to zero, which in turn implies the consensus of sequences $\{x_{j,k}\}, j=1,\cdots, m$.  Moreover, it also shows that  $\bar{z}_k-z_{j,k}$ tends to zero in the $2$-nd mean at rate $\mathcal{O}(k^{-2\beta})$ under Assumptions \ref{ass:A rho}-\ref{ass:rho a} and at rate $\mathcal{O}(\al_k^2)$ under stronger conditions, which is the key results for analysing the convergence rate and asymptotic normality of sequences $\{x_{j,k}\}, j=1,\cdots, m$.

\begin{thm}\label{thm:convergence}
	Suppose Assumptions \ref{ass:lip func}-\ref{ass:Fc} hold with $p=2$ in Assumption \ref{ass:lip func}(ii). Then $x_{j,k}, j=1,\cdots, m$ and $\bar{x}_k$ converge  to some point in $\mathcal{X}^*$ almost surely.
\end{thm}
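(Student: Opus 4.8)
The plan is to collapse the $m$ agents onto the averaged reference sequence and then treat the latter as a single perturbed dual-averaging recursion, using the Fenchel coupling as a Lyapunov function in a Robbins--Siegmund / quasi-Fej\'er scheme.

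\emph{Step 1 (reduction to $\bar x_k$ and the averaged dynamics).} By (\ref{consensus}) and Lemma \ref{lem:consis}(ii), $\|x_{j,k}-\bar x_k\|\le\|z_{j,k-1}-\bar z_{k-1}\|/\sigma\to 0$ a.s.\ and $\sum_k\|z_{j,k-1}-\bar z_{k-1}\|^2<\infty$ a.s., so it suffices to prove $\bar x_k=\q(\bar z_{k-1})$ converges a.s.\ to some $x^\infty\in\mathcal X^*$; the agents then share this limit. Averaging the dual step (\ref{DDA-dual}) and using $A_k\mathbf 1=\mathbf 1$ together with $\mathbf 1^T\E[A_k]=\mathbf 1^T$ gives $\bar z_k=\bar z_{k-1}-\alpha_k\bar g_k+e_k$ with $\bar g_k=\frac1m\sum_j\nabla F_j(x_{j,k};\xi_{j,k})$ and $e_k=\frac1m\sum_i([\mathbf 1^TA_k]_i-1)z_{i,k-1}$. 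The decisive point is that row-stochasticity forces $\sum_i([\mathbf 1^TA_k]_i-1)=\mathbf 1^TA_k\mathbf 1-m=0$, so $e_k=\frac1m\sum_i([\mathbf 1^TA_k]_i-1)(z_{i,k-1}-\bar z_{k-1})$ involves only the consensus gaps; since $A_k\perp\mathcal F_k$ it is a martingale increment ($\E[e_k\,|\,\mathcal F_k]=0$) with $\E[\|e_k\|^2\,|\,\mathcal F_k]\lesssim\max_i\|z_{i,k-1}-\bar z_{k-1}\|^2$, which is summable a.s.\ by Lemma \ref{lem:consis}(ii).

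\emph{Step 2 (Lyapunov recursion and an almost-supermartingale).} Fix $x\in\mathcal X^*$ and take $V_k:=R(x,\bar z_{k-1})$. Using $\nabla\psi^*=\q$ and the $1/\sigma$-smoothness of $\psi^*$, the definition of $R$ yields the pathwise bound
\begin{equation*}
R(x,\bar z_k)\le R(x,\bar z_{k-1})-\alpha_k\langle\bar g_k,\bar x_k-x\rangle+\langle\bar x_k-x,e_k\rangle+\tfrac1{2\sigma}\|e_k-\alpha_k\bar g_k\|^2.
\end{equation*}
Writing $\bar x_k-x=(\bar x_k-x_{j,k})+(x_{j,k}-x)$ and $\nabla F_j=\nabla f_j+s_{j,k}$, the convexity inequality $\langle\nabla f_j(x_{j,k}),x_{j,k}-x\rangle\ge f_j(x_{j,k})-f_j(x)$ together with the Lipschitz bound $|f_j(x_{j,k})-f_j(\bar x_k)|\le L_0\|x_{j,k}-\bar x_k\|$ extracts the descent term $-\frac{\alpha_k}{m}(f(\bar x_k)-f^*)\le 0$. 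What remains splits into martingale increments $-\frac{\alpha_k}{m}\sum_j\langle s_{j,k},x_{j,k}-x\rangle$ and $\langle\bar x_k-x,e_k\rangle$, which vanish under $\E[\cdot\,|\,\mathcal F_k]$ by (\ref{mds}) and Step 1, and nonnegative errors controlled up to constants by $\alpha_k\sum_j L_{0,j}(\xi_{j,k})\|z_{j,k-1}-\bar z_{k-1}\|$, $\alpha_k^2\|\bar g_k\|^2$ and $\|e_k\|^2$. All of these are summable a.s.: the first by Cauchy--Schwarz together with $\sum_k\alpha_k^2 L_{0,j}^2(\xi_{j,k})<\infty$ a.s.\ (finite expectation, since $\E[L_{0,j}^2]<\infty$ and $\sum_k\alpha_k^2<\infty$) and $\sum_k\|z_{j,k-1}-\bar z_{k-1}\|^2<\infty$ a.s., the rest because the gradients are bounded by $L_0$; note only bounded --- not Lipschitz --- gradients are used. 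Taking $\E[\cdot\,|\,\mathcal F_k]$ gives
\begin{equation*}
\E[R(x,\bar z_k)\,|\,\mathcal F_k]\le R(x,\bar z_{k-1})-\tfrac{\alpha_k}{m}(f(\bar x_k)-f^*)+\zeta_k,\qquad \textstyle\sum_k\zeta_k<\infty\ \text{a.s.},
\end{equation*}
where both $\zeta_k$ and the descent term are \emph{independent of $x$}. Robbins--Siegmund then shows $R(x,\bar z_k)$ converges a.s.\ and $\sum_k\alpha_k(f(\bar x_k)-f^*)<\infty$ a.s., for each fixed $x\in\mathcal X^*$.

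\emph{Step 3 (a cluster point in $\mathcal X^*$) and Step 4 (full convergence).} Since $\sum_k\alpha_k=\infty$ (Assumption \ref{ass:rho a}) while $\sum_k\alpha_k(f(\bar x_k)-f^*)<\infty$, we get $\liminf_k(f(\bar x_k)-f^*)=0$ a.s.; convergence of $R(x,\bar z_k)$ and the strong-convexity lower bound $R(x,\bar z_{k-1})\ge\frac\sigma2\|\bar x_k-x\|^2$ make $\{\bar x_k\}$ bounded, so a subsequence $\bar x_{k_l}\to x^\infty$ with $f(x^\infty)=f^*$ and, by closedness of $\mathcal X$, $x^\infty\in\mathcal X^*$. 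Because $\zeta_k$ and the descent term are $x$-free, the supermartingale inequality holds simultaneously for all $x\in\mathcal X^*$ on one probability-one event; taking a countable dense $D\subseteq\mathcal X^*$ we obtain an event of full measure on which $R(x,\bar z_k)$ converges for every $x\in D$. The reciprocity condition (Assumption \ref{ass:Fc}) gives $R(x^\infty,\bar z_{k_l-1})\to 0$, and a quasi-Fej\'er (Opial-type) argument then forces $R(x^\infty,\bar z_k)\to 0$ for the whole sequence, whence $\|\bar x_k-x^\infty\|^2\le\frac2\sigma R(x^\infty,\bar z_{k-1})\to 0$; by Step 1, every $x_{j,k}\to x^\infty\in\mathcal X^*$. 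I expect the main obstacle to be this last step: upgrading $\liminf(f(\bar x_k)-f^*)=0$ to convergence of the \emph{whole} sequence toward a single, a priori random, point of a possibly non-singleton $\mathcal X^*$. This is precisely where the $x$-independence of $\zeta_k$ (so that the supermartingale property is available for every anchor at once), the reciprocity condition, and the two-sided control $\frac\sigma2\|\q(z)-x\|^2\le R(x,z)$ must be combined; the secondary technical point is taming the column-stochastic-in-mean noise $e_k$ in Step 1, which hinges on the exact identity $\mathbf 1^TA_k\mathbf 1=m$ from row-stochasticity.
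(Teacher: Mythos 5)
Your proposal is correct and follows essentially the same route as the paper's proof: the Fenchel coupling $R(x,\bar z_k)$ as a Lyapunov function, the Robbins--Siegmund theorem yielding a.s.\ convergence of $R$ and summability of $\alpha_k\bigl(f(\bar x_k)-f^*\bigr)$, a subsequential limit in $\mathcal{X}^*$ via $\sum_k\alpha_k=\infty$, the reciprocity condition (Assumption \ref{ass:Fc}) to force $R\to 0$ along the whole sequence, and (\ref{consensus}) together with Lemma \ref{lem:consis} to transfer the limit to each $x_{j,k}$ --- your isolation of the perturbation $e_k$ as a conditionally centered increment is exactly the paper's identity (\ref{col-stoc-in-mean}) in different packaging. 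If anything, your Step 4 is more careful than the paper, which simply re-fixes the random anchor $x^*=\check{x}$ in a supermartingale argument established for deterministic anchors; the $x$-independence of the error terms and the countable-dense-set device you invoke are the standard patch for precisely that point.
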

\begin{proof}
	For any  fixed $x^*\in\mathcal{X}^*$, denote $R_k:=R(x^*,\bar{z}_k)\ge0$. By \cite[Lemma 3.2 (3.2b)]{zhou2020convergence},
	\begin{equation*}
	\begin{aligned}
	R_{k}
	&\le R_{k-1}
	+\big\langle\q(\bar{z}_{k-1})-x^*,\bar{z}_k-\bar{z}_{k-1}\big\rangle
	+\frac{1}{2\sigma}\|\bar{z}_k-\bar{z}_{k-1}\|^2\\
	&= R_{k-1}
	+\big\langle \bar{x}_k-x^*,\bar{z}_k-\bar{z}_{k-1}\big\rangle
	+\frac{1}{2\sigma}\|\bar{z}_k-\bar{z}_{k-1}\|^2,
	\end{aligned}
	\end{equation*}
	where $\sigma$ is the strongly convex parameter of the regularizer $\psi(x)$.	
	Note that $\bar{z}_{k-1}$ is adapted to $\mathcal{F}_{k}$, we have by taking conditional expectation on both sides of the above inequality with respect to $\mathcal{F}_k$ that
	\begin{equation} \label{Lyapunov function}
	\mathbb{E}\left[R_{k}|\mathcal{F}_k\right]\leq R_{k-1}
	+\mathbb{E}\left[\big\langle \bar{x}_k-x^*,\bar{z}_k-\bar{z}_{k-1}\big\rangle|\mathcal{F}_k\right]
	+\frac{1}{2\sigma}\mathbb{E}\left[\|\bar{z}_k-\bar{z}_{k-1}\|^2|\mathcal{F}_k\right].
	\end{equation}
	
	Firstly, we  focus on the second term $\mathbb{E}\left[\big\langle \bar{x}_k-x^*,\bar{z}_k-\bar{z}_{k-1}\big\rangle|\mathcal{F}_k\right]$ on the right-hand side of (\ref{Lyapunov function}).
	By definitions,  $\bar{x}_k,x_{j,k}$ and $z_{j,k-1}$ are adapted to $\mathcal{F}_{k}$ and then
	\begin{equation*}
	\begin{aligned}
	&\mathbb{E}\left[\big\langle \bar{x}_k-x^*,\bar{z}_k-\bar{z}_{k-1}\big\rangle|\mathcal{F}_k\right]
	=\big\langle \bar{x}_k-x^*,\mathbb{E}\left[\bar{z}_k-\bar{z}_{k-1}|\mathcal{F}_k\right]\big\rangle\\
	=&\left\langle \bar{x}_k-x^*,\mathbb{E}\left[\left.\frac{1}{m}\sum_{j=1}^m\left(\sum_{i=1}^m[A_k]_{ij}-1\right)z_{j,k-1}-\frac{\alpha_{k}}{m}\sum_{j=1}^m\nabla F_j(x_{j,k};\xi_{j,k})\right|\mathcal{F}_k \right]\right\rangle\\
	=&\left\langle \bar{x}_k-x^*,\frac{1}{m}\sum_{j=1}^m\mathbb{E}\left[\left.\sum_{i=1}^m[A_k]_{ij}-1\right|\mathcal{F}_k\right]z_{j,k-1}-\frac{\alpha_{k}}{m}\sum_{j=1}^m\nabla f_j(x_{j,k})\right\rangle\\
	=&\left\langle \bar{x}_k-x^*,-\frac{\alpha_{k}}{m}\sum_{j=1}^m\nabla f_j(x_{j,k})\right\rangle,
	\end{aligned}
	\end{equation*}
	where the second equality follows from the definitions of $\bar z_k$ in (\ref{consensus dual averaging alrithm}) and $z_{j,k}$ in (\ref{DDA-dual}), and the last equality follows from the fact that
	\begin{equation}\label{col-stoc-in-mean}
	\mathbb{E}\left[\sum_{i=1}^m[A_k]_{ij}\bigg|\mathcal{F}_k\right]=\mathbb{E}\left[\sum_{i=1}^m[A_k]_{ij}\right]=1,\ 1\leq j\leq m,
	\end{equation}
	see Assumption \ref{ass:A rho}(i) and \ref{ass:A rho}(iii) for details.
	Moreover,
	\begin{equation*}
	\begin{aligned}
	\langle \bar{x}_k-x^*,-\nabla f_j(x_{j,k})\rangle
	=&\langle\nabla f_j(x_{j,k}),x^*-x_{j,k}\rangle+\langle\nabla f_j(x_{j,k}),x_{j,k}-\bar{x}_k\rangle\\
	\leq&f_j(x^*)-f_j(x_{j,k})+\|\nabla f_j(x_{j,k})\|\|x_{j,k}-\bar{x}_k\|\\
	\leq&f_j(x^*)-f_j(\bar{x}_k)+f_j(\bar{x}_k)-f_j(x_{j,k})+L_0\|x_{j,k}-\bar{x}_k\|\\
	\leq&f_j(x^*)-f_j(\bar{x}_k)+2L_0\|x_{j,k}-\bar{x}_k\|\\
	\leq&f_j(x^*)-f_j(\bar{x}_k)+{2L_0}\|z_{j,k-1}-\bar{z}_{k-1}\|/{\sigma},
	\end{aligned}
	\end{equation*}
	where $L_0$ is defined in (\ref{eq: L0}),
	the first inequality follows from the convexity of $f_j(\cdot)$ and the
	Cauchy-Schwarz inequality, the second and the third inequalities follow from the Lipsthitz condition (ii) of Assumption \ref{ass:lip func}
	and the  last inequality follows from (\ref{consensus}).	
	Consequently,
	\begin{equation}\label{second term of RH}
	\mathbb{E}\left[\big\langle \bar{x}_k-x^*,\bar{z}_k-\bar{z}_{k-1}\big\rangle\big|\mathcal{F}_k\right]\leq \frac{\alpha_{k}}{m}\big(f^*-f(\bar{x}_k)\big)+\frac{2\alpha_{k}L_0}{m\sigma}\sum_{j=1}^m\|z_{j,k-1}-\bar{z}_{k-1}\|.
	\end{equation}
	
	Next, we focus on the third term  $\frac{1}{2\sigma}\mathbb{E}\left[\|\bar{z}_k-\bar{z}_{k-1}\|^2\big|\mathcal{F}_k\right]$ on the right-hand side of (\ref{Lyapunov function}).
	\begin{equation}
	\begin{aligned}\label{third term of RH}
	&\frac{1}{2\sigma}\mathbb{E}\left[\|\bar{z}_k-\bar{z}_{k-1}\|^2|\mathcal{F}_k\right]\\
	=&\frac{1}{2\sigma}\mathbb{E}\left[\left\|\sum_{j=1}^m\frac{\sum_{i=1}^m[A_k]_{ij}}{m}(z_{j,k-1}-\bar{z}_{k-1})-\frac{\alpha_{k}}{m}\sum_{j=1}^m\nabla F_j(x_{j,k};\xi_{j,k})\right\|^2\bigg|\mathcal{F}_k\right]\\
	\leq& \frac{1}{\sigma }\mathbb{E}\left[\left\|\sum_{j=1}^m\frac{\sum_{i=1}^m[A_k]_{ij}}{m}\left(z_{j,k-1}-\bar{z}_{k-1}\right)\right\|^2\bigg|\mathcal{F}_k\right]+\frac{1}{\sigma}\mathbb{E}\left[\left\|\frac{\alpha_{k}}{m}\sum_{j=1}^m\nabla F_j(x_{j,k};\xi_{j,k})\right\|^2\bigg|\mathcal{F}_k\right]\\
	\leq& \frac{1}{\sigma }\sum_{j=1}^m\mathbb{E}\left[\frac{\sum_{i=1}^m[A_k]_{ij}}{m}\bigg|\mathcal{F}_k\right]\|\bar{z}_{k-1}-z_{j,k-1}\|^2+\frac{\alpha_{k}^2}{m\sigma}\sum_{j=1}^m\mathbb{E}\left[\|\nabla F_j(x_{j,k};\xi_{j,k})\|^2|\mathcal{F}_k\right]\\
	\leq& \frac{1}{m\sigma }\sum_{j=1}^m\|\bar{z}_{k-1}-z_{j,k-1}\|^2+\frac{L_0^2}{\sigma}\alpha_{k}^2,
	\end{aligned}
	\end{equation}
	where $L_0^2$ is defined in (\ref{eq: L0}),  the second inequality follows from the convexity of $\|\cdot\|^2$ and the fact that
	\begin{equation*}
	[A_k]_{ij}\geq0, \quad \sum_{j=1}^m\frac{\sum_{i=1}^m[A_k]_{ij}}{m}=1,
	\end{equation*}
	the last inequality follows from (\ref{col-stoc-in-mean}) 
	and the Lipsthitz condition  (ii) of Assumption \ref{ass:lip func}.

	Combining (\ref{Lyapunov function}), (\ref{second term of RH}) and (\ref{third term of RH}), it follows that
	{\small\begin{equation}\label{Lyapunov function 1}
		\begin{aligned}
		\mathbb{E}\left[R_{k}|\mathcal{F}_{k}\right]
		\leq& R_{k-1}-\frac{\alpha_k}{m}\left(f(\bar{x}_k)-f^*\right)+\frac{2L_0\alpha_{k}}{m\sigma}\sum_{j=1}^m\|z_{j,k-1}-\bar{z}_{k-1}\|+\frac{1}{m\sigma }\sum_{j=1}^m\|\bar{z}_{k-1}-z_{j,k-1}\|^2+\frac{L_0^2}{\sigma}\alpha_{k}^2.
		\end{aligned}
		\end{equation}}
	
	In what follows, we employ the supermartingale convergence theorem of Robbins and Siegmund (Lemma \ref{lem:Robbins-Siegmund} in Appendix \ref{sec:SA}) to study  the convergence of $R_k$.  For the consistency of the notations,  denote
	\begin{equation*}
	v_k\define R_k,  \quad a_k\define0, \quad \phi_k\define\frac{\alpha_k}{m}\left(f(\bar{x}_k)-f^*\right)
	\end{equation*}
	and
	\begin{equation*}
	b_k\define\frac{2\alpha_{k}}{m\sigma}\sum_{j=1}^mL_0\|z_{j,k-1}-\bar{z}_{k-1}\|\\
	+\frac{1}{m\sigma }\sum_{j=1}^m\|\bar{z}_{k-1}-z_{j,k-1}\|^2+\frac{L_0^2}{\sigma}\alpha_{k}^2.
	\end{equation*}
	Obviously, $v_k, a_k, b_k, \phi_k$ are nonnegative sequence  and adapted to $\mathcal{F}_k$.
	Note that
	\begin{equation*}
	\begin{aligned}
	\sum_{k=1}^\infty\al_k\|z_{j,k-1}-\bar{z}_{k-1}\|&=\al_1\|z_{j,0}-\bar{z}_{0}\|+\sum_{k=1}^\infty\al_{k+1}\|z_{j,k}-\bar{z}_{k}\|\\
	&\le \al_1\|z_{j,0}-\bar{z}_{0}\|+\sum_{k=1}^\infty\al_{k}\|z_{j,k}-\bar{z}_{k}\|<\infty ~\text{a.s.},
	\end{aligned}
	\end{equation*}
	where the inequality follows from  the  step-size $\alpha_k$ in nonincreasing by  Assumption \ref{ass:rho a} and the summability  follows from (\ref{consunsus sum}). Then by combining this with Assumption \ref{ass:rho a} and (\ref{consunsus sum 0}), we know that
	$
	\sum_{k=1}^\infty b_k<\infty,
	$
	and hence the conditions of Lemma \ref{lem:Robbins-Siegmund} hold.
	By applying the lemma,  we have that for any $x^*\in\mathcal{X}^*$, $R_k$ converges to a finite random variable $R_{\infty}$ almost surely and
	\begin{equation}\label{boundness}
	\sum_{k=1}^\infty\alpha_k\left(f(\bar{x}_k)-f^*\right)<\infty\quad \text{a.s.}
	\end{equation}
	By  \cite[Lemma 3.2 (a)]{zhou2020convergence},
	\begin{equation}\label{Fc bound}
	\|\bar{x}_k-x^*\|^2=\|\q(\bar{z}_{k-1})-x^*\|^2\le \dfrac{2}{\sigma}R_{k-1}
	\end{equation}
	and then $\{\bar{x}_k\}$ is bounded almost surely. In addition, according to (\ref{boundness}) and condition (i) of Assumption \ref{ass:rho a},
	\begin{equation*}
	\liminf_{k\rightarrow\infty}f(\bar{x}_k)-f^*=0\quad \text{a.s.}
	\end{equation*}
	Consider a subsequence $\{\bar{x}_{k_t}\}$ such that $\lim_{t\rightarrow\infty}f(\bar{x}_{k_t})=f^*$  and  denote $\check{x}$ as the limit point of $\{\bar{x}_{k_t}\}$. Since $f$ is continuous, we must have $f(\check{x})=f^*$, and hence $\check{x}\in\mathcal{X}^*$. 
	Fixing $x^*=\check{x}$ in the definition of $R_k$.  By Assumption \ref{ass:Fc}, we see that for any subsequence of $\{\bar{x}_{k_t}\}$ that converges to $\check{x}$,  the corresponding subsequence of $R_{k_t-1}$ must converges to $0$ almost surely, and thus $R_\infty$ equals to $0$ almost surely.
	Consequently, (\ref{Fc bound}) implies $\bar{x}_k\to\check{x}$  almost surely.  Note also that  for any $1\leq j\leq m$,
	\begin{align*}
	\|x_{j,k}-\check{x}\|\leq \|x_{j,k}-\bar{x}_k\|+\|\bar{x}_k-\check{x}\|
	\leq \frac{1}{\sigma} \|z_{j,k-1}-\bar{z}_{k-1}\|+\|\bar{x}_k-\check{x}\|,
	\end{align*}
	where the second inequality follows from  (\ref{consensus}). Then  $x_{j,k}\rightarrow \check{x}$   almost surely  as   $z_{j,k}\to \bar{z}_{k}$ and  $\bar{x}_k\to\check{x}$   almost surely.
	The proof is completed.
\end{proof}

A DDA algorithm is proposed by Duchi et al. \cite{duchi2011dual} where the convergence rate of gap between the functional value of local average and the optimal values have been established. In comparison, Theorem \ref{thm:convergence} establishes the almost sure convergence of the solutions
$x_{j,k}, j=1,\cdots, m$ and $\bar{x}_k$ generated by DDA algorithm \ref{alg:DDA}.

\subsection{Almost sure convergence rate}

Let $x^*$ be  the limit point of sequence $\{\bar x_{k}\}$ in Theorem  \ref{thm:convergence}.
In this subsection,  we  study the  convergence rate of  $\|\bar{x}_k-x^*\|$ to zero.
Hereafter,  we consider the case that the constraint set $\mathcal{X} $ in problem (\ref{problem model}) is defined by linear inequalities,
\begin{equation*}
\mathcal{X}=\{x\in\mathbb{R}^d: Bx-b\leq 0,~Cx-c\leq 0\}
\end{equation*}
and the  regularizer in (\ref{consensus dual averaging alrithm}) is $\psi(x)=\dfrac{1}{2}\|x\|^2$, where
$B\in\mathbb{R}^{d_1\times d}$, $b\in\mathbb{R}^{d_1}$,  $C\in\mathbb{R}^{d_2\times d}$ and $c\in\mathbb{R}^{d_2}$.
For simplicity, we assume that  $Bx^*-b= 0$,  $Cx^*-c< 0$, that is, $Bx-b\leq0$ is the active constraint on $x^*$ while the other is inactive, and
denote
\begin{equation}
\label{eq: Y}
\mathcal{Y}=\{x:Bx=0\}, \quad   U=(u_1,u_2,\cdots,u_d)\in\mathbb{R}^{d\times d},
\end{equation}
where $\mathcal{Y}$  is
a $r$-dimension subspace of $\mathbb{R}^d$, $u_1,u_2,\cdots,u_r$ and $u_{r+1},u_{r+1} ,\cdots,u_d$ are the standard orthogonal basis of $\mathcal{Y}$ and its orthogonal subspace respectively. Moreover,
the two auxiliary sequences defined in  (\ref{consensus dual averaging alrithm}) read as follows:
\begin{equation}\label{consensus dual averaging alrithm 1}
\bar{z}_k=\frac{1}{m}\sum_{j=1}^mz_{j,k},\quad
\bar{x}_{k+1}=\amin_{x\in\mathcal{X}}\{\left\langle-\bar{z}_k,x\right\rangle+\tfrac{1}{2}\|x\|^2\}.
\end{equation}

The following assumptions are needed.

\begin{ass}[\textbf{strengthened Assumption \ref{ass:lip func}}]\label{ass:str conv} (i) Assumption \ref{ass:lip func} holds. \\
	(ii) For any $1\leq j\leq m$, there exists a constant $L>0$ such that
	\begin{equation}\label{first order massage}
	\|\nabla f_j(x)-\nabla f_j(y)\|\leq L\|x-y\|,\forall x,y\in\mathcal{X}.
	\end{equation}
	There exist constants $c_0,\epsilon \in(0,\infty)$ such that for $x\in\mathcal{X}\cap\{x:\|x-x^*\|\leq\epsilon\}$,
	\begin{equation}\label{second order massage}
	\|\nabla f(x)-\nabla f(x^*)-\nabla^2 f(x^*)(x-x^*)\|\leq c_0\|x-x^*\|^2.
	\end{equation}
	(iii)There exists $\mu>0$ such that for any $x$ in the critical tangent cone $ \mathcal{T}_{\mathcal{X}}(x^*)$,
	\begin{equation}\label{res str conv}
	x^T\nabla^2 f(x^*)x\geq \mu\|x\|^2.
	\end{equation}
\end{ass}
Assumption \ref{ass:str conv}(iii) is the standard second-order sufficiency (or restricted strong convexity) condition \cite{wright1993identifiable}, which guarantees the uniqueness of minimizer of function $f(\cdot)$ over $\mathcal{X}$. Moreover, it implies that  \cite[Theorem 3.2(i)]{wright1993identifiable}: there exists $\epsilon'>0$ such that
\begin{equation}\label{restricted-strong-convexity}
\langle\nabla f(x),x-x^*\rangle\ge f(x)-f(x^*)\ge \epsilon' \min \left\{\|x-x^*\|^2,\|x-x^*\|\right\}  \quad   \forall x\in\mathcal{X}.
\end{equation}

\begin{ass}[\textbf{constraint qualification}]\label{ass:opt cond}\cite[Assumption B]{duchi2016asymptotic}  
	The vector $\nabla f(x^*)$ satisfies
	\begin{equation}\label{nondegeneracy condition}
	-\nabla f(x^*)\in\ri \mathcal{N}_{\mathcal{X}}(x^*),
	\end{equation}
	where $\ri \mathcal{N}_{\mathcal{X}}(x^*)$ is the relative interior of  normal cone  $\mathcal{N}_{\mathcal{X}}(x^*)$.
\end{ass}
The nondegeneracy condition (\ref{nondegeneracy condition})  is common  in manifold identification analysis \cite{lee2012manifold, duchi2016asymptotic}.
As we assumed that  $Bx^*=b$ and $Cx^*<c$,  the   norm cone in Assumption
\ref{ass:opt cond}   and critical tangent cone in Assumption \ref{ass:str conv} are
\begin{equation*}
\begin{aligned}
&\mathcal{N}_{\mathcal{X}}(x^*)=\{y:B^T\lambda=y, \lambda\in\mathbb{R}^{d_1}_+\},  \quad   \mathcal{T}_{\mathcal{X}}(x^*)=\{x:Bx=0\}.
\end{aligned}
\end{equation*}

We need stronger assumptions on weight matrix $A_k$ and step-size $\al_k$.

\begin{ass}[\textbf{stronger conditions on  weight matrix}]\label{ass:A&rho&a}
	(i)  $A_k, k=1,2, \cdots$ is doubly stochastic matrix with nonnegative components;
	(ii) $A_k,   k=1,2, \cdots $ is i.i.d. and the spectral norm $\rho$ of matrix $\mathbb{E}\left[A_k^T(I_m-\frac{\textbf{1}\textbf{1}^T}{m})A_k\right]$ satisfies $\rho<1$; (iii) Assumption \ref{ass:A rho} \rm{(iii)} holds.
\end{ass}
\begin{ass}[\textbf{stronger conditions on  step-size}]\label{ass:a} 
	The	step-size $\al_k=\tfrac{a}{k^\al}$ with $\al\in(\tfrac{2}{3},1),a>0$.
\end{ass}

The following lemma studies the active set identification of dual averaging algorithm \ref{alg:DDA}, which is an extension of \cite[Theorem 3]{duchi2016asymptotic} to distributed optimization setting.
\begin{lem}\label{lem:con iden}
	Suppose Assumptions \ref{ass:sample}, \ref{ass:str conv}-\ref{ass:a} hold. Then with probability one,
	there exists some (random) $K<\infty$ such that when $k\geq K$,
	\begin{equation*}
	B\bar{x}_k=b,\quad C\bar{x}_k<c.
	\end{equation*}
\end{lem}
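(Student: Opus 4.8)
The plan is to show that the averaged dual variable $\bar z_{k-1}$ eventually lands in the region of $\mathbb{R}^d$ whose Euclidean projection onto $\mathcal{X}$ lies in the relative interior of the optimal face $F=\{x\in\mathcal{X}:Bx=b\}$, and then to use the already-established convergence $\bar x_k\to x^*$ to control the inactive constraints. Throughout I write $S_k:=\sum_{t=1}^k\alpha_t$ and decompose $\mathbb{R}^d=\mathcal{Y}\oplus\mathcal{Y}^\perp$ with $\mathcal{Y}=\{x:Bx=0\}$ as in (\ref{eq: Y}), recording that $\mathcal{Y}^\perp=\mathrm{range}(B^T)$, that $\cone(B^T):=\{B^T\lambda:\lambda\in\mathbb{R}^{d_1}_+\}=\mathcal{N}_{\mathcal{X}}(x^*)$, and that $\aff\mathcal{N}_{\mathcal{X}}(x^*)=\mathcal{Y}^\perp$.

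First I would record two preliminaries. Since Assumption \ref{ass:str conv}(iii) forces $\mathcal{X}^*=\{x^*\}$, Theorem \ref{thm:convergence} (its hypotheses follow from the strengthened assumptions, the Euclidean regularizer satisfying Assumption \ref{ass:Fc}) gives $x_{j,k}\to x^*$ and $\bar x_k\to x^*$ almost surely; because $Cx^*<c$ strictly, this already yields $C\bar x_k<c$ for all large $k$. Next, the exact double stochasticity in Assumption \ref{ass:A&rho&a}(i) makes the averaging step telescope: averaging (\ref{DDA-dual}) over $j$ gives $\bar z_k=\bar z_{k-1}-\tfrac{\alpha_k}{m}\sum_j\nabla F_j(x_{j,k};\xi_{j,k})$, hence $\bar z_k-x^*=(\bar z_0-x^*)-\tfrac1m\sum_{t=1}^k\alpha_t\sum_j\nabla F_j(x_{j,t};\xi_{j,t})$.

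The core step is to analyze the normal component $n_k:=P_{\mathcal{Y}^\perp}(\bar z_k-x^*)$. Writing $\nabla F_j(x_{j,t};\xi_{j,t})=\nabla f_j(x^*)+(\nabla f_j(x_{j,t})-\nabla f_j(x^*))+s_{j,t}$ and using $\sum_j\nabla f_j(x^*)=\nabla f(x^*)$, I would split $\bar z_k-x^*$ into the bounded initial term, the drift $-\tfrac{S_k}{m}\nabla f(x^*)$, a gradient-bias term bounded by $\tfrac{L}{m}\sum_t\alpha_t\sum_j\|x_{j,t}-x^*\|$ via the Lipschitz gradient (\ref{first order massage}), and the martingale $M_k=\tfrac1m\sum_t\alpha_t\sum_j s_{j,t}$. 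Since $\{s_{j,t}\}$ is a martingale difference sequence with $\mathbb{E}[\|s_{j,t}\|^2\mid\mathcal{F}_t]\le4L_0^2$ by (\ref{observe noise bound}), conditionally uncorrelated across $j$ by Assumption \ref{ass:sample}, and $\sum_t\alpha_t^2<\infty$ under Assumption \ref{ass:a}, $M_k$ is an $L^2$-bounded martingale, so $M_k$ converges a.s. and $\|M_k\|$ is bounded; and since $\|x_{j,t}-x^*\|\to0$ while $S_k\to\infty$, the Ces\`aro (Toeplitz) lemma makes the bias term $o(S_k)$. Dividing by $S_k$ and projecting onto $\mathcal{Y}^\perp$ (which fixes $-\nabla f(x^*)=B^T\lambda^*\in\mathcal{Y}^\perp$) yields $n_k/S_k\to-\tfrac1m\nabla f(x^*)$. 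By Assumption \ref{ass:opt cond}, $-\nabla f(x^*)\in\ri\mathcal{N}_{\mathcal{X}}(x^*)$, and since the relative interior of a cone is invariant under positive scaling, this limit lies in $\ri\mathcal{N}_{\mathcal{X}}(x^*)$; as $n_k/S_k\in\mathcal{Y}^\perp=\aff\mathcal{N}_{\mathcal{X}}(x^*)$ converges to a relatively interior point of the cone, it eventually enters the relatively open set $\ri\mathcal{N}_{\mathcal{X}}(x^*)$, whence $n_k=S_k\,(n_k/S_k)\in\mathcal{N}_{\mathcal{X}}(x^*)$ for all large $k$.

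Finally I would combine the two pieces. For large $k$ the constraints active at $\bar x_k=P_{\mathcal{X}}(\bar z_{k-1})$ lie among the rows of $B$, so $\bar z_{k-1}-\bar x_k\in\mathcal{N}_{\mathcal{X}}(\bar x_k)\subseteq\cone(B^T)\subseteq\mathcal{Y}^\perp$; applying $P_{\mathcal{Y}}$ then gives $P_{\mathcal{Y}}(\bar z_{k-1}-x^*)=P_{\mathcal{Y}}(\bar x_k-x^*)\to0$. Hence the affine-face projection $\hat x_k:=x^*+P_{\mathcal{Y}}(\bar z_{k-1}-x^*)$ satisfies $\hat x_k\to x^*$, so $B\hat x_k=b$ and $C\hat x_k<c$ for large $k$, i.e.\ $\hat x_k\in\ri F$ and $\mathcal{N}_{\mathcal{X}}(\hat x_k)=\mathcal{N}_{\mathcal{X}}(x^*)$. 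Since $\bar z_{k-1}-\hat x_k=P_{\mathcal{Y}^\perp}(\bar z_{k-1}-x^*)=n_{k-1}\in\mathcal{N}_{\mathcal{X}}(x^*)=\mathcal{N}_{\mathcal{X}}(\hat x_k)$, the projection optimality condition forces $\hat x_k=P_{\mathcal{X}}(\bar z_{k-1})=\bar x_k$, so $B\bar x_k=b$ and $C\bar x_k<c$ for all $k\ge K$ with $K<\infty$ random. I expect the main obstacle to be the core step: cleanly isolating the deterministic drift from the stochastic and consensus errors and, crucially, using the \emph{relative-interior} form of the nondegeneracy condition rather than mere membership in $\mathcal{N}_{\mathcal{X}}(x^*)$, so that the vanishing perturbations cannot push $n_k$ out of the cone; redundant active rows make a naive ``all multipliers positive'' argument fail, which is exactly why the projection-onto-the-face formulation combined with $\bar x_k\to x^*$ is the right device.
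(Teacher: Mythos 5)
Your proof is correct, and while it rests on the same two pillars as the paper's argument --- showing that the normalized averaged dual iterate $(\bar z_k-x^*)/\sum_{t\le k}\alpha_t$ converges almost surely to $-\frac{1}{m}\nabla f(x^*)$, and then exploiting the relative-interior nondegeneracy of Assumption \ref{ass:opt cond} --- its execution differs from the paper's in two genuine ways. First, the paper rewrites $\bar x_{k+1}$ as the minimizer of a perturbed linearized problem, extracts KKT multipliers, and, once $v_k=-m\bar z_k/\widetilde\alpha_k-\nabla f(x^*)\to 0$ is established, \emph{cites} \cite[part 12.1]{duchi2016asymptotic} for the identification step; you instead prove identification from scratch via the projection characterization $x=P_\mathcal{X}(z)\iff x\in\mathcal{X},\ z-x\in\mathcal{N}_\mathcal{X}(x)$, introducing the face point $\hat x_k=x^*+P_\mathcal{Y}(\bar z_{k-1}-x^*)$ and checking $n_{k-1}\in\mathcal{N}_\mathcal{X}(x^*)=\mathcal{N}_\mathcal{X}(\hat x_k)$, which makes the geometric mechanism explicit --- in particular the crucial observation that $n_k/S_k$ is \emph{confined} to $\aff\mathcal{N}_\mathcal{X}(x^*)=\mathcal{Y}^\perp$, so convergence to a point of $\ri\mathcal{N}_\mathcal{X}(x^*)$ (a relatively open set) forces eventual membership; your closing remark that mere membership in the cone, or a naive all-multipliers-positive argument, would fail under redundant active rows is exactly the point the cited reference handles. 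Second, for the dual-average limit the paper uses convexity-of-$\|\cdot\|^2$ estimates together with the a.s.\ summability $\sum_t\alpha_t\sum_j\|z_{j,t-1}-\bar z_{t-1}\|^2<\infty$ from Lemma \ref{lem:consis}, the bound $\sum_k\alpha_k\|\bar x_k-x^*\|^2<\infty$ obtained from (\ref{boundness}) via \cite[Lemma 9.5]{duchi2016asymptotic}, and a Kronecker-lemma-plus-martingale-series argument; you replace all of this by the Toeplitz/Ces\`aro lemma applied to $\|x_{j,t}-x^*\|\to 0$ (available from Theorem \ref{thm:convergence}, whose hypotheses you correctly verify are implied by Assumptions \ref{ass:str conv}--\ref{ass:a}) together with a.s.\ convergence of the $L^2$-bounded martingale $M_k$. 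Your route is shorter and more elementary; the paper's buys quantitative summability estimates that it reuses elsewhere (e.g.\ in the proof of Lemma \ref{lem:asym effi}), whereas yours leans only on the qualitative a.s.\ convergence. All the supporting details check out: double stochasticity indeed telescopes $\bar z_k$, $\mathcal{N}_\mathcal{X}(x^*)=\{B^T\lambda:\lambda\in\mathbb{R}^{d_1}_+\}$ spans $\mathcal{Y}^\perp$ so $P_{\mathcal{Y}^\perp}$ fixes $-\nabla f(x^*)$, the relative interior of a cone is invariant under positive scaling, and your use of $C\bar x_k<c$ for large $k$ (from continuity and $\bar x_k\to x^*$) to restrict the active set at $\bar x_k$ to rows of $B$ involves no circularity.
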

The proof is presented in Appendix \ref{Proof:con iden}.

Define
\begin{equation}
\label{eq:pb}
P_B:=I_d-B^T(BB^T)^{\dagger} B
\end{equation}
as the projection operator onto subspace $ \mathcal{Y}$ (\ref{eq: Y}) and
\begin{equation}
\label{eq:h}
H\define \dfrac{1}{m}P_B\nabla^2f(x^*)P_B.
\end{equation}
Lemma \ref{lem:con iden} implies
\begin{equation*}
P_B(\bar{x}_k-x^*)=\bar{x}_k-x^*\quad \text{a.s.},
\end{equation*}
when $k$ is large enough. Therefore, we may study the convergence rate of $\|\bar{x}_k-x^*\|$ through   $\|P_B(\bar{x}_k-x^*)\|$. For easy of the notation, we denote
\begin{equation}
\label{eq:deltat}
\bigtriangleup_k:=P_B(\bar{x}_k-x^*)
\end{equation}
throughout the paper.

The following lemma provides the recursive formula of $\bigtriangleup_k$, whose proof is provided in Appendix \ref{Proof:linear recur}.
\begin{lem}\label{lem:linear recur}
	Suppose Assumptions \ref{ass:str conv}-\ref{ass:A&rho&a} 
	hold. Then
	\begin{equation}\label{recursion 0}
	\bigtriangleup_{k+1}=\bigtriangleup_k-\alpha_{k}H\bigtriangleup_k+\al_k\left(\zeta_k+\eta_k+s_k+\epsilon_k\right)
	\end{equation}
	or
	\begin{equation}\label{recursion 1}
	\bigtriangleup_{k+1}=\left[I_d-\alpha_{k}\left(H+D_k\right)\right]\bigtriangleup_k+\al_k\left(\eta_k+s_k+\epsilon_k\right),
	\end{equation}
	where
	\begin{equation}
	\left\{\begin{aligned}\label{errors}
	&\zeta_k=-\frac{1}{m}P_B\left[\nabla f(\bar{x}_k)-\nabla f(x^*)-\nabla^2 f(x^*)(\bar{x}_k-x^*)\right],\\
	&\eta_k=\dfrac{1}{m}\sum_{j=1}^mP_B\left[\nabla f_j(\bar{x}_k)-\nabla f_j(x_{j,k})\right],\\
	&\epsilon_k=\dfrac{1}{\al_k}P_BC^T(\mu_{k-1}-\mu_k)+\frac{1}{m}P_B\nabla^2f(x^*)(P_B-I_d)(\bar{x}_k-x^*),\\
	&s_k=-\frac{1}{m}\sum_{j=1}^{m} P_Bs_{j,k},\\
	&D_k=-\zeta_k\dfrac{\bigtriangleup_k^T}{\|\bigtriangleup_k\|^2}.
	\end{aligned}\right.
	\end{equation}	
\end{lem}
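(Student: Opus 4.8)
The plan is to obtain both recursions as essentially \emph{algebraic identities}, built from the averaged dual update and the optimality (KKT) conditions of the Euclidean projection; Lemma \ref{lem:con iden} is not needed for the derivation itself, only later to argue that $\epsilon_k$ eventually vanishes. First I would use that $\psi(x)=\tfrac12\|x\|^2$, so the primal step in (\ref{consensus dual averaging alrithm 1}) is exactly $\bar x_{k+1}=P_{\mathcal{X}}(\bar z_k)$. Because Assumption \ref{ass:A&rho&a}(i) makes $A_k$ \emph{doubly} (hence column-) stochastic, averaging the dual step (\ref{DDA-dual}) over $j$ collapses the mixing term, $\tfrac1m\sum_j\sum_i[A_k]_{ji}z_{i,k-1}=\bar z_{k-1}$, yielding the exact recursion
\begin{equation*}
\bar z_k=\bar z_{k-1}-\frac{\alpha_k}{m}\sum_{j=1}^m\nabla F_j(x_{j,k};\xi_{j,k}).
\end{equation*}

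Next I would write the KKT conditions for two consecutive projections: $\bar z_{k-1}-\bar x_k=B^T\lambda_{k-1}+C^T\mu_{k-1}$ and $\bar z_k-\bar x_{k+1}=B^T\lambda_k+C^T\mu_k$ with nonnegative multipliers (the normal cone of a polyhedron is generated by its active constraint normals, so such a representation always exists). Subtracting gives $\bar x_{k+1}-\bar x_k=(\bar z_k-\bar z_{k-1})-B^T(\lambda_k-\lambda_{k-1})-C^T(\mu_k-\mu_{k-1})$. Applying $P_B$ and using the key cancellation $P_BB^T=\mathbf{0}$ (since $P_B$ of (\ref{eq:pb}) projects onto $\ker B\perp\operatorname{range}(B^T)$) eliminates the $\lambda$-differences, while $\bigtriangleup_{k+1}-\bigtriangleup_k=P_B(\bar x_{k+1}-\bar x_k)$ holds by linearity of $P_B$. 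This produces
\begin{equation*}
\bigtriangleup_{k+1}-\bigtriangleup_k=P_B(\bar z_k-\bar z_{k-1})+P_BC^T(\mu_{k-1}-\mu_k),
\end{equation*}
whose last term is precisely the $\alpha_k\cdot\tfrac{1}{\alpha_k}P_BC^T(\mu_{k-1}-\mu_k)$ contribution to $\alpha_k\epsilon_k$.

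It remains to expand $P_B(\bar z_k-\bar z_{k-1})$ using the averaged recursion. I would split each stochastic gradient as $\nabla F_j(x_{j,k};\xi_{j,k})=\nabla f_j(\bar x_k)+\big[\nabla f_j(x_{j,k})-\nabla f_j(\bar x_k)\big]+s_{j,k}$, so that the disagreement bracket (projected, averaged, and negated) becomes $\alpha_k\eta_k$ and the noise becomes $\alpha_k s_k$. For the leading term $\tfrac1m\sum_j\nabla f_j(\bar x_k)=\tfrac1m\nabla f(\bar x_k)$ I would Taylor-expand about $x^*$, writing $\nabla f(\bar x_k)=\nabla f(x^*)+\nabla^2f(x^*)(\bar x_k-x^*)+r_k$ (Assumption \ref{ass:str conv}(ii) controls $r_k$), whose remainder yields $\alpha_k\zeta_k$. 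Two structural facts close the accounting: the optimality/constraint-qualification condition (Assumption \ref{ass:opt cond}) gives $-\nabla f(x^*)\in\mathcal{N}_{\mathcal{X}}(x^*)=\{B^T\lambda:\lambda\geq0\}$, hence $\nabla f(x^*)\in\operatorname{range}(B^T)$ and $P_B\nabla f(x^*)=\mathbf{0}$; and decomposing $\bar x_k-x^*=\bigtriangleup_k+(I_d-P_B)(\bar x_k-x^*)$ with idempotence $P_B\bigtriangleup_k=\bigtriangleup_k$ turns $-\tfrac{\alpha_k}{m}P_B\nabla^2f(x^*)(\bar x_k-x^*)$ into $-\alpha_kH\bigtriangleup_k$ plus the residual $\tfrac{\alpha_k}{m}P_B\nabla^2f(x^*)(P_B-I_d)(\bar x_k-x^*)$, the second half of $\alpha_k\epsilon_k$. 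Collecting the four pieces gives (\ref{recursion 0}); then substituting $D_k=-\zeta_k\bigtriangleup_k^T/\|\bigtriangleup_k\|^2$, which satisfies $D_k\bigtriangleup_k=-\zeta_k$, converts $\alpha_k\zeta_k$ into $-\alpha_kD_k\bigtriangleup_k$ and produces (\ref{recursion 1}).

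The main obstacle I anticipate is the careful treatment of the projection multipliers: one must guarantee a (measurable) selection of nonnegative $\lambda_k,\mu_k$ realizing $\bar z_k-\bar x_{k+1}\in\mathcal{N}_{\mathcal{X}}(\bar x_{k+1})$ and keep the index convention for $\mu$ consistent with the definition of $\epsilon_k$. The rest is linear algebra plus a single Taylor expansion; it is worth emphasizing that (\ref{recursion 0})--(\ref{recursion 1}) hold as exact identities for every $k$, with Lemma \ref{lem:con iden} entering only afterward to force $\epsilon_k=\mathbf{0}$ for all large $k$.
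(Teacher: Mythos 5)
Your proposal is correct and follows essentially the same route as the paper's own proof in Appendix C: differencing the KKT conditions of consecutive projections, collapsing the mixing term via double stochasticity to get $\bar z_k=\bar z_{k-1}-\tfrac{\alpha_k}{m}\sum_j\nabla F_j(x_{j,k};\xi_{j,k})$, splitting the gradient into consensus-error, Taylor-remainder, and noise pieces, and using $P_BB^T=\mathbf{0}$, $P_B\nabla f(x^*)=\mathbf{0}$, and idempotence of $P_B$ to collect the terms into $\zeta_k,\eta_k,s_k,\epsilon_k$. Your two additions — spelling out via Assumption \ref{ass:opt cond} why $P_B\nabla f(x^*)=\mathbf{0}$, and noting that a measurable selection of multipliers must be fixed (the identities holding for any such selection) — are sound refinements of points the paper leaves implicit.
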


Lemma \ref{lem:linear recur} provides two kind of  recursive formulas of $\bigtriangleup_k$, where
(\ref{recursion 1}) will be used to analyse the almost sure convergence rate in Theorem \ref{thm:con rate} and  asymptotic normality of Algorithm \ref{alg:DDA}  in Theorem \ref{thm:asym norm}  and  (\ref{recursion 0}) will be
used to analysis the asymptotic efficiency of  Algorithm \ref{alg:DDA}   in Theorem \ref{asymptotic efficient}.

The following technical results will help us to study the rate of convergence of  $\|\bar{x}_k-x^*\|$ by focusing on the subspace  $\mathcal{Y}$  determined by the active constraints on the optimal solution $x^*$. 

\begin{lem}\label{lem:transition}
	Recall $\mathcal{Y}$, $U$ and $P_B$ have been defined in (\ref{eq: Y}) and (\ref{eq:pb}) respectively. Then
	\begin{itemize}
		\item [\rm{(i)}] $U^T:\mathcal{Y}\rightarrow\mathbb{R}^r\times\textbf{0}$ is a bijection, where $\textbf{0}^T=\underbrace{(0,0,\cdots,0)}_{d-r}$, and $'\times'$ is the Cartesian Product.
		\item [\rm{(ii)}]   For any  $y\in \mathcal{Y} $ and $ H\in\mathbb{R}^{d\times d}$,
		\begin{equation}\label{decomp}
		U^TP_{B}Hy=
		\left(
		\begin{array}{cc}
		G_1y_1\\
		\textbf{0}\\
		\end{array}\right),
		\end{equation}
		where $y_1\in\mathbb{R}^r$, $G_1$ is the $r$-order sequential principal minor of $U^THU$. Moreover, if there exists a constant $\mu>0$ such that
		\begin{equation*}
		y^THy\ge \mu \|y\|^2,\forall y\in \mathcal{Y},
		\end{equation*}
		then $G_1$ is a positive definite matrix.
	\end{itemize}
\end{lem}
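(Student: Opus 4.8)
The plan is to exploit that $U$ is an orthogonal matrix: since its columns $u_1,\dots,u_d$ form an orthonormal basis of $\mathbb{R}^d$, we have $U^TU=UU^T=I_d$ and $U^Tu_i=e_i$, the $i$-th standard basis vector. Everything then reduces to a change-of-coordinates computation in the $U$-basis. Recall also that $B^T(BB^T)^{\dagger}B$ is the orthogonal projection onto the row space $\mathcal{Y}^\perp$ of $B$, so $P_B$ is the symmetric idempotent orthogonal projection onto $\mathcal{Y}=\mathrm{span}(u_1,\dots,u_r)$; I would record this at the outset.

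For (i), I would write an arbitrary $y\in\mathcal{Y}$ as $y=\sum_{i=1}^r c_iu_i$, since $u_1,\dots,u_r$ is an orthonormal basis of $\mathcal{Y}$, whence $U^Ty=\sum_{i=1}^r c_ie_i=(c_1,\dots,c_r,0,\dots,0)^T\in\mathbb{R}^r\times\mathbf{0}$. Injectivity of $U^T|_{\mathcal{Y}}$ is immediate from invertibility of $U^T$, and surjectivity onto $\mathbb{R}^r\times\mathbf{0}$ holds because any such vector is the image of $\sum_{i=1}^r c_iu_i\in\mathcal{Y}$.

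For (ii), the key preliminary step is to diagonalize $P_B$ in the $U$-basis: since $P_B$ projects orthogonally onto $\mathrm{span}(u_1,\dots,u_r)$, checking entrywise that $u_i^TP_Bu_j=\delta_{ij}$ for $j\le r$ and $u_i^TP_Bu_j=0$ for $j>r$ gives $U^TP_BU=\bigl(\begin{smallmatrix}I_r&0\\0&0\end{smallmatrix}\bigr)$. Given $y\in\mathcal{Y}$, part (i) gives $U^Ty=(y_1^T,\mathbf{0}^T)^T$ with $y_1\in\mathbb{R}^r$, so inserting $UU^T=I_d$ yields $U^TP_BHy=(U^TP_BU)(U^THU)(U^Ty)$. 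Writing $M:=U^THU$ in $r$-versus-$(d-r)$ block form $\bigl(\begin{smallmatrix}M_{11}&M_{12}\\M_{21}&M_{22}\end{smallmatrix}\bigr)$, the left factor kills the bottom block rows, giving $(U^TP_BU)M=\bigl(\begin{smallmatrix}M_{11}&M_{12}\\0&0\end{smallmatrix}\bigr)$, and multiplying by $(y_1^T,\mathbf{0}^T)^T$ picks out only the first block column, leaving $U^TP_BHy=((M_{11}y_1)^T,\mathbf{0}^T)^T$. Hence $G_1=M_{11}$ is precisely the $r$-th order leading principal submatrix of $U^THU$, as claimed.

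Finally, for the definiteness claim I would reverse the coordinate change: given nonzero $y_1\in\mathbb{R}^r$, set $y:=U(y_1^T,\mathbf{0}^T)^T\in\mathcal{Y}$, noting $\|y\|=\|y_1\|$ by orthogonality of $U$. The same block computation gives $y_1^TG_1y_1=(y_1^T,\mathbf{0}^T)\,U^THU\,(y_1^T,\mathbf{0}^T)^T=y^THy\ge\mu\|y\|^2=\mu\|y_1\|^2>0$, so $G_1$ is positive definite. The argument is essentially routine linear algebra; the only points requiring care are correctly establishing the block structure $U^TP_BU=\bigl(\begin{smallmatrix}I_r&0\\0&0\end{smallmatrix}\bigr)$ and tracking which blocks survive the multiplications, together with observing that the hypothesis $y^THy\ge\mu\|y\|^2$ bounds exactly the quadratic form $y_1^TG_1y_1$, which is what positive definiteness of $G_1$ requires.
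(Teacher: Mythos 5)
Your proposal is correct and takes essentially the same route as the paper's proof: both establish the block diagonalization $U^TP_BU=\left(\begin{smallmatrix}I_r&\mathbf{0}\\ \mathbf{0}&\mathbf{0}\end{smallmatrix}\right)$, insert $UU^T=I_d$ to reduce (\ref{decomp}) to a block-matrix computation identifying $G_1$ with the leading $r\times r$ block of $U^THU$, and prove positive definiteness by pulling a nonzero $y_1\in\mathbb{R}^r$ back to $y=U(y_1^T,\mathbf{0}^T)^T\in\mathcal{Y}$ so that $y_1^TG_1y_1=y^THy\ge\mu\|y\|^2>0$. The only differences are cosmetic: you prove part (i) explicitly where the paper dismisses it as standard linear algebra, and you streamline the computation by noting $U^Ty$ already has the form $(y_1^T,\mathbf{0}^T)^T$, avoiding the paper's extra insertion of $P_By=y$.
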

The proof is presented in Appendix \ref{Proof:transition}.
\begin{thm}\label{thm:con rate}
	Suppose Assumptions \ref{ass:sample}, \ref{ass:str conv}-\ref{ass:a} hold with $p=2$ in Assumption \ref{ass:lip func}(ii). Then for any $\delta\in\left(0,1-1/(2\al)\right)$,
	\begin{equation}\label{rate}
	\|\bigtriangleup_{k}\|=o(\al_k^\delta) \quad  \text{a.s.}
	\end{equation}
\end{thm}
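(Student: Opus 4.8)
The plan is to convert the recursion (\ref{recursion 1}) into a one‑dimensional Lyapunov recursion for the rescaled quantity $V_k:=\alpha_k^{-2\delta}\|\bigtriangleup_k\|^2$ and then apply the Robbins--Siegmund supermartingale theorem (Lemma \ref{lem:Robbins-Siegmund}). First I would record the structural facts that make the drift coercive. By definition $\bigtriangleup_k=P_B(\bar{x}_k-x^*)$ lies in $\mathcal{Y}=\{x:Bx=0\}=\mathcal{T}_{\mathcal{X}}(x^*)$ for every $k$, and all perturbations $\eta_k,s_k,\epsilon_k$ carry a factor $P_B$, so the dynamics stay in $\mathcal{Y}$. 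On $\mathcal{Y}$ the restricted strong convexity (\ref{res str conv}) (equivalently the positive definiteness of $G_1$ in Lemma \ref{lem:transition}) gives $\bigtriangleup_k^T H\bigtriangleup_k\ge(\mu/m)\|\bigtriangleup_k\|^2$. I would then collect the orders of the error terms valid beyond the random identification time $K$ of Lemma \ref{lem:con iden}: $\epsilon_k=0$ eventually (the multiplier of the inactive constraint $Cx\le c$ vanishes for large $k$, and $(P_B-I_d)(\bar{x}_k-x^*)=0$ once $\bar x_k-x^*\in\mathcal Y$); $\|\zeta_k\|\le(c_0/m)\|\bigtriangleup_k\|^2$ by (\ref{second order massage}), hence $\|D_k\|\le(c_0/m)\|\bigtriangleup_k\|\to0$ a.s.; $\mathbb E[s_k\mid\mathcal F_k]=0$ with $\mathbb E[\|s_k\|^2\mid\mathcal F_k]$ bounded by (\ref{observe noise bound}); and $\|\eta_k\|\le(L/m\sigma)\sum_j\|z_{j,k-1}-\bar z_{k-1}\|$ by (\ref{first order massage}) and (\ref{consensus}), so $\sum_k\|\eta_k\|^2<\infty$ a.s. by Lemma \ref{lem:consis}(iii) and (\ref{consunsus sum 0}).

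Next I would do the drift computation. Since $\|D_k\|\to0$, the operator $H+D_k$ is eventually coercive on $\mathcal{Y}$: $\bigtriangleup_k^T(H+D_k)\bigtriangleup_k\ge\mu'\|\bigtriangleup_k\|^2$ with $\mu'=\mu/(2m)$ for $k\ge K'$ (random). Expanding $\|\bigtriangleup_{k+1}\|^2$ from (\ref{recursion 1}), taking $\mathbb E[\cdot\mid\mathcal F_k]$ to kill the martingale term $s_k$, bounding the $\eta_k$ cross‑term by Young's inequality, and using the bounded conditional variance of $s_k$, I expect
\begin{equation*}
\mathbb E\!\left[\|\bigtriangleup_{k+1}\|^2\,\middle|\,\mathcal F_k\right]\le\big(1-\mu'\alpha_k\big)\|\bigtriangleup_k\|^2+C\alpha_k^2+C''\alpha_k\|\eta_k\|^2,\qquad k\ge K',
\end{equation*}
the higher‑order contributions of $\zeta_k,\epsilon_k$ being absorbed because $\|\bigtriangleup_k\|\to0$ (Theorem \ref{thm:convergence}).

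Passing to $V_k=\alpha_k^{-2\delta}\|\bigtriangleup_k\|^2$ is the decisive step. The only delicate factor is $(\alpha_k/\alpha_{k+1})^{2\delta}=1+2\delta\alpha/k+o(1/k)$; here I invoke $\alpha<1$ from Assumption \ref{ass:a}, which forces $1/k=o(\alpha_k)$, so this factor is $1+o(\alpha_k)$ and the spurious $O(1/k)V_k$ term is swallowed by the negative drift. This yields, for $k$ large,
\begin{equation*}
\mathbb E\!\left[V_{k+1}\,\middle|\,\mathcal F_k\right]\le\Big(1-\tfrac{\mu'}{2}\alpha_k\Big)V_k+C'\alpha_k^{2-2\delta}+C''\alpha_k^{1-2\delta}\|\eta_k\|^2 .
\end{equation*}
The deterministic series $\sum_k\alpha_k^{2-2\delta}$ converges exactly when $\alpha(2-2\delta)>1$, i.e. $\delta<1-1/(2\alpha)$ — precisely the hypothesis, which is where the stated exponent range originates — while $\alpha_k^{1-2\delta}$ is bounded (as $\delta<1/2$) so $\sum_k\alpha_k^{1-2\delta}\|\eta_k\|^2<\infty$ a.s. by the consensus summability above. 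Robbins--Siegmund then gives that $V_k$ converges a.s. to a finite limit and $\sum_k\alpha_k V_k<\infty$ a.s.; since $\sum_k\alpha_k=\infty$, the limit is $0$, i.e. $V_k\to0$, which is exactly $\|\bigtriangleup_k\|=o(\alpha_k^\delta)$ a.s.

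The main obstacle I anticipate is the bookkeeping around the random identification time $K$: the coercive‑drift inequality, $\|D_k\|\to0$, $\epsilon_k=0$, and the consensus bounds must all hold from a finite (random) index onward simultaneously. Applying Robbins--Siegmund to the tail of the process — or on each event $\{K'=k_0\}$ separately and then taking the union over $k_0$ — resolves this, since the theorem's conclusion is a tail statement. The second place demanding care is the $(\alpha_k/\alpha_{k+1})^{2\delta}$ expansion: a careless bound there would reintroduce a non‑summable $1/k$ coefficient and destroy the supermartingale structure, so the use of $\alpha<1$ must be made explicit.
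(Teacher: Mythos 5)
Your route is genuinely different from the paper's. The paper divides (\ref{recursion 1}) by $\al_{k+1}^\delta$, left-multiplies by $U^T$ so that Lemma \ref{lem:transition} reduces everything to an $r$-dimensional linear recursion $y_{k+1}=y_k+\al_kF_ky_k+\al_k(e_k+\nu_k)$ with $F_k\to-G$ stable, and then invokes the \emph{pathwise} stability result of \cite[Lemma 3.1.1]{chen2006stochastic} (Lemma \ref{lem:rate}); the only probabilistic inputs are $\sum_k\al_ke_k<\infty$ a.s.\ (martingale convergence, needing $\sum_k\al_k^{2(1-\delta)}<\infty$) and $\nu_k\to0$ a.s. Your scalar Lyapunov recursion for $V_k=\al_k^{-2\delta}\|\bigtriangleup_k\|^2$ with Robbins--Siegmund replaces all of that, and it buys a real simplification: since $\bigtriangleup_k\in\mathcal{Y}$ and $y^THy\ge(\mu/m)\|y\|^2$ on $\mathcal{Y}$, you never need the change of basis $U^T$ or Lemma \ref{lem:transition} at all. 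Your bookkeeping is also right where it matters: the threshold $\delta<1-1/(2\al)$ arises from $\al(2-2\delta)>1$ in both proofs (compare your $\sum_k\al_k^{2-2\delta}$ with (\ref{sum noise 1}) and the paper's condition $2\al(1-\delta)>1$); $\delta<1/2$ is automatic since $\al<1$; $\sum_k\|\eta_k\|^2<\infty$ a.s.\ does follow from (\ref{first order massage}), (\ref{consensus}) and (\ref{consunsus sum 0}) via Lemma \ref{lem:consis}(iii); and the expansion $(\al_k/\al_{k+1})^{2\delta}=1+O(1/k)$ together with $1/k=o(\al_k)$ is exactly the correct use of $\al<1$.

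The one genuine gap is the localization at the random index. The time $K$ of Lemma \ref{lem:con iden} (hence your $K'$) is defined through the entire future trajectory ($B\bar x_k=b$, $C\bar x_k<c$ for \emph{all} $k\ge K$, $\|D_k\|$ small forever after), so it is not a stopping time and $\{K'=k_0\}\notin\mathcal{F}_k$. You therefore cannot ``apply Robbins--Siegmund on each event $\{K'=k_0\}$ separately,'' nor to ``the tail of the process'': the inequality $\mathbb{E}[V_{k+1}\mid\mathcal{F}_k]\le(1-\tfrac{\mu'}{2}\al_k)V_k+b_k$ is a statement about a conditional expectation on all of $\Omega$ and cannot be restricted to a non-$\mathcal{F}_k$-measurable event. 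The problem is not cosmetic: $\epsilon_k$ contains the multiplier $\mu_k$ of $\bar x_{k+1}$, so it is not even $\mathcal{F}_k$-measurable, and before identification the term $P_BC^T(\mu_{k-1}-\mu_k)$ is uncontrolled, so your drift inequality genuinely fails for small $k$. The correct repair is in the paper itself: in the proof of Lemma \ref{lem:asym effi} (Appendix \ref{proof:lem:asym effi}) the authors face the identical difficulty and resolve it with the \emph{adapted} truncation $\Upsilon_{l,k}=\{\|\bigtriangleup_j\|\le\epsilon,\,B\bar x_j=b,\,C\bar x_j<c,\ l\le j\le k\}\in\mathcal{F}_k$, working with $V_{l,k}=\|\bigtriangleup_k\|1_{\Upsilon_{l,k}}$, exploiting $1_{\Upsilon_{l,k}}\le1_{\Upsilon_{l,k-1}}$, and removing the truncation at the end via $\P\{\sup_{t\ge 2k_0}\|\bigtriangleup_t\|<\epsilon,\ K<k_0\}>1-a$ with $a$ arbitrary (equivalently, one introduces the stopping times $\tau_{k_0}=\inf\{k\ge k_0:\text{drift fails}\}$ and lets $k_0\to\infty$). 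Grafting that device onto your $V_k$ closes the argument; as written, this step is missing. It is worth noting that the paper's choice of the pathwise Lemma \ref{lem:rate} is precisely what lets it dodge this issue: all of its hypotheses are almost-sure statements along trajectories, indifferent to when the random identification time occurs.
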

\begin{proof}
	We employ \cite[Lemma 3.1.1]{chen2006stochastic} (Lemma \ref{lem:rate} in Appendix \ref{sec:SA})  to prove (\ref{rate}). We reformulate the recursion $ \frac{\bigtriangleup_{k+1}}{\al_{k+1}^\delta}$  in the form of (\ref{linear reccursion}) in  Lemma \ref{lem:rate}  first.
	
	Dividing $\al_{k+1}^\delta$ on both sides of equation (\ref{recursion 1}), we have
	\begin{equation*}
	\begin{aligned}\label{recursion 3}
	\frac{\bigtriangleup_{k+1}}{\al_{k+1}^\delta}&=\left(\dfrac{\al_k}{\al_{k+1}}\right)^\delta\left[I_d-\alpha_{k}\left(H+D_k\right)\right]\dfrac{\bigtriangleup_k}{\al_k^\delta}+\al_k\left(\dfrac{\eta_k}{\al_{k+1}^\delta}+\dfrac{s_k}{\al_{k+1}^\delta}+\dfrac{\epsilon_k}{\al_{k+1}^\delta}\right)\\
	&=\left[I_d-\alpha_{k}\left(H+C_k\right)\right]\dfrac{\bigtriangleup_k}{\al_k^\delta}+\al_k\left(\dfrac{\eta_k}{\al_{k+1}^\delta}+\dfrac{s_k}{\al_{k+1}^\delta}+\dfrac{\epsilon_k}{\al_{k+1}^\delta}\right)\\
	&=\left[I_d-\alpha_{k}H_k\right]\dfrac{\bigtriangleup_k}{\al_k^\delta}+\al_k\left(\dfrac{\eta_k}{\al_{k+1}^\delta}+\dfrac{s_k}{\al_{k+1}^\delta}+\dfrac{\epsilon_k}{\al_{k+1}^\delta}\right),
	\end{aligned}
	\end{equation*}
	where
	\begin{equation*}
	C_k\define \dfrac{1}{\al_k}\left(1-\left(\dfrac{\al_k}{\al_{k+1}}\right)^\delta\right)I_d+\left(\left(\dfrac{\al_k}{\al_{k+1}}\right)^\delta-1\right)H+\left(\dfrac{\al_k}{\al_{k+1}}\right)^\delta D_k
	\end{equation*}
	and $H_k:=H+C_k$.	
	Note that by Assumption \ref{ass:a}: $\al_k=a/k^\al,\al\in(2/3,1)$, we obtain
	\begin{equation*}
	\begin{aligned}
	\left(\dfrac{\al_k}{\al_{k+1}}\right)^\delta\rightarrow 1,\quad
	\dfrac{1}{\al_k}\left(1-\left(\dfrac{\al_k}{\al_{k+1}}\right)^\delta\right)
	=\dfrac{k^\al}{a}\left(1-\left(1+\dfrac{1}{k}\right)^{\al\delta}\right)\rightarrow 0.
	\end{aligned}
	\end{equation*}
	Note also that
	\begin{equation*}
	\|D_k\|\le \dfrac{\|\zeta_k\|}{\|\bigtriangleup_k\|}\le \dfrac{c\|P_B\|\|\bar{x}_k-x^*\|^2}{\|\bigtriangleup_k\|}=\dfrac{c\|P_B\|\|\bar{x}_k-x^*\|^2}{\|\bar{x}_k-x^*\|}=c\|P_B\|\|\bar{x}_k-x^*\|\rightarrow 0,  \quad \text{a.s.},
	\end{equation*}
	where the second inequality follows from (\ref{second order massage}) and the fact $\bar{x}_k\rightarrow x^*$ almost surely. Then  $C_k\rightarrow 0$ almost surely which implies $H_k=H+C_k\rightarrow H$  almost surely.
	By definitions of $\bigtriangleup_k, H$ and $D_k$ in (\ref{eq:h}), (\ref{eq:deltat}) and (\ref{errors}) respectively,
	\begin{equation*}
	\bigtriangleup_k=P_B\bigtriangleup_k,~ H=P_BH,~ D_k=P_BD_k.
	\end{equation*}
	Then	
	\begin{equation*}
	\begin{aligned}\label{trans}
	H_k\dfrac{\bigtriangleup_k}{\al_k^\delta}&=(H+C_k)\dfrac{\bigtriangleup_k}{\al_k^\delta}\\
	&=\dfrac{1}{\al_k}\left(1-\left(\dfrac{\al_k}{\al_{k+1}}\right)^\delta\right)\dfrac{\bigtriangleup_k}{\al_k^\delta}+\left(\dfrac{\al_k}{\al_{k+1}}\right)^\delta H \dfrac{\bigtriangleup_k}{\al_k^\delta}+\left(\dfrac{\al_k}{\al_{k+1}}\right)^\delta D_k\dfrac{\bigtriangleup_k}{\al_k^\delta}\\
	&=\dfrac{1}{\al_k}\left(1-\left(\dfrac{\al_k}{\al_{k+1}}\right)^\delta\right)\dfrac{P_B\bigtriangleup_k}{\al_k^\delta}+\left(\dfrac{\al_k}{\al_{k+1}}\right)^\delta P_BH \dfrac{\bigtriangleup_k}{\al_k^\delta}+\left(\dfrac{\al_k}{\al_{k+1}}\right)^\delta P_BD_k\dfrac{\bigtriangleup_k}{\al_k^\delta}\\
	&=P_B\left(\dfrac{1}{\al_k}\left(1-\left(\dfrac{\al_k}{\al_{k+1}}\right)^\delta\right)I_d+\left(\dfrac{\al_k}{\al_{k+1}}\right)^\delta H+\left(\dfrac{\al_k}{\al_{k+1}}\right)^\delta D_k\right)\dfrac{\bigtriangleup_k}{\al_k^\delta}\\
	&=P_BH_k\dfrac{\bigtriangleup_k}{\al_k^\delta}.
	\end{aligned}
	\end{equation*}
	Subsequently,
	\begin{equation}\label{recursion 8} \frac{\bigtriangleup_{k+1}}{\al_{k+1}^\delta}=\left[I_d-\alpha_{k}P_BH_k\right]\dfrac{\bigtriangleup_k}{\al_k^\delta}+\al_k\left(\dfrac{\eta_k}{\al_{k+1}^\delta}+\dfrac{s_k}{\al_{k+1}^\delta}+\dfrac{\epsilon_k}{\al_{k+1}^\delta}\right).
	\end{equation}	
	Left  multiplying $U^T$ on both sides of equation (\ref{recursion 8}), we have
	\begin{equation*}
	\begin{aligned}
	U^T\frac{\bigtriangleup_{k+1}}{\al_{k+1}^\delta}=U^T\left[I_d-\alpha_{k}P_BH_k\right]\dfrac{\bigtriangleup_k}{\al_k^\delta}+\al_kU^T\left(\dfrac{\eta_k}{\al_{k+1}^\delta}+\dfrac{s_k}{\al_{k+1}^\delta}+\dfrac{\epsilon_k}{\al_{k+1}^\delta}\right).
	\end{aligned}
	\end{equation*}
	Since $\bigtriangleup_k,\eta_k,s_k,\epsilon_k\in\mathcal{Y}$,  Lemma \ref{lem:transition} implies
	\begin{equation}\label{re1}
	\left(
	\begin{array}{cc}
	\dfrac{\bigtriangleup_{k+1}^{'}}{\al_{k+1}^\delta}\\
	\textbf{0}\\
	\end{array}\right)=
	\left(
	\begin{array}{cc}
	\dfrac{\bigtriangleup_{k}^{'}}{\al_{k}^\delta}\\
	\textbf{0}\\
	\end{array}\right)
	-\al_k\left(
	\begin{array}{cc}
	G_k\dfrac{\bigtriangleup_{k}^{'}}{\al_{k}^\delta}\\
	\textbf{0}\\
	\end{array}\right)+\al_{k}\left[
	\left(
	\begin{array}{cc}
	\dfrac{\eta_k^{'}}{\al_{k+1}^\delta}\\
	\textbf{0}\\
	\end{array}\right)+
	\left(
	\begin{array}{cc}
	\dfrac{s_k^{'}}{\al_{k+1}^\delta}\\
	\textbf{0}\\
	\end{array}\right)+
	\left(
	\begin{array}{cc}
	\dfrac{\epsilon_k^{'}}{\al_{k+1}^\delta}\\
	\textbf{0}\\
	\end{array}\right)
	\right],
	\end{equation}
	where
	\begin{equation}
	\label{eq: Deltap}
	\Delta_k^{'}=(U^T)^{(r)}\Delta_k       \quad   \eta_k^{'}=(U^T)^{(r)}\eta_k,\quad s_k^{'}=(U^T)^{(r)}s_k,\quad\epsilon_k^{'}=(U^T)^{(r)}\epsilon_k,
	\end{equation}
	$(U^T)^{(r)}$ is a $r\times d$-matrix composed of first $r$ row vectors of $U^T$ and $G_k$ is the $r$-order sequential principal minor of $U^TH_kU$.
	Obviously,  we only need to focus on the linear recurrence
	\begin{equation}\label{recursion 4}
	\dfrac{\bigtriangleup_{k+1}^{'}}{\al_{k+1}^\delta}=(I_r-\al_{k}G_k)\dfrac{\bigtriangleup_{k}^{'}}{\al_{k}^\delta}+\al_{k}\left(\dfrac{\eta_k^{'}}{\al_{k+1}^\delta}+\dfrac{s_k^{'}}{\al_{k+1}^\delta}+\dfrac{\epsilon_k^{'}}{\al_{k+1}^\delta}\right).
	\end{equation}
	Denote
	\begin{equation*}
	y_k=\dfrac{\bigtriangleup_{k}^{'}}{\al_{k}^\delta},\quad F_k=-G_k, \quad e_k=\dfrac{s_k^{'}}{\al_{k+1}^\delta},\quad \nu_k=\dfrac{\eta_k^{'}}{\al_{k+1}^\delta}+\dfrac{\epsilon_k^{'}}{\al_{k+1}^\delta},
	\end{equation*}
	(\ref{recursion 4}) can be rewritten as
	\begin{equation*}
	y_{k+1}=y_k+\al_{k}F_ky_k+\al_{k}\left(e_k+\upsilon_k\right),
	\end{equation*}
	which is in the form (\ref{linear reccursion}) of Lemma \ref{lem:rate}.
	
	In what follows, we verify the conditions of  \cite[Lemma 3.1.1]{chen2006stochastic}.
	
	Firstly, we show that $F_k$ converges  to a stable matrix $F$. Note that
	$G_k$ is the $r$-order sequential principal minor of $U^TH_kU$ and $H_k\rightarrow H$ almost surely, $F_k$ converges to $-G$,
	where $G$ is
	the $r$-order sequential principal minor  of $U^THU$. By Lemma \ref{lem:transition}(ii) it follows from Assumption \ref{ass:str conv}(iii) that the $r$-order sequential principal minor  of $U^THU$ is  a  positive definite  matrix, which implies the
	stability of the limit of $\{F_k\}$.

	Next, we show $\nu_k\rightarrow 0$ almost surely, where it is sufficient to prove
	\begin{equation*}
	\dfrac{\epsilon_k^{'}}{\al_{k+1}^\delta}\rightarrow 0,  \qquad \dfrac{\eta_k^{'}}{\al_{k+1}^\delta}\rightarrow 0.
	\end{equation*}
	On the one hand, recall the definition (\ref{errors})
	\begin{equation*}
	\epsilon_k=\dfrac{1}{\al_k}P_BC^T(\mu_{k-1}-\mu_k)+\frac{1}{m}P_B\nabla^2f(x^*)(P_B-I_d)(\bar{x}_k-x^*).
	\end{equation*}
	By  Lemma \ref{lem:con iden},
	$\epsilon_k=0$  almost surely when $k$ is large enough  as $\mu_{k}=\mu_{k+1}=0$  and $(P_B-I_d)(\bar{x}_k-x^*)=0$ when $k\geq K$, where $K<\infty$ is specified in Lemma \ref{lem:con iden}.
	Then
	$\dfrac{\epsilon_k^{'}}{\al_{k+1}^\delta}=\dfrac{(U^T)^{(r)}\epsilon_k}{\al_{k+1}^\delta}=0$  almost surely. On the other hand, note that
	\begin{equation*}
	\begin{aligned}\label{sum noise 0}
	&\mathbb{E}[\|\eta_k^{'}\|^2]=\mathbb{E}[\|(U^T)^{(r)}\eta_k\|^2]=\mathbb{E}\Big[\Big\|(U^T)^{(r)}{\tfrac{1}{m}\sum_{j=1}^mP_B(\nabla f_j(x_{j,k})-\nabla f_j(\bar{x}_k))}\Big\|^2\Big]\\
	\le&\frac{\left\|(U^T)^{(r)}\right\|^2\|P_B\|^2}{m}\sum_{j=1}^m\mathbb{E}[\|\nabla f_j(x_{j,k})-\nabla f_j(\bar{x}_k)\|^2]
	\le\frac{\left\|(U^T)^{(r)}\right\|^2\|P_B\|^2L^2}{m}\sum_{j=1}^m\mathbb{E}[\|x_{j,k}-\bar{x}_k\|^2],
	\end{aligned}
	\end{equation*}
	where the  last inequality follows from  the Lipschitz continuity of $\nabla f_j(\cdot)$. By using the fact
	\begin{equation}\label{est:a_k-delta}
	\left(\dfrac{\al_k}{\al_{k+1}}\right)^\delta=\left(1+\dfrac{1}{k}\right)^{\al\delta}\le2^{\al\delta},
	\end{equation}
	and denoting $c_m={4^{\delta}\left\|(U^T)^{(r)}\right\|^2\|P_B\|^2 L^2}/{m}$, we have
	\begin{equation}\label{sum noise 1}
	\begin{aligned}
	&\sum_{k=1}^\infty\mathbb{E}\left[\left\|\dfrac{\eta_k^{'}}{\al_{k+1}^\delta}\right\|^2\right]\le
	c_m\sum_{k=1}^\infty\sum_{j=1}^m\mathbb{E}\left[\left\|\dfrac{x_{j,k}-\bar{x}_k}{\al_{k}^\delta}\right\|^2\right]\\
	\le& c_m\sum_{k=1}^\infty\sum_{j=1}^m\mathbb{E}\left[\left\|\dfrac{z_{j,k-1}-\bar{z}_{k-1}}{\al_{k}^\delta}\right\|^2\right] =c_m\sum_{k=1}^\infty\sum_{j=1}^m\mathbb{E}\left[\left\|\dfrac{z_{j,k}-\bar{z}_{k}}{\al_{k+1}^\delta}\right\|^2\right]+c_m\mathbb{E}\left[\left\|\dfrac{z_{j,0}-z_0}{\al_{1}^\delta}\right\|^2\right]\\
	\le& c'_m\sum_{k=2}^\infty\dfrac{\al_k^2}{\al_{k}^{2\delta}}+c_m\mathbb{E}\left[\left\|\dfrac{z_{j,0}-z_0}{\al_{1}^\delta}\right\|^2\right]
	=c'_m\sum_{k=2}^\infty\dfrac{a^{2-2\delta}}{k^{2\al(1-\delta)}}+c_m\mathbb{E}\left[\left\|\dfrac{z_{j,0}-z_0}{\al_{1}^\delta}\right\|^2\right]<\infty,
	\end{aligned}
	\end{equation}
	where the second inequality follows from (\ref{consensus}), the third one from (\ref{consensus rate 1}), the last one from $2\al(1-\delta)>1$ by the definition,	and $c'_m$ is a constant.	
	Then by monotone convergence theorem,
	\begin{equation*}
	\sum_{k=0}^\infty\left\|\dfrac{\eta_k^{'}}{\al_{k+1}^\delta}\right\|^2<\infty\quad\text{a.s.},
	\end{equation*}
	which implies $\dfrac{\eta_k^{'}}{\al_{k+1}^\delta}\rightarrow 0$ almost surely. Therefore,  $\nu_k\rightarrow 0$ almost surely.
	
	We are left to verify
	\begin{equation}\label{sum noise}
	\sum_{k=1}^\infty\al_ke_k
	<\infty\quad \text{a.s.}
	\end{equation}
	Denote
	\begin{equation*}
	e_k^{'}=\left(\dfrac{\al_k}{\al_{k+1}}\right)^\delta(U^T)^{(r)}s_k.
	\end{equation*}
	Obviously,  $\{e_k^{'},\mathcal{F}_{k+1}\}$  is a  martingale difference sequence since
	$\{s_k,\mathcal{F}_{k+1}\}$ is a martingale difference sequence.
	Then
	\begin{equation*}
	\begin{aligned} &\sup_{k}\mathbb{E}[\|e_k^{'}\|^2|\mathcal{F}_{k}]=\sup_{k}\mathbb{E}[\|\left(\dfrac{\al_k}{\al_{k+1}}\right)^\delta(U^T)^{(r)}s_k\|^2|\mathcal{F}_{k}]\\
	&\le 4^\delta\left\|(U^T)^{(r)}\right\|^2\sup_{k}\mathbb{E}[\|s_k\|^2|\mathcal{F}_{k}]
	=4^\delta\left\|(U^T)^{(r)}\right\|^2\sup_{k}\mathbb{E}\left[\left\|\frac{1}{m}\sum_{j=1}^{m} P_Bs_{j,k}\right\|^2\bigg|\mathcal{F}_{k}\right]\\
	&\le 4^\delta\left\|(U^T)^{(r)}\right\|^2\|P_B\|^2\sup_{k}\frac{1}{m}\sum_{j=1}^{m}\mathbb{E}\left[\left\|s_{j,k}\right\|^2\bigg|\mathcal{F}_{k}\right]
	\le4^\delta\left\|(U^T)^{(r)}\right\|^2 \|P_B\|^24L_0^2<\infty,
	\end{aligned}
	\end{equation*}
	where the first inequality follows from (\ref{est:a_k-delta}), 
	the second one from the convexity of $\|\cdot\|^2$, and the last one from
	Assumptions \ref{ass:sample} and \ref{ass:str conv},  which imply
	\begin{equation*}
	\mathbb{E}\left[\|s_{j,k}\|^2\big|\mathcal{F}_k\right]=\mathbb{E}\left[\|\nabla f_j(x_{j,k})-\nabla F_j(x_{j,k};\xi_{j,k})\|^2\big|\mathcal{F}_k\right]\leq 4L_0^2,
	\end{equation*}
	and $L_0^2$ is defined as in (\ref{eq: L0}).
	Since
	\begin{equation*}
	\sum_{k=1}^\infty\al_k^{2(1-\delta)}=\sum_{k=1}^\infty\frac{a^{2(1-\delta)}}{k^{2(1-\delta)\al}}<\infty,
	\end{equation*}
	then by the convergence theorem
	for martingale difference sequences \cite[Appendix B.6, Theorem B 6.1]{chen2006stochastic},
	\begin{equation*}
	\sum_{k=1}^\infty\al_ke_k=\sum_{k=1}^\infty\al_k^{1-\delta}e_k^{'}<\infty.
	\end{equation*}
	Then employing \cite[Lemma 3.1.1]{chen2006stochastic} yields $y_k=\dfrac{\bigtriangleup_{k}^{'}}{\al_{k}^\delta}\to 0$ almost surely. By the definition of   $\Delta_{k}'$ in (\ref{eq: Deltap}), we conclude that
	$\|\bigtriangleup_{k+1}\|=o(\al_k^\delta)$ almost surely. The proof is completed.	
\end{proof}

The almost sure convergence rate in terms of the step-size of stochastic approximation algorithms for root-finding problems have been well studied, see \cite{chen2006stochastic, Ljung1992Stochastic}.
More recently, \cite{xu2012consensus,Tang2018Convergence}  study the convergence rate of consensus problem when stochastic approximation method is used.
To the best of our knowledge, Theorem \ref{thm:con rate} seems to be the first result  on  almost convergence rate of  stochastic approximation method for  distributed  constrained  stochastic optimization problems. As we will see, this result is useful for establishing asymptomatic normality of the DDA algorithm.

\section{Asymptotic normality and asymptotic efficiency}
Asymptotic normality and asymptotic efficiency of stochastic algorithms can be traced back to the works on 1950s \cite{chung1954stochastic,sacks1958asymptotic}. More recently,   \cite{lei2018asymptotic, bianchi2013performance} study  the  asymptotic normality and asymptotic efficiency of  stochastic algorithms for  distributed unconstrained optimization problem.  In this section, we focus on these asymptotic properties of Algorithm \ref{alg:DDA} for distributed constrained optimization problems.

We first present the asymptotic normality of Algorithm \ref{alg:DDA}.

\begin{thm}\label{thm:asym norm}
	Suppose Assumptions \ref{ass:sample}, \ref{ass:str conv}-\ref{ass:a} hold with $p>2$ in Assumption \ref{ass:lip func}(ii). Let $x^*$ be  the limit point of sequence $\{\bar x_{k}\}$.  The covariance matrix 	mapping
	$\sum_{j=1}^m\Cov(\nabla F_j(\cdot; \xi_j))$ is continuous  at point  $x^*$. Then for any $1\leq j\leq m$,
	\begin{equation}\label{limit d}
	\dfrac{x_{j,k}-x^*}{\sqrt{\al_{k}}}\overset{d}{\longrightarrow}N(0,\Sigma),
	\end{equation}
	where
	\begin{equation}\label{def:Sigma}
	\Sigma=U\left(
	\begin{array}{cc}
	\Sigma_1&\textbf{0}\\
	\textbf{0}&\textbf{0}\\
	\end{array}\right)U^T,
	\end{equation}
	\begin{equation}\label{def:Sigma_1}
	\Sigma_1=\int_{0}^{\infty}e^{(-G)t}(U^T)^{(r)}P_B\bar{\Sigma}P_B(U^T)^{(r)T}e^{(-G^T)t}\d t,\quad\bar{\Sigma}=\dfrac{1}{m^2}\sum_{j=1}^m\Cov(\nabla F_j(x^*;\xi_j)),
	\end{equation}
	$(U^T)^{(r)}\in\mathbb{R}^{r\times d}$ is composed by first $r$ row vectors of $U^T$, $G$ is the $r$-order sequential principal minor of $U^THU$ and $H$ is defined
	as in (\ref{eq:h}).

\end{thm}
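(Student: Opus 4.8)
The plan is to reduce the joint statement about every agent's iterate $x_{j,k}$ to a central limit theorem for the single averaged sequence $\bar{x}_k$, and from there to a standard stochastic-approximation CLT for the projected error $\bigtriangleup_k=P_B(\bar{x}_k-x^*)$, carried out in the $r$-dimensional coordinates furnished by Lemma \ref{lem:transition}. Both reductions follow from results already in hand. By the consensus estimate (\ref{consensus rate 1}) of Lemma \ref{lem:consis}(iii) together with (\ref{consensus}) we have $\mathbb{E}[\|x_{j,k}-\bar{x}_k\|^2]\le\sigma^{-2}\mathbb{E}[\|z_{j,k-1}-\bar{z}_{k-1}\|^2]=\mathcal{O}(\al_k^2)$, so $\mathbb{E}[\|(x_{j,k}-\bar{x}_k)/\sqrt{\al_k}\|^2]=\mathcal{O}(\al_k)\to0$, giving $(x_{j,k}-\bar{x}_k)/\sqrt{\al_k}\to0$ in probability. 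By the active-set identification of Lemma \ref{lem:con iden}, there is an almost surely finite $K$ with $\bar{x}_k-x^*=P_B(\bar{x}_k-x^*)=\bigtriangleup_k$ for all $k\ge K$. Hence, by Slutsky's theorem, it suffices to prove $\bigtriangleup_k/\sqrt{\al_k}\overset{d}{\to}N(0,\Sigma)$.

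I would start from recursion (\ref{recursion 1}) of Lemma \ref{lem:linear recur} and project it into $\mathbb{R}^r$ exactly as in the proof of Theorem \ref{thm:con rate}, but \emph{without} the $\al^\delta$ rescaling. Writing $\bigtriangleup_k^{'}=(U^T)^{(r)}\bigtriangleup_k$ and applying Lemma \ref{lem:transition}, this produces the $r$-dimensional linear recursion
\begin{equation*}
\bigtriangleup_{k+1}^{'}=(I_r-\al_k G_k)\bigtriangleup_k^{'}+\al_k\big(\eta_k^{'}+s_k^{'}+\epsilon_k^{'}\big),
\end{equation*}
valid for $k\ge K$ (so that the initial random segment is irrelevant to the limit), where $G_k$ is the $r$-order sequential principal minor of $U^TH_kU$ and $\eta_k^{'},s_k^{'},\epsilon_k^{'}$ are the transformed terms of (\ref{eq: Deltap}). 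This is the canonical Fabian form ``$(I-\al_kG_k)\cdot+\al_k(\text{martingale difference})+\al_k(\text{negligible})$'', and I would invoke the Fabian-type CLT for stochastic approximation (\cite{fabian1968asymptotic}; see also \cite{chen2006stochastic}).

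Verifying its hypotheses is the technical heart of the argument. First, $G_k\to G$ with $G$ positive definite (hence $-G$ stable); this was already established in the proof of Theorem \ref{thm:con rate} via Lemma \ref{lem:transition}(ii) and Assumption \ref{ass:str conv}(iii). Second, the martingale-difference noise $s_k^{'}=(U^T)^{(r)}s_k$ must have a convergent conditional covariance and satisfy Lindeberg's condition: using the cross-agent conditional independence in Assumption \ref{ass:sample}, the almost sure convergence $x_{j,k}\to x^*$, and the assumed continuity of $\sum_j\Cov(\nabla F_j(\cdot;\xi_j))$ at $x^*$, I would show
\begin{equation*}
\mathbb{E}\big[s_k^{'}(s_k^{'})^T\,\big|\,\mathcal{F}_k\big]\longrightarrow(U^T)^{(r)}P_B\bar{\Sigma}P_B(U^T)^{(r)T}=:S,\qquad\bar{\Sigma}=\tfrac{1}{m^2}\textstyle\sum_{j=1}^m\Cov(\nabla F_j(x^*;\xi_j)),
\end{equation*}
and verify Lindeberg's condition from the uniform moment bound $\mathbb{E}[\|s_{j,k}\|^p\mid\mathcal{F}_k]\le2^pL_0^p$ in (\ref{observe noise bound}) with $p>2$. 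Third, the perturbations must be $o(\sqrt{\al_k})$: by Lemma \ref{lem:con iden}, $\epsilon_k^{'}=0$ for $k\ge K$ almost surely, while $\|\eta_k^{'}\|\le C\,m^{-1}\sum_j\|x_{j,k}-\bar{x}_k\|=\mathcal{O}(\al_k)$ in $L^2$ by the consensus rate, so $\eta_k^{'}/\sqrt{\al_k}\to0$ in probability. The main obstacle I anticipate is precisely this joint verification: correctly assembling the per-agent conditional covariances into $\bar{\Sigma}$ in the distributed setting, and controlling the coupling between the fluctuating drift $G_k$ and the martingale noise tightly enough that the Fabian hypotheses hold with the step-size exponent $\al\in(2/3,1)$ — being strictly below $1$, this is exactly what removes any drift-shift correction and fixes the scaling at $\sqrt{\al_k}$.

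With the hypotheses verified, the SA CLT delivers $\bigtriangleup_k^{'}/\sqrt{\al_k}\overset{d}{\to}N(0,\Sigma_1)$ with $\Sigma_1=\int_0^\infty e^{(-G)t}Se^{(-G^T)t}\,\d t$, matching (\ref{def:Sigma_1}). Finally I would lift back to $\mathbb{R}^d$: since $\bigtriangleup_k\in\mathcal{Y}$ and $U$ is orthogonal, Lemma \ref{lem:transition}(i) gives $\bigtriangleup_k=U\left(\begin{smallmatrix}\bigtriangleup_k^{'}\\\textbf{0}\end{smallmatrix}\right)$, so the continuous-mapping theorem yields $\bigtriangleup_k/\sqrt{\al_k}\overset{d}{\to}N(0,\Sigma)$ with $\Sigma=U\,\mathrm{diag}(\Sigma_1,\textbf{0})\,U^T$ as in (\ref{def:Sigma}). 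Combining this with the two reductions of the first paragraph through Slutsky's theorem establishes (\ref{limit d}) for every $1\le j\le m$ and completes the proof.
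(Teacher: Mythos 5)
Your architecture coincides with the paper's: project the error recursion of Lemma \ref{lem:linear recur} onto $\mathbb{R}^r$ via Lemma \ref{lem:transition}, apply the stochastic-approximation CLT (Lemma \ref{lem:asym norm}, i.e.\ \cite[Theorem 3.3.1]{chen2006stochastic}), lift back with the continuous-mapping theorem, and dispose of active-set entry and consensus by Slutsky. But there is one genuine gap, and it sits exactly at the point you flag as ``the main obstacle'': your treatment of the consensus perturbation $\eta_k^{'}$. You argue that $\mathbb{E}[\|\eta_k^{'}\|^2]=\mathcal{O}(\al_k^2)$ gives $\eta_k^{'}/\sqrt{\al_k}\to 0$ in probability and that this lets you fold $\eta_k^{'}$ into the negligible term $\nu_k$ of the CLT. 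That does not verify the lemma's hypothesis: condition (iii) of Lemma \ref{lem:asym norm} requires $\nu_k=o(\sqrt{\al_k})$ \emph{pathwise} (almost surely), and this cannot be extracted from the consensus estimates. Indeed, the Borel--Cantelli/martingale route used in the proof of Theorem \ref{thm:con rate} needs $\sum_k\mathbb{E}\big[\|\eta_k^{'}/\al_{k+1}^\delta\|^2\big]\le c\sum_k\al_k^{2(1-\delta)}<\infty$, i.e.\ $2\al(1-\delta)>1$, which forces $\delta<1-1/(2\al)<1/2$ since $\al<1$; so the best available almost sure rate for $\eta_k^{'}$ is $o(\al_k^\delta)$ with $\delta$ strictly below $1/2$, and the condition $\eta_k^{'}=o(\sqrt{\al_k})$ a.s.\ is out of reach. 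An in-probability rate is not a substitute, because the perturbation enters the normalized recursion through the weighted sum $\sum_{t}\Psi_{t+1}^k\frac{\al_t}{\sqrt{\al_{t+1}}}\eta_t^{'}$ and one must control this accumulation, not just the last term.

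This is precisely why the paper does not apply the CLT lemma to $\bigtriangleup_k^{'}$ directly. It introduces the auxiliary sequence $\bigtriangleup_k^{''}$ in (\ref{recursion 5}), defined by the same recursion with $\eta_k^{'}$ deleted, and proves $(\bigtriangleup_{k}^{'}-\bigtriangleup_{k}^{''})/\sqrt{\al_k}\to 0$ in probability by unrolling the difference as in (\ref{re2}), invoking the exponential bound (\ref{mat bound}) on the transition matrices $\Psi_t^k$, the $L^1$ estimate $\mathbb{E}\|\eta_t^{'}\|\le b_3\al_{t-1}$ from (\ref{consensus rate 1}), and \cite[Lemma 3.3.2]{chen2006stochastic} to show the convolution sum in (\ref{mean con 1}) vanishes; only then is Lemma \ref{lem:asym norm} applied to $\bigtriangleup_k^{''}$, whose perturbation $\nu_k=\zeta_k^{'}+\epsilon_k^{'}=o(\al_k^{2\delta})$ is a.s.\ $o(\sqrt{\al_k})$ for $\delta\in[1/4,1-1/(2\al))$, available because $\al>2/3$. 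Your proposal needs this comparison device (or an equivalent $L^1$ convolution argument) inserted where you currently assert in-probability negligibility. A minor further deviation: you keep $\zeta_k$ inside the random drift $G_k$ via (\ref{recursion 1}), whereas the paper uses (\ref{recursion 0}) with the constant matrix $G$ and places $\zeta_k^{'}$ into $\nu_k$, which is where the almost sure rate of Theorem \ref{thm:con rate} is genuinely consumed ($\|\zeta_k^{'}\|=\mathcal{O}(\|\bigtriangleup_k\|^2)=o(\al_k^{2\delta})$ a.s.); your variant is workable but requires a version of the CLT tolerating an a.s.-convergent random drift, so the constant-drift formulation is the cleaner match to the lemma as stated. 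The remaining steps of your proposal — the covariance identification yielding $\bar{\Sigma}$ from the conditional independence in Assumption \ref{ass:sample}, the Lindeberg verification from (\ref{observe noise bound}) with $p>2$, and both Slutsky reductions — agree with the paper's proof.
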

\begin{proof}
	We employ  \cite[Theorem 3.3.1]{chen2006stochastic} (Lemma \ref{lem:asym norm} in Appendix \ref{sec:SA})  to prove (\ref{limit d}). By definition (\ref{eq:h}),
	\begin{equation*}
	H=\dfrac{1}{m}P_B\nabla^2f(x^*)P_B=\dfrac{1}{m}P_B^2\nabla^2f(x^*)P_B=P_BH.
	\end{equation*}
	Then
	(\ref{recursion 0}) can be reformulated  as
	\begin{equation}\label{recursion 7}
	\bigtriangleup_{k+1}=\left[I_d-\alpha_{k}P_BH\right]\bigtriangleup_k+\al_k\left(\zeta_k+\eta_k+s_k+\epsilon_k\right).
	\end{equation}
	Left  multiplying $U^T$ on both side of  (\ref{recursion 7}),
	Lemma \ref{lem:transition} implies
	\begin{equation*}
	\left(
	\begin{array}{cc}
	\bigtriangleup_{k+1}^{'}\\
	\textbf{0}\\
	\end{array}\right)=
	\left(
	\begin{array}{cc}
	\bigtriangleup_{k}^{'}\\
	\textbf{0}\\
	\end{array}\right)
	-\al_k\left(
	\begin{array}{cc}
	G\bigtriangleup_{k}^{'}\\
	\textbf{0}\\
	\end{array}\right)+\al_{k}\left[
	\left(
	\begin{array}{cc}
	\zeta_k^{'}\\
	\textbf{0}\\
	\end{array}\right)+
	\left(
	\begin{array}{cc}
	\eta_k^{'}\\
	\textbf{0}\\
	\end{array}\right)+
	\left(
	\begin{array}{cc}
	s_k^{'}\\
	\textbf{0}\\
	\end{array}\right)+
	\left(
	\begin{array}{cc}
	\epsilon_k^{'}\\
	\textbf{0}\\
	\end{array}\right)
	\right],
	\end{equation*}
	where
	\begin{equation*}
	\bigtriangleup_{k}^{'}=(U^T)^{(r)}\bigtriangleup_{k},\quad\zeta_k^{'}=(U^T)^{(r)}\zeta_k,\quad\eta_k^{'}=(U^T)^{(r)}\eta_k,\quad s_k^{'}=(U^T)^{(r)}s_k,\quad\epsilon_k^{'}=(U^T)^{(r)}\epsilon_k,
	\end{equation*}
	$(U^T)^{(r)}$ is a $r\times d$-matrix composed of first $r$ row vectors of $U^T$, and $G$ is the $r$-order sequential principal minor of $U^THU$. Define
	\begin{equation}\label{recursion 5}
	\bigtriangleup_{k+1}^{''}: =(I_r-\al_{k}G)\bigtriangleup_{k}^{''}+\al_{k}\left(\zeta_k^{'}+s_k^{'}+\epsilon_k^{'}\right),
	\end{equation}
	where the initial $\bigtriangleup_{0}^{''}\in\mathbb{R}^r$ is arbitrary.
	Consequently,
	\begin{equation}\label{recursion 6}
	\begin{aligned} \dfrac{\bigtriangleup_{k+1}^{'}-\bigtriangleup_{k+1}^{''}}{\sqrt{\al_{k+1}}}&=\sqrt{\dfrac{\al_{k}}{\al_{k+1}}}(I_r-\al_{k}G)\dfrac{\bigtriangleup_{k}^{'}-\bigtriangleup_{k}^{''}}{\sqrt{\al_{k}}}+\dfrac{\al_k}{\sqrt{\al_{k+1}}}\eta_k^{'}\\
	&=(I_r-\al_{k}G_k^{'})\dfrac{\bigtriangleup_{k}^{'}-\bigtriangleup_{k}^{''}}{\sqrt{\al_{k}}}+\dfrac{\al_k}{\sqrt{\al_{k+1}}}\eta_k^{'},
	\end{aligned}
	\end{equation}
	where
	\begin{equation*}
	G_k^{'}:=\left(\dfrac{1}{\al_{k}}-\dfrac{1}{\sqrt{\al_k\al_{k+1}}}\right)I_r+\sqrt{\dfrac{\al_{k}}{\al_{k+1}}}G.
	\end{equation*}
	For $k\geq t$, denote
	\begin{equation*}
	\Psi_t^k\define \left(I_r-\al_{k}G_k^{'}\right)\cdots\left(I_r-\al_{t}G_t^{'}\right),~\Psi_{t+1}^{t}=I_r.
	\end{equation*}
	Recursively,  we  can reformulate (\ref{recursion 6})  as
	\begin{equation}\label{re2}
	\dfrac{\bigtriangleup_{k+1}^{'}-\bigtriangleup_{k+1}^{''}}{\sqrt{\al_{k+1}}}=\Psi_1^k\dfrac{\bigtriangleup_{1}^{'}-\bigtriangleup_{1}^{''}}{\sqrt{\al_{1}}}+\sum_{t=1}^k\Psi_{t+1}^k\dfrac{\al_t}{\sqrt{\al_{t+1}}}\eta_t^{'}.
	\end{equation}
	
	By the Assumption \ref{ass:a} and the definition of $G_k^{'}$, it is easy to get that $\lim_{k\rightarrow\infty}G_k^{'}=G.$
	Since $-G$ is stable, by \cite[Inequality (3.1.8) in Lemma 3.1.1]{chen2006stochastic}, there exist constants $b_1,b_2>0$ such that
	\begin{equation}\label{mat bound}
	\left\|\Psi_t^k\right\|\le b_1\exp(-b_2\sum_{l=t}^k\al_l), \quad  \forall k\ge t.
	\end{equation}
	Obviously, (\ref{mat bound}) implies the first term on the right-hand side of (\ref{re2}) tends to zero  almost surely. Next, we show that the second term on the right-hand side of (\ref{re2}) tends to $0$ in probability.

	Note that
	\begin{equation*}
	\dfrac{\al_t}{\sqrt{\al_{t+1}}}=
	\left(1+\dfrac{1}{t}\right)^{\dfrac{\al}{2}}\sqrt{\al_{t}}\le\sqrt{a}\left(\dfrac{3}{2}\right)^{\dfrac{\al}{2}}\sqrt{\al_{t}},
	\end{equation*}
	and
	\begin{equation*}
	\begin{aligned}
	\mathbb{E}\left[\left\|\eta_t^{'}\right\|\right]&=\mathbb{E}\left[\left\|(U^T)^{(r)}\dfrac{1}{m}\sum_{j=1}^mP_B(\nabla f_j(x_{j,t})-\nabla f_j(\bar{x}_t))\right\|\right]\\
	&\le \dfrac{\|(U^T)^{(r)}\|\|P_B\|{L}}{m}\sum_{j=1}^m\mathbb{E}\left[\left\|x_{j,t}-\bar{x}_t\right\|\right]\\
	&\le \dfrac{\|(U^T)^{(r)}\|\|P_B\|{L}}{m}\sum_{j=1}^m\mathbb{E}\left[\left\|z_{j,t-1}-\bar{z}_{t-1}\right\|\right]\le b_3\alpha_{t-1},
	\end{aligned}
	\end{equation*}
	where the second inequality follows from (\ref{consensus}) and the last one from (\ref{consensus rate 1}). We obtain the estimate
	\begin{equation}\label{mean con 1}
	\begin{aligned}
	\mathbb{E}\left[\left\|\sum_{t=1}^k\Psi_{t+1}^k\dfrac{\al_t}{\sqrt{\al_{t+1}}}\eta_t^{'}\right\|\right]&\le \sum_{t=1}^k\left\|\Psi_{t+1}^k\right\|\dfrac{\al_t}{\sqrt{\al_{t+1}}}\mathbb{E}\left[\left\|\eta_t^{'}\right\|\right]\\
	&\le b_3\sqrt{a}\left(\dfrac{3}{2}\right)^{\dfrac{\al}{2}}\sum_{t=1}^k\left\|\Psi_{t+1}^k\right\|\sqrt{\al_{t}}\al_{t-1}\\
	&\le b_3\sqrt{a}\left(\dfrac{3}{2}\right)^{\dfrac{\al}{2}}\sum_{t=1}^k\al_t\|\Psi_{t+1}^k\|^{\frac{1}{2}}\|\Psi_{t+1}^k\|^{\frac{1}{2}}o(1),
	\end{aligned}
	\end{equation}
	where $o(1)=\frac{\al_{t-1}}{\sqrt{\al_{t}}}\rightarrow0$ as $t\rightarrow\infty$.
	By (\ref{mat bound}) and \cite[Inequality (3.3.6) in Lemma 3.3.2]{chen2006stochastic}, the term on right-hand side of the last inequality of (\ref{mean con 1}) tends to $0$,  which implies  the second term on the right hand of (\ref{re2}) tends to 0 in probability.
	Therefore (\ref{re2}) tends to 0 in probability, which implies $\dfrac{\bigtriangleup_{k+1}^{'}}{\sqrt{\al_{k+1}}}$ and $\dfrac{\bigtriangleup_{k+1}^{''}}{\sqrt{\al_{k+1}}}$ have same limit distribution.

	Next, we focus on investigating the limit distribution of $\dfrac{\bigtriangleup_{k+1}^{''}}{\sqrt{\al_{k+1}}}$.	
	Denote
	\begin{equation*}
	y_k=\bigtriangleup_{k}^{''},\quad F_k=-G, \quad e_k=s_k^{'},\quad \nu_k=\zeta_k^{'}+\epsilon_k^{'}, \; \alpha_k=\alpha_k.
	\end{equation*}
	(\ref{recursion 5}) can be rewritten as
	\begin{equation*}
	y_{k+1}=y_k+\al_{k}F_ky_k+\al_{k}\left(e_k+\upsilon_k\right),
	\end{equation*}
	which is in the form (\ref{linear reccursion}) of Lemma \ref{lem:asym norm}. Then we may employ
	\cite[Theorem 3.3.1]{chen2006stochastic} to study the limit distribution of  $\dfrac{\bigtriangleup_{k+1}^{''}}{\sqrt{\al_{k+1}}}$. In what follows, we verify the conditions of Lemma \ref{lem:asym norm} in Appendix \ref{sec:SA}. By Assumption \ref{ass:a},
	\begin{equation*}
	\al_{k+1}^{-1}-\al_{k}^{-1}\rightarrow 0,
	\end{equation*}
	which implies condition (i) of Lemma \ref{lem:asym norm}.
	Note also that $F_k=-G$  is stable, condition (ii) of  Lemma \ref{lem:asym norm} holds. Then we focus on condition (iii) of  Lemma \ref{lem:asym norm}. On the one hand, we may show that $\nu_k=\epsilon_k^{'}+\zeta_k^{'}=o(\sqrt{\al_k})$ almost surely. In fact, for $\epsilon_k^{'}$, recall the definition of $\epsilon_k$ in (\ref{errors}).
	By  Lemma \ref{lem:con iden}, $\epsilon_k=0$ and then $\epsilon_k^{'}=(U^T)^{(r)}\epsilon_k=0$ almost surely when $k$ is large enough.
	For $\zeta_k^{'}$, when $k$ is large enough
	\begin{equation*}
	\begin{aligned}
	\|\zeta_k^{'}\|&=\left\|-\frac{1}{m}(U^T)^{(r)}P_B\left[\nabla f(\bar{x}_k)-\nabla f(x^*)-\nabla^2 f(x^*)(\bar{x}_k-x^*)\right]\right\|\\
	&\le \frac{1}{m}\left\|(U^T)^{(r)}P_B\right\|\left\|\nabla f(\bar{x}_k)-\nabla f(x^*)-\nabla^2 f(x^*)(\bar{x}_k-x^*)\right\|\\
	&\le \frac{c_0}{m}\left\|(U^T)^{(r)}P_B\right\|\left\|\bar{x}_k-x^*\right\|^2\\
	&=\frac{c_0}{m}\left\|(U^T)^{(r)}P_B\right\|\left\|\bigtriangleup_{k}\right\|^2=\frac{c_0}{m}\left\|(U^T)^{(r)}P_B\right\|o\left(\al_k^{2\delta}\right)\quad \text{a.s.},
	\end{aligned}
	\end{equation*}
	where the second inequality follows from (\ref{second order massage}) in Assumption \ref{ass:str conv} and  $\bar{x}_k\rightarrow x^*~$ almost surely, the second equality follows from Lemma \ref{lem:con iden} and the last equality follows from Theorem \ref{thm:con rate}. Therefore,
	\begin{equation*}
	\nu_k=\epsilon_k^{'}+\zeta_k^{'}=o\left(\al_k^{2\delta}\right)\le o(\sqrt{\al_k})\quad \text{a.s.}
	\end{equation*}
	as $\delta\in [1/4,1-{1}/{(2\al)})$.
	
	On the other hand, we verify (\ref{c1})-(\ref{c3}) of Lemma \ref{lem:asym norm} for the term $e_k=s_k^{'}$. By definition $s_k^{'}=(U^T)^{(r)}s_k$, it is easy to verify that
	\begin{equation}\label{sup}
	\mathbb{E}\left[s_k^{'}|\mathcal{F}_{k}\right]=0,~\sup_k\mathbb{E}\left[\|s_k^{'}\|^2|\mathcal{F}_{k}\right]\le\left\|(U^T)^{(r)}\right\|^2\sup_{k}\mathbb{E}[\|s_k\|^2|\mathcal{F}_{k}]\le\|P_B\|^2\left\|(U^T)^{(r)}\right\|^2 4L_0^2,
	\end{equation}
	and hence (\ref{c1}) of Lemma \ref{lem:asym norm} holds. By the definition of $s_k$,
	\begin{equation}
	\begin{aligned}\label{noise cov}
	&\mathbb{E}\left[s_ks_k^T\big|\mathcal{F}_{k}\right]
	=\mathbb{E}\left[\left(\dfrac{1}{m}P_B\sum_{j=1}^ms_{j,k}\right)\left(\dfrac{1}{m}P_B\sum_{j=1}^ms_{j,k}\right)^T\bigg|\mathcal{F}_{k}\right]\\
	=&\dfrac{1}{m^2}P_B\left(\sum_{1\leq i, \;j\leq m}\mathbb{E}\left[s_{j,k}(s_{j,k})^T\big|\mathcal{F}_{k}\right]\right)P_B\\
	=&\dfrac{1}{m^2}P_B\left(\sum_{1\leq i, \;j\leq m}\mathbb{E}\left[\left[\nabla F_i(x_{i, k};\xi_{i,k})-\nabla f_i(x_{i,k})\right]\left[\nabla F_j(x_{j,k};\xi_{j,k})-\nabla f_j(x_{j,k})\right]^T\big|\mathcal{F}_{k}\right]\right)P_B\\
	=&\dfrac{1}{m^2}P_B\left(\sum_{j=1}^m\mathbb{E}\left[\left[\nabla F_j(x_{j,k};\xi_{j, k})-\nabla f_j(x_{j,k})\right]\left[\nabla F_j(x_{j,k};\xi_{j,k})-\nabla f_j(x_{j,k})\right]^T\big|\mathcal{F}_{k}\right]\right)P_B\\
	=&\dfrac{1}{m^2}P_B\left(\sum_{j=1}^m\Cov(\nabla F_j(x;\xi_j))|_{x=x_{j,k}}\right)P_B,
	\end{aligned}
	\end{equation}
	where the fourth equality follows from that $\xi_{j,k}$ is independent of $\xi_{i,k}$ for any $i\neq j$, 
	$\Cov(\nabla F_j(x;\xi_j))|_{x=x_{j,k}}$
	means the value of covariance matrix $\Cov(\nabla F_j(x;\xi_j))$
	with respect to $\xi_j$ taking at the point $x=x_{j,k}$. Since
	for any $1\leq j\leq m$,
	$x_{j,k}\rightarrow x^*$ almost surely and the $\sum_{j=1}^m\Cov(\nabla F_j(\cdot;\xi_j))$ is continuous at  point $x^*$,
	\begin{equation}
	\begin{aligned}\label{con 1}
	\lim_{k\rightarrow\infty}\mathbb{E}\left[s_ks_k^T\big|\mathcal{F}_{k}\right]&=\lim_{k\rightarrow\infty}\dfrac{1}{m^2}P_B\left(\sum_{j=1}^m\Cov(\nabla F_j(x;\xi_j))|_{x=x_{j,k}}\right)P_B\\
	&=\dfrac{1}{m^2}P_B\left(\sum_{j=1}^m\Cov(\nabla F_j(x^*;\xi_j))\right)P_B\quad \text{a.s.}
	\end{aligned}
	\end{equation}
	Note that $\sup_{k}\mathbb{E}[\|s_k\|^2\big|\mathcal{F}_{k}]\le \|P_B\|^24L_0^2$.
	Then according to dominated convergence theorem,
	\begin{equation}\label{con 2} \lim_{k\rightarrow\infty}\mathbb{E}\left[s_ks_k^T\right]=\mathbb{E}\left[\lim_{k\rightarrow\infty}\mathbb{E}\left[s_ks_k^T\big|\mathcal{F}_{k}\right]\right]=\dfrac{1}{m^2}P_B\left(\sum_{j=1}^m\Cov(\nabla F_j(x^*;\xi_j))\right)P_B.
	\end{equation}
	Moreover, by the definition of $s_k^{'}$ and (\ref{con 1})-(\ref{con 2}), we have
	\begin{equation*}
	\begin{aligned}
	\lim_{k\rightarrow\infty}\mathbb{E}\left[s_k^{'}(s_k^{'})^T\big|\mathcal{F}_{k-1}\right]
	&=\lim_{k\rightarrow\infty}(U^T)^{(r)}\mathbb{E}\left[s_ks_k^T\big|\mathcal{F}_{k-1}\right](U^T)^{(r)T}\\
	&=\dfrac{1}{m^2}(U^T)^{(r)}P_B\left(\sum_{j=1}^m\Cov(\nabla F_j(x^*;\xi_j))\right)P_B(U^T)^{(r)T}\quad \text{a.s.},
	\end{aligned}
	\end{equation*}
	\begin{equation*}
	\lim_{k\rightarrow\infty}\mathbb{E}\left[s_k^{'}(s_k^{'})^T\right]
	=\dfrac{1}{m^2}(U^T)^{(r)}P_B\left(\sum_{j=1}^m\Cov(\nabla F_j(x^*;\xi_j))\right)P_B(U^T)^{(r)T},
	\end{equation*}
	which shows (\ref{c2}) in Lemma \ref{lem:asym norm}. 

	By Chebyshev's inequality and (\ref{sup})
	\begin{equation*}
	\P(\|s_k^{'}\|> N)\le \dfrac{ \mathbb{E}[\|s_k^{'}\|^2]}{N^2}\le\dfrac{\|P_B\|^2\left\|(U^T)^{(r)}\right\|^2 4L_0^2}{N^2}.
	\end{equation*}
	Furthermore, for $p>2$ given in Assumption \ref{ass:lip func}(ii) and $q>0$ such that $2/p+1/q=1$,
	\begin{equation*}
	\begin{aligned}
	\mathbb{E}\left[\|s_k^{'}\|^21_{\{\|s_k^{'}\|>N\}}\right]&\le\left(\mathbb{E}\left[\|s_k^{'}\|^{2(p/2)}\right]\right)^{2/p}\left(\mathbb{E}\left[1^q_{\{\|s_k^{'}\|>N\}}\right]\right)^{1/q}\\
	&=\left(\mathbb{E}\left[\left\|(U^T)^{(r)}P_B\frac{1}{m}\sum_{j=1}^{m} s_{j,k}\right\|^{p}\right]\right)^{2/p}\left(\P(\|s_k^{'}\|> N)\right)^{1/q}\\
	&\le \|(U^T)^{(r)}\|^2\|P_B\|^2\left(\frac{1}{m}\sum_{j=1}^{m}\mathbb{E}\left[\left\| s_{j,k}\right\|^{p}\right]\right)^{2/p}\left(\P(\|s_k^{'}\|> N)\right)^{1/q}\\
	&\le \|(U^T)^{(r)}\|^2\left\|P_B\right\|^2 4(L_0^p)^{2/p}\left(\P(\|s_k^{'}\|> N)\right)^{1/q}\\
	&\le\|(U^T)^{(r)}\|^{2+2/q}\left\|P_B\right\|^{2+2/q}\dfrac{16(L_0^p)^{2/p}(L_0^2)^{1/q}}{N^{2/q}},
	\end{aligned}
	\end{equation*}
	where   the first inequality follows from the $\rm{H\ddot{o}lder}$ inequality, the second inequality follows from the convexity of $\|\cdot\|^p$ and the third inequality  follows from (\ref{observe noise bound}). Then we have
	\begin{equation}\label{con 3}
	\lim_{N\rightarrow\infty}\sup_{k}\mathbb{E}\left[\|s_k^{'}\|^21_{\{\|s_k^{'}\|>N\}}\right]\le\lim_{N\rightarrow\infty}\|(U^T)^{(r)}\|^{2+2/q}\left\|P_B\right\|^{2+2/q}\dfrac{16(L_0^p)^{2/p}(L_0^2)^{1/q}}{N^{2/q}}=0,
	\end{equation}
	which verifies (\ref{c3}) in  Lemma \ref{lem:asym norm}. Therefore,  by  Lemma \ref{lem:asym norm}, 
	\begin{equation}
	\dfrac{\bigtriangleup_{k}^{''}}{\sqrt{\al_{k}}}\xrightarrow[k\rightarrow\infty]{d}N(0,\Sigma_1),
	\end{equation}
	where $\Sigma_1$ is defined in (\ref{def:Sigma_1}),
	and $(U^T)^{(r)}\in\mathbb{R}^{r\times d}$ is composed by first $r$ row vectors of $U^T$.
	
	Note that $\bigtriangleup_{k}=U\left((\bigtriangleup_{k}^{'})^T~\textbf{0}^T\right)^T$ and $\dfrac{\bigtriangleup_{k}^{'}}{\al_k}$ has the same limit  distribution with $\dfrac{\bigtriangleup_{k}^{''}}{\al_k}$.  Therefore,
	\begin{equation}
	\dfrac{\bigtriangleup_{k}}{\sqrt{\al_{k}}}\xrightarrow[k\rightarrow\infty]{d}N(0,\Sigma),
	\end{equation}
	where $\Sigma$ is defined in (\ref{def:Sigma}). Recall that Lemma \ref{lem:con iden} implies that $\bigtriangleup_{k}=\bar{x}_k-x^*$ when $k$ is large enough and hence
	\begin{equation*}
	\mathbb{E}\left[\dfrac{\|\bar{x}_k-x_{j,k}\|}{\sqrt{\al_{k}}}\right]=\mathcal{O}(\sqrt{\al_{k}})\rightarrow 0,  \quad \forall 1\leq j\leq m,
	\end{equation*}
	by Lemma \ref{lem:consis} and (\ref{consensus}). Therefore, an application of Slutsky's theorem yields (\ref{limit d}). The proof is completed.
\end{proof}

Theorem \ref{thm:asym norm} presents the the asymptotic normality of Algorithm \ref{alg:DDA} with the rate $\sqrt{\alpha_k}$.   Note that  $\al_k={a}{k^{-\al}},\al\in(2/3,1)$, the convergence given by (\ref{limit d}) implies that $\delta$ in the convergence rate $x_{j,k}-x^*=o(\al_k^\delta)$ cannot be improved to ${1}/{2}$.   Next, we employ the averaging technique introduced  in \cite{polyak1992acceleration} to
derive the asymptotic efficiency of  Algorithm \ref{alg:DDA}.

For simplicity, we present a technical result first.

\begin{lem}\label{lem:asym effi}
	Suppose Assumptions \ref{ass:sample}, \ref{ass:str conv}-\ref{ass:a} hold with $p>2$ in Assumption \ref{ass:lip func}(ii). Then
	\begin{equation}\label{bigtri}
	\frac{1}{\sqrt{k}}\sum_{t=1}^{k}\|\bigtriangleup_t\|^2\rightarrow 0   \quad   \text{a.s.},
	\end{equation}
	where  the projected error $\Delta_t$ is defined in (\ref{eq:deltat}).
\end{lem}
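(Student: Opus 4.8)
The plan is to reduce the Ces\`aro statement (\ref{bigtri}) to an almost-sure summability and then invoke Kronecker's lemma: since $\sqrt{k}\uparrow\infty$, it suffices to prove that $\sum_{t}\|\bigtriangleup_t\|^2/\sqrt{t}<\infty$ almost surely, for then Kronecker's lemma immediately yields $\frac{1}{\sqrt k}\sum_{t=1}^k\|\bigtriangleup_t\|^2\to 0$ a.s. To establish the summability I would prove the second-moment rate $\mathbb{E}[\|\bigtriangleup_k\|^2]=\mathcal{O}(\al_k)$. Combined with $\al_k=a\,k^{-\al}$ and $\al\in(2/3,1)\subset(1/2,1)$ this gives $\sum_t\mathbb{E}[\|\bigtriangleup_t\|^2]/\sqrt{t}=\sum_t\mathcal{O}(t^{-\al-1/2})<\infty$, so by Tonelli's theorem $\mathbb{E}\big[\sum_t\|\bigtriangleup_t\|^2/\sqrt{t}\big]<\infty$ and the series is finite a.s. I note that the pure almost-sure rate $\|\bigtriangleup_k\|=o(\al_k^\delta)$ from Theorem \ref{thm:con rate} is by itself insufficient here, since it only forces $\sum_t\|\bigtriangleup_t\|^2/\sqrt{t}<\infty$ when $\al\delta>1/4$, which cannot be arranged for $\al$ close to $2/3$; hence the $L^2$ rate is essential and will be the crux of the argument.

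To derive $\mathbb{E}[\|\bigtriangleup_k\|^2]=\mathcal{O}(\al_k)$ I would work from the recursion (\ref{recursion 0}). By Lemma \ref{lem:con iden} there is a finite (random) $K$ after which the active set is identified, so for $k\ge K$ one has $\epsilon_k=0$ and $\bigtriangleup_k=\bar{x}_k-x^*\in\mathcal{Y}$, and (\ref{recursion 0}) reduces to $\bigtriangleup_{k+1}=(I_d-\al_k H)\bigtriangleup_k+\al_k(\zeta_k+\eta_k+s_k)$ with every term lying in $\mathcal{Y}$. On $\mathcal{Y}$ Assumption \ref{ass:str conv}(iii) gives the coercivity $\langle\bigtriangleup_k,H\bigtriangleup_k\rangle\ge(\mu/m)\|\bigtriangleup_k\|^2$; the consensus bias satisfies $\mathbb{E}[\|\eta_k\|^2]=\mathcal{O}(\al_k^2)$ via (\ref{consensus}), (\ref{consensus rate 1}) and the Lipschitz continuity of $\nabla f_j$; the noise $s_k$ is an $\mathcal{F}_k$-martingale difference with $\mathbb{E}[\|s_k\|^2\mid\mathcal{F}_k]\le 4\|P_B\|^2L_0^2$; and the Taylor remainder obeys $\|\zeta_k\|\le(c_0/m)\|P_B\|\,\|\bigtriangleup_k\|^2$ by (\ref{second order massage}). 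Taking $\mathbb{E}[\,\cdot\mid\mathcal{F}_k]$ of $\|\bigtriangleup_{k+1}\|^2$, using that $\zeta_k,\eta_k$ are $\mathcal{F}_k$-measurable together with Young's inequality, I expect an estimate of the form $\mathbb{E}[\|\bigtriangleup_{k+1}\|^2\mid\mathcal{F}_k]\le\big(1-\tfrac{2\mu}{m}\al_k+2c\,\al_k\|\bigtriangleup_k\|+\mathcal{O}(\al_k^2)\big)\|\bigtriangleup_k\|^2+\mathcal{O}(\al_k^2)$ for a constant $c>0$.

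The main obstacle is precisely the quadratic remainder $\zeta_k$: the drift is genuinely contractive only where $\|\bigtriangleup_k\|$ is small enough that $2c\|\bigtriangleup_k\|\le\mu/m$, and although $\bigtriangleup_k\to 0$ almost surely (Theorem \ref{thm:convergence} with Lemma \ref{lem:con iden}), this occurs only after a \emph{random} time, which blocks a direct expectation bound. I would resolve this by localization. For fixed $N\ge K$ set the stopping time $\sigma_N=\inf\{k\ge N:2c\|\bigtriangleup_k\|>\mu/m\}$ (a stopping time because each $\bigtriangleup_k$ is $\mathcal{F}_k$-measurable), and put $w_k:=\mathbb{E}\big[\|\bigtriangleup_k\|^2\mathbf 1_{\{k<\sigma_N\}}\big]$. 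Since $\mathbf 1_{\{k<\sigma_N\}}$ is $\mathcal{F}_k$-measurable and nonincreasing in $k$, multiplying the conditional estimate by it and taking expectations yields $w_{k+1}\le(1-\tfrac{\mu}{m}\al_k+\mathcal{O}(\al_k^2))\,w_k+\mathcal{O}(\al_k^2)$, and a standard deterministic recursion (Chung-type) lemma then gives $w_k=\mathcal{O}(\al_k)$. As $E_N:=\{\sigma_N=\infty\}\subset\{k<\sigma_N\}$ for every $k\ge N$, this controls $\mathbb{E}[\|\bigtriangleup_k\|^2\mathbf 1_{E_N}]=\mathcal{O}(\al_k)$, whence $\mathbb{E}\big[\mathbf 1_{E_N}\sum_t\|\bigtriangleup_t\|^2/\sqrt{t}\big]<\infty$ and the series is finite a.s. on $E_N$. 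Finally $\bigtriangleup_k\to 0$ a.s. forces $\P(\bigcup_N E_N)=1$, so $\sum_t\|\bigtriangleup_t\|^2/\sqrt{t}<\infty$ almost surely, and Kronecker's lemma delivers (\ref{bigtri}).
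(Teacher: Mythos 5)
Your proposal is correct in substance and follows the same overall architecture as the paper's proof: reduce (\ref{bigtri}) via Kronecker's lemma to the almost sure summability of $\sum_t\|\bigtriangleup_t\|^2/\sqrt{t}$, establish a localized second-moment contraction on the event where identification holds and $\|\bigtriangleup_k\|$ is small, and remove the localization using almost sure convergence (your observation that the pathwise rate $o(\al_k^\delta)$ of Theorem \ref{thm:con rate} is insufficient for $\al$ near $2/3$ is precisely why the paper also passes through a localized $L^2$ bound rather than quoting Theorem \ref{thm:con rate}). The execution differs in two instructive ways. First, the contraction mechanism: you work from the linearized recursion (\ref{recursion 0}) and the coercivity $\langle y,Hy\rangle\ge(\mu/m)\|y\|^2$ on $\mathcal{Y}$, absorbing the Taylor remainder $\zeta_k$ inside the localization radius $\mu/(2cm)$; the paper instead works from the identity $\bar{x}_k=\bar{x}_{k-1}+P_B(\bar z_{k-2}-\bar z_{k-1})$ and obtains the negative drift directly from function values via the restricted strong convexity inequality (\ref{restricted-strong-convexity}), never invoking the second-order expansion (\ref{second order massage}) in this lemma. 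Second, the localization bookkeeping: you run a single stopping time from a fixed start $N$ and take a union over $N$, whereas the paper uses sliding-window events $\Upsilon_{[k/2],k}$, iterates only over the half-window, and pays $\mathbb{E}[\|\bigtriangleup_{[k/2]}\|^2]\exp(-D_2k^{1-\al})$ as an initial condition — which forces it to separately prove $\sup_k\mathbb{E}[\|\bigtriangleup_k\|^2]<\infty$ via $\sup_k\mathbb{E}[R_k]<\infty$. Your scheme avoids that step entirely, since on $\{k<\sigma_N\}$ the localized moment is bounded by $\left(\mu/(2cm)\right)^2$ by construction; this is a genuine simplification.

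One repair is needed before the argument is airtight: ``fix $N\ge K$'' is not a legal choice of index since $K$ is random, and on $\{k<\sigma_N\}$ alone the clean form of (\ref{recursion 0}) — that is, $\epsilon_k=0$ and $\bigtriangleup_k=\bar x_k-x^*$ — need not hold at steps where identification has not yet occurred (identification prior to the random time $K$ may be intermittent). The fix is to build the identification constraints into the localization for deterministic $N$, e.g. redefine $\sigma_N=\inf\{k\ge N: 2c\|\bigtriangleup_k\|>\mu/m \text{ or } B\bar x_k\neq b \text{ or } C\bar x_k\not<c\}$ — exactly the role of the conditions $B\bar{x}_j=b$, $C\bar{x}_j<c$ inside the paper's event $\Upsilon_{l,k}$ — and then observe that $\P\bigl(\bigcup_N\{\sigma_N=\infty\}\bigr)=1$ by Lemma \ref{lem:con iden} combined with $\bigtriangleup_k\to0$ a.s. With that amendment, your stopping-time version of the argument goes through and delivers (\ref{bigtri}).
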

The proof is presented  in Appendix \ref{proof:lem:asym effi}.

\begin{thm}\label{asymptotic efficient}
	Suppose Assumptions \ref{ass:sample}, \ref{ass:str conv}-\ref{ass:a} hold with $p>2$ in Assumption \ref{ass:lip func}(ii). Let $x^*$ be  the limit point of sequence $\{\bar x_{k}\}$. The covariance matrix 	mapping
	$\sum_{j=1}^m\Cov(\nabla F_j(\cdot; \xi_j))$ is continuous  at point  $x^*$. Then for any $1\leq j\leq m$,
	\begin{equation}\label{limi d 1}
	\frac{1}{\sqrt{k}}\sum_{t=1}^k(x_{j,t}-x^*)\stackrel{d}{\longrightarrow}N(0,\Sigma^*),
	\end{equation}
	where
	\begin{equation*}
	\Sigma^*= H^\dag P_B\bar{\Sigma} P_BH^\dag ,\quad\bar{\Sigma}=\dfrac{1}{m^2}\sum_{j=1}^m\Cov(\nabla F_j(x^*;\xi_j)),
	\end{equation*}
	$H^\dag$ is the  Moore-Penrose inverse of $H$,
	$P_B$ and $H$ are defined in (\ref{eq:pb}) and (\ref{eq:h}) respectively.
\end{thm}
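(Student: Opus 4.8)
The plan is to run the Polyak--Juditsky averaging argument on the restricted recursion (\ref{recursion 0}). First I would invoke Lemma \ref{lem:con iden} to work in the regime $k\ge K$, where the active set has been identified, so that $\bigtriangleup_k=\bar{x}_k-x^*$ and $\epsilon_k=0$ almost surely; the finitely many earlier terms are harmless after dividing by $\sqrt{k}$. Since $\bigtriangleup_k\in\mathcal{Y}$ and $H=\tfrac1m P_B\nabla^2 f(x^*)P_B$ is symmetric, vanishes on $\mathcal{Y}^{\perp}$, and is positive definite on $\mathcal{Y}$ by Assumption \ref{ass:str conv}(iii), its range is exactly $\mathcal{Y}$, so $H^\dag H\bigtriangleup_k=\bigtriangleup_k$. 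Solving (\ref{recursion 0}) for the linear part and applying $H^\dag$ gives $\bigtriangleup_k=H^\dag\big(\tfrac{\bigtriangleup_k-\bigtriangleup_{k+1}}{\al_k}+\zeta_k+\eta_k+s_k+\epsilon_k\big)$. Summing over $t=1,\dots,k$, dividing by $\sqrt{k}$, and pulling out the continuous map $H^\dag$ reduces the claim to analysing one telescoping term and the four averaged error terms.

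The dominant contribution is $H^\dag\,\tfrac1{\sqrt k}\sum_{t=1}^{k}s_t$, where $s_t=-\tfrac1m\sum_{j}P_Bs_{j,t}$ is a martingale difference with respect to $\{\mathcal{F}_{t+1}\}$. I would establish its asymptotic normality by a martingale central limit theorem: the conditional covariances $\E[s_ts_t^T\mid\mathcal{F}_t]$ converge almost surely to $P_B\bar{\Sigma}P_B$ exactly as computed in (\ref{noise cov})--(\ref{con 1}), and the Lindeberg condition follows from the uniform-integrability estimate (\ref{con 3}), which used $p>2$ in Assumption \ref{ass:lip func}(ii). Hence $\tfrac1{\sqrt k}\sum_{t=1}^{k}s_t\overset{d}{\to}N(0,P_B\bar{\Sigma}P_B)$, and applying $H^\dag$ (symmetric) yields $N(0,H^\dag P_B\bar{\Sigma}P_BH^\dag)=N(0,\Sigma^*)$, the target covariance.

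It then remains to show the other terms are negligible in probability. The term $\tfrac1{\sqrt k}\sum_t\zeta_t$ is controlled through $\|\zeta_t\|\le\tfrac{c_0}{m}\|P_B\|\,\|\bigtriangleup_t\|^2$ (from (\ref{second order massage}) and (\ref{errors})) together with Lemma \ref{lem:asym effi}, which gives $\tfrac1{\sqrt k}\sum_t\|\bigtriangleup_t\|^2\to0$ a.s. For $\tfrac1{\sqrt k}\sum_t\eta_t$ I would use $\E[\|\eta_t\|]=\mathcal{O}(\al_{t-1})$, coming from (\ref{consensus}) and the consensus rate (\ref{consensus rate 1}), so that $\tfrac1{\sqrt k}\sum_{t=1}^{k}\al_{t-1}=\mathcal{O}(k^{1/2-\al})\to0$ because $\al>1/2$; the term $\tfrac1{\sqrt k}\sum_t\epsilon_t$ vanishes since $\epsilon_t=0$ for $t\ge K$. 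The delicate piece is the telescoping term $\tfrac1{\sqrt k}\sum_{t=1}^{k}\tfrac{\bigtriangleup_t-\bigtriangleup_{t+1}}{\al_t}$: by summation by parts it equals $\tfrac1{\sqrt k}\big(\tfrac{\bigtriangleup_1}{\al_1}-\tfrac{\bigtriangleup_{k+1}}{\al_k}+\sum_{t=2}^{k}\bigtriangleup_t(\tfrac1{\al_t}-\tfrac1{\al_{t-1}})\big)$, where the boundary term $\tfrac{\bigtriangleup_{k+1}}{\al_k\sqrt k}$ is $\mathcal{O}_p(k^{(\al-1)/2})\to0$ because $\bigtriangleup_{k+1}=\mathcal{O}_p(\sqrt{\al_{k+1}})$ by Theorem \ref{thm:asym norm}, and the weighted sum is handled using $\tfrac1{\al_t}-\tfrac1{\al_{t-1}}=\mathcal{O}(t^{\al-1})$ against the pointwise mean-square rate $\E[\|\bigtriangleup_t\|^2]=\mathcal{O}(\al_t)$ that underlies Lemma \ref{lem:asym effi}.

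Combining these steps gives $\tfrac1{\sqrt k}\sum_{t=1}^{k}\bigtriangleup_t\overset{d}{\to}N(0,\Sigma^*)$. To pass to the individual agents I would write $x_{j,t}-x^*=(x_{j,t}-\bar{x}_t)+(\bar{x}_t-x^*)$, bound $\tfrac1{\sqrt k}\sum_t\|x_{j,t}-\bar{x}_t\|$ by (\ref{consensus}) and the consensus rate as $\mathcal{O}(k^{1/2-\al})\to0$, and use $\bar{x}_t-x^*=\bigtriangleup_t$ for $t\ge K$; Slutsky's theorem then yields (\ref{limi d 1}). I expect the main obstacle to be precisely the telescoping term, which is the heart of the averaging method: the almost sure rate of Theorem \ref{thm:con rate} is too weak for the boundary and weighted-sum estimates, so the argument must instead lean on the $\mathcal{O}_p$ bound of Theorem \ref{thm:asym norm} and on the pointwise mean-square rate $\E[\|\bigtriangleup_t\|^2]=\mathcal{O}(\al_t)$ rather than on the Ces\`aro statement of Lemma \ref{lem:asym effi} alone.
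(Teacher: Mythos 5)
Your proposal is correct in its overall architecture, but it takes a genuinely different route from the paper at the decisive step. The paper does \emph{not} carry out the Polyak--Juditsky averaging argument itself: after reducing the claim (via Lemma \ref{lem:con iden}, the consensus estimate (\ref{ine err}) and Kronecker's lemma, exactly as you do) to showing $\frac{1}{\sqrt{k}}\sum_{t=1}^k\bigtriangleup_t\stackrel{d}{\rightarrow}N(0,\Sigma^*)$, it rewrites (\ref{recursion 0}) in the form of \cite[(34)]{duchi2016asymptotic} and invokes \cite[Proposition 2]{duchi2016asymptotic}, verifying only that proposition's Assumption F (the martingale CLT for $\frac{1}{\sqrt{k}}\sum_t s_t$, by the same covariance computation (\ref{noise cov})--(\ref{con 2}) and the uniform-integrability estimate analogous to (\ref{con 3}) that you cite) and Assumption G (the Ces\`aro bound $\frac{1}{\sqrt{k}}\sum_t\|P_B\zeta_t^{'}\|\rightarrow 0$ via Lemma \ref{lem:asym effi}, the consensus rate, and $\epsilon_t=0$ eventually). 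Your plan re-proves the cited proposition inline: the identity $H^\dag H\bigtriangleup_k=\bigtriangleup_k$ (correct, since $H$ is symmetric with range exactly $\mathcal{Y}$ by Assumption \ref{ass:str conv}(iii)), the summation by parts on the telescoping term, and the boundary estimate $\|\bigtriangleup_{k+1}\|/(\al_k\sqrt{k})=\mathcal{O}_p(k^{(\al-1)/2})$ via the tightness supplied by Theorem \ref{thm:asym norm} are precisely what the citation absorbs. This buys self-containedness at the cost of redoing the hardest bookkeeping; the paper buys brevity at the cost of importing the external result.

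One supporting claim, however, is not available as you state it. The unconditional mean-square rate $\E[\|\bigtriangleup_t\|^2]=\mathcal{O}(\al_t)$ does not ``underlie Lemma \ref{lem:asym effi}'': the recursion (\ref{espacial expression of algorithm}) and the restricted strong convexity are only operative after the random identification time $K$ and inside the $\epsilon$-ball of (\ref{second order massage}), which is exactly why the proof of that lemma works with the localized quantity $\E[\|\bigtriangleup_t\|^21_{\Upsilon_{[t/2],t}}]=\mathcal{O}(\al_t\log t)$ on events of probability exceeding $1-a$, and unconditionally establishes only $\sup_k\E[\|\bigtriangleup_k\|^2]<\infty$. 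Consequently your weighted sum $\frac{1}{\sqrt{k}}\sum_{t=2}^k(\al_t^{-1}-\al_{t-1}^{-1})\bigtriangleup_t$ cannot lean on a pointwise moment bound that the paper never proves; you must run the same localization device: restrict to $\{\sup_{t\geq 2k_0}\|\bigtriangleup_t\|<\epsilon,\;K<k_0\}$, use $\E[\|\bigtriangleup_t\|1_{\Upsilon_{[t/2],t}}]=\mathcal{O}(\sqrt{\al_t\log t})$ so that $\frac{1}{\sqrt{k}}\sum_{t=2}^k t^{\al-1}\sqrt{\al_t\log t}=\mathcal{O}\big(k^{(\al-1)/2}\sqrt{\log k}\big)\rightarrow 0$, and then let $a\downarrow 0$. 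With that patch your argument closes, and your own diagnosis is accurate on the other front: the almost sure rate of Theorem \ref{thm:con rate} is indeed just too weak here, since the weighted sum would require $\delta>1-1/(2\al)$ while the theorem only provides $\delta<1-1/(2\al)$.
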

\begin{proof}
	Lemma \ref{lem:con iden} has shown that  $\bigtriangleup_{k}=\bar{x}_k-x^*$ almost surely when $k$ is large enough.
	Then
	\begin{equation*}
	\frac{1}{\sqrt{k}}\sum_{t=1}^k\big(\bar{x}_{t}-x^*\big)\quad {and}  \quad \frac{1}{\sqrt{k}}\sum_{t=1}^k\bigtriangleup_t
	\end{equation*}
	have the same limit distribution.
	Note also that, for any $1\leq j\leq m$,
	\begin{equation}\label{ine err}
	\begin{aligned} &\mathbb{E}\left\|\frac{1}{\sqrt{k}}\sum_{t=1}^k\big(x_{j,t}-x^*\big)-\frac{1}{\sqrt{k}}\sum_{t=1}^k\big(\bar{x}_{t}-x^*\big)\right\|=\mathbb{E}\left\|\frac{1}{\sqrt{k}}\sum_{t=1}^k\big(x_{j,t}-\bar{x}_{t}\big)\right\|\\
	\leq&\frac{1}{\sqrt{k}}\sum_{t=1}^k\mathbb{E}\|x_{j,t}-\bar{x}_{t}\|
	\overset{(\ref{consensus})}{\leq}\frac{1}{\sqrt{k}}\sum_{t=1}^k\mathbb{E}\|z_{j,t-1}-\bar{z}_{t-1}\|\\
	\leq &\frac{\mathbb{E}\|z_{j,0}-\bar{z}_{0}\|}{\sqrt{k}}+\frac{1}{\sqrt{k-1}}\sum_{t=1}^{k-1}\mathbb{E}\|z_{j,t}-\bar{z}_{t}\|\\
	\leq &\frac{\mathbb{E}\|z_{j,0}-\bar{z}_{0}\|}{\sqrt{k}}+\frac{c}{\sqrt{k-1}}\sum_{t=1}^{k-1}\al_t,
	\end{aligned}
	\end{equation}
	where the last inequality follows from (\ref{consensus rate 1}) and $c>0$ is a constant.
	In addition, by the Kronecker lemma and the fact $\sum_{t=1}^\infty\frac{1}{\sqrt{t}}\al_t=\sum_{t=1}^k\dfrac{a}{t^{1/2+\al}}<\infty$,  $\frac{1}{\sqrt{k}}\sum_{t=1}^{k}\al_t\rightarrow 0$. Thus, it follows from (\ref{ine err}) that, for any $1\leq j\leq m$,
	$
	\frac{1}{\sqrt{k}}\sum_{t=1}^k\big(x_{j,t}-x^*\big)\quad  \text{and} \quad \frac{1}{\sqrt{k}}\sum_{t=1}^k\big(\bar{x}_{t}-x^*\big)
	$
	have the same limit distribution. Therefore, it is sufficient to show that
	\begin{equation}
	\label{eq:anormal-delta}
	\frac{1}{\sqrt{k}}\sum_{t=1}^k\bigtriangleup_t\stackrel{d}{\longrightarrow}N(0,\Sigma^*).
	\end{equation}

	In what follows, we employ    \cite[Proposition 2]{duchi2016asymptotic}     to prove (\ref{eq:anormal-delta}).
	Recall   $ \bigtriangleup_{k+1}$  in  (\ref{recursion 0}) of Lemma  \ref{lem:linear recur}:
	\begin{equation}
	\begin{aligned}\label{recursion 2}
	\bigtriangleup_{k+1}&=\bigtriangleup_k-\alpha_{k}H\bigtriangleup_k+\al_k\left(\zeta_k+\eta_k+s_k+\epsilon_k\right)\\
	&=\bigtriangleup_k-\alpha_{k} P_BHP_B\bigtriangleup_k+\al_kP_B\left(\zeta_k+\eta_k+s_k\right)+\al_k\epsilon_k,
	\end{aligned}
	\end{equation}
	where the second equality follows from the   (\ref{eq:h})  and the fact that $\zeta_k,\eta_k, s_k$  defined in (\ref{errors})
	are all in subspace $\mathcal{Y}$. With a slight abuse of notation, define
	\begin{equation}
	\label{eq: zetap}
	\zeta_k^{'}\define\zeta_k+\eta_k, \quad \epsilon_k^{'}\define\al_k\epsilon_k.
	\end{equation}
	Identifying $P_B$, $s_k$, $\zeta_k^{'}$, and $\epsilon_k^{'}$ to $P_\mathcal{T}$, $\xi_k$, $\zeta_k$, and $\varepsilon_k$, respectively, then  (\ref{recursion 2})  falls into  the  form \cite[(34)]{duchi2016asymptotic}.  We are left to verify
	Assumptions F and G of  \cite[Proposition 2]{duchi2016asymptotic}.
	
	Firstly,  by the definition of $H$ in (\ref{eq:h})
	\begin{equation*}
	y^THy=(P_B y)^T\left(\dfrac{1}{m}\nabla^2 f(x^*) \right)P_By=y^T\left(\dfrac{1}{m}\nabla^2 f(x^*) \right)y\ge \dfrac{\mu}{m}\|y\|^2,   \;\; \forall y\in{\mathcal{Y}},
	\end{equation*}
	where the  inequality  follows from  Assumption \ref{ass:str conv}. 	
	Secondly, $\{s_k, \mathcal{F}_{k+1}\}$ is a martingale difference sequence and
	\begin{equation}\label{cond_2}
	\mathbb{E}\left[\|s_k\|^2\big|\mathcal{F}_{k}\right]=\mathbb{E}\left[\left\|\dfrac{1}{m}\sum_{j=1}^ms_{j,k}\right\|^2\big|\mathcal{F}_{k}\right]\le\dfrac{1}{m}\sum_{j=1}^m\mathbb{E}\left[\|s_{j,k}\|^2\big|\mathcal{F}_{k}\right]\le 4L_0^2.
	\end{equation}
	
	For validation of \cite[Assumption F]{duchi2016asymptotic}, we may employ \cite[Lemma 3.3.1]{chen2006stochastic} to prove that
	\begin{equation*}
	\dfrac{1}{\sqrt{k}}\sum_{t=1}^ks_t\overset{d}{\longrightarrow} \mathcal{N}(0,\bar{\Sigma}).
	\end{equation*}
	In fact, since $\{s_k, \mathcal{F}_{k+1}\}$ is a martingale difference sequence satisfying (\ref{cond_2}) and (\ref{noise cov})-(\ref{con 2}), and also the fact
	\begin{equation*}\label{con 4}
	\lim_{N\rightarrow\infty}\sup_{k}\mathbb{E}\left[\|s_k\|^21_{\{\|s_k\|>N\}}\right]\le\lim_{N\rightarrow\infty}\left\|P_B\right\|^{2+2/q}\dfrac{16(L_0^p)^{2/p}(L_0^2)^{1/q}}{N^{2/q}}=0,
	\end{equation*}
	which is similar to the analysis of (\ref{con 3}), then identifying $s_i/\sqrt{k}$ to $\xi_{k,i}$ in \cite[Lemma 3.3.1]{chen2006stochastic}, we can derive the desired argument.
	
	Next, we verify \cite[Assumption G]{duchi2016asymptotic}.
	Recall   (\ref{errors}) and (\ref{eq: zetap}), we have
	\begin{equation*}
	\zeta_{t}^{'}=\zeta_k+\eta_k=-\frac{1}{m}P_B\big[\nabla f(\bar{x}_k)-\nabla f(x^*)-\nabla^2 f(x^*)(\bar{x}_k-x^*)\big]+\frac{1}{m}\sum_{j=1}^mP_B(\nabla f_j(x_{j,k})-\nabla f_j(\bar{x}_k)).
	\end{equation*}
	Then
	\begin{equation}
	\begin{aligned}\label{error ine}
	&\frac{1}{\sqrt{k}}\sum_{t=1}^{k}\|P_B\zeta_{t}^{'}\|\leq  \frac{1}{\sqrt{k}}\sum_{t=1}^{k}\|\zeta_t\|
	+\frac{1}{m\sqrt{k}}\|P_B\|\sum_{t=1}^{k}\sum_{j=1}^{m}\|\nabla f_j(x_{j,t})-\nabla f_j(\bar{x}_{t})\|\\
	\leq& \frac{1}{\sqrt{k}}\sum_{t=1}^{k}
	\|\zeta_t\|1_{\{\|\bar{x}_{t}-x^*\|>\epsilon\}}
	+\frac{1}{m\sqrt{k}}\|P_B\|\sum_{t=1}^{k}\|\bar{x}_{t}-x^*\|^2
	+\frac{L\|P_B\|}{m\sqrt{k}}\sum_{t=1}^{k}\sum_{j=1}^{m}\|x_{j,t}-\bar{x}_{t}\|\\
	\leq&\frac{1}{\sqrt{k}}\sum_{t=1}^{k}
	\|\zeta_t\|1_{\{\|\bar{x}_{t}-x^*\|>\epsilon\}}
	+\frac{1}{m\sqrt{k}}\|P_B\|\sum_{t=1}^{k}\|\bar{x}_{t}-x^*\|^2
	+\frac{L\|P_B\|}{m\sqrt{k}}\sum_{t=1}^{k}\sum_{j=1}^{m}\|z_{j,t-1}-\bar{z}_{t-1}\|,
	\end{aligned}
	\end{equation}
	where the second inequality follows from (\ref{second order massage}) in Assumption \ref{ass:str conv} and the Lipschitz continuity of $\nabla f_j(\cdot)$ in Assumption \ref{ass:str conv}, the third inequality follows from (\ref{consensus}).
	
	We need to show that all terms on the right-hand side of inequality (\ref{error ine}) converge to $0$ almost surely. Evidently, the first term on the right-hand side of inequality (\ref{error ine}) converge to $0$ almost surely as $\bar{x}_{t} \rightarrow x^*$.  Note that
	$\bigtriangleup_t=\bar{x}_{t}-x^*$ when $t$ is large enough, and hence
	the second term converges to $0$ almost surely by Lemma \ref{lem:asym effi}, while the third term converges to $0$ in probability by (\ref{ine err}). Therefore,
	\begin{equation*}
	\frac{1}{\sqrt{k}}\sum_{t=1}^{k}\|P_B\zeta_{t}^{'}\|\rightarrow 0\quad
	\text{a.s.}
	\end{equation*}
	Note also that  by Lemma \ref{lem:con iden},  $\epsilon_k^{'}=\al_k\epsilon_k=0$ when $k$ is large enough and Lemma \ref{lem:asym effi} implies
	\begin{equation*}
	\frac{1}{\sqrt{k}}\sum_{t=1}^{k}\|\bar{x}_{t}-x^*\|^2\rightarrow 0,
	\end{equation*}
	hence \cite[Assumption G]{duchi2016asymptotic} holds. Then an application of \cite[Proposition 2]{duchi2016asymptotic} yields (\ref{eq:anormal-delta}). The proof is completed.
\end{proof}

\section{Numerical simulation}

In this section, we give a numerical example to justify the theoretical analysis. We carry out simulations on the distributed parameter estimation problem \cite{lei2018asymptotic,towfic2015stability}.
Over a connected network consisting of $m$ agents, we want to estimate a real vector $x^*$ in a distributed manner. Each agent $j = 1,\cdots,m$ at time $k$ has access to its real scalar measurement $d_{j,k}$ given by the following linear time-varying model
\begin{equation*}
d_{j,k}=u_{j,k}^Tx^*+v_{j,k},
\end{equation*}
where $u_{j,k}\in\mathbb{R}^d$ is the regression vector accessible to agent $j$, and $v_{j,k}$ is the observation noise of agent $j$. Assume that $\{u_{j,k}\}$ and $\{v_{j,k}\}$ are mutually independent i.i.d. Gaussian sequences with distributions $\mathcal{N}(0,R_{u,j})$ and $\mathcal{N}(0,\sigma^2_{v,j})$ respectively. Then the problem can be reformulated as follows:
\begin{equation}\label{numerical example}
\min\limits_{x\in \mathbb{R}^d} ~f(x)=\sum_{j=1}^mf_j(x)\quad\st ~x\in\mathcal{X},
\end{equation}
where each agent's cost function
\begin{equation*}
f_j(x)\define\mathbb{E}\big[(u_{j,k}^Tx-d_{j,k})^2\big]=(x-x^*)^TR_{u,j}(x-x^*)+\sigma^2_{v,j}.
\end{equation*}

\begin{figure}[http]
	\centering
	\subfigure[Trajectories of $x_{j,k}$  and $\sum_{j=1}^{50}x_{j,k}/50$]{
		\begin{minipage}[t]{0.4\textwidth}
			\centering
			\includegraphics[width=6cm]{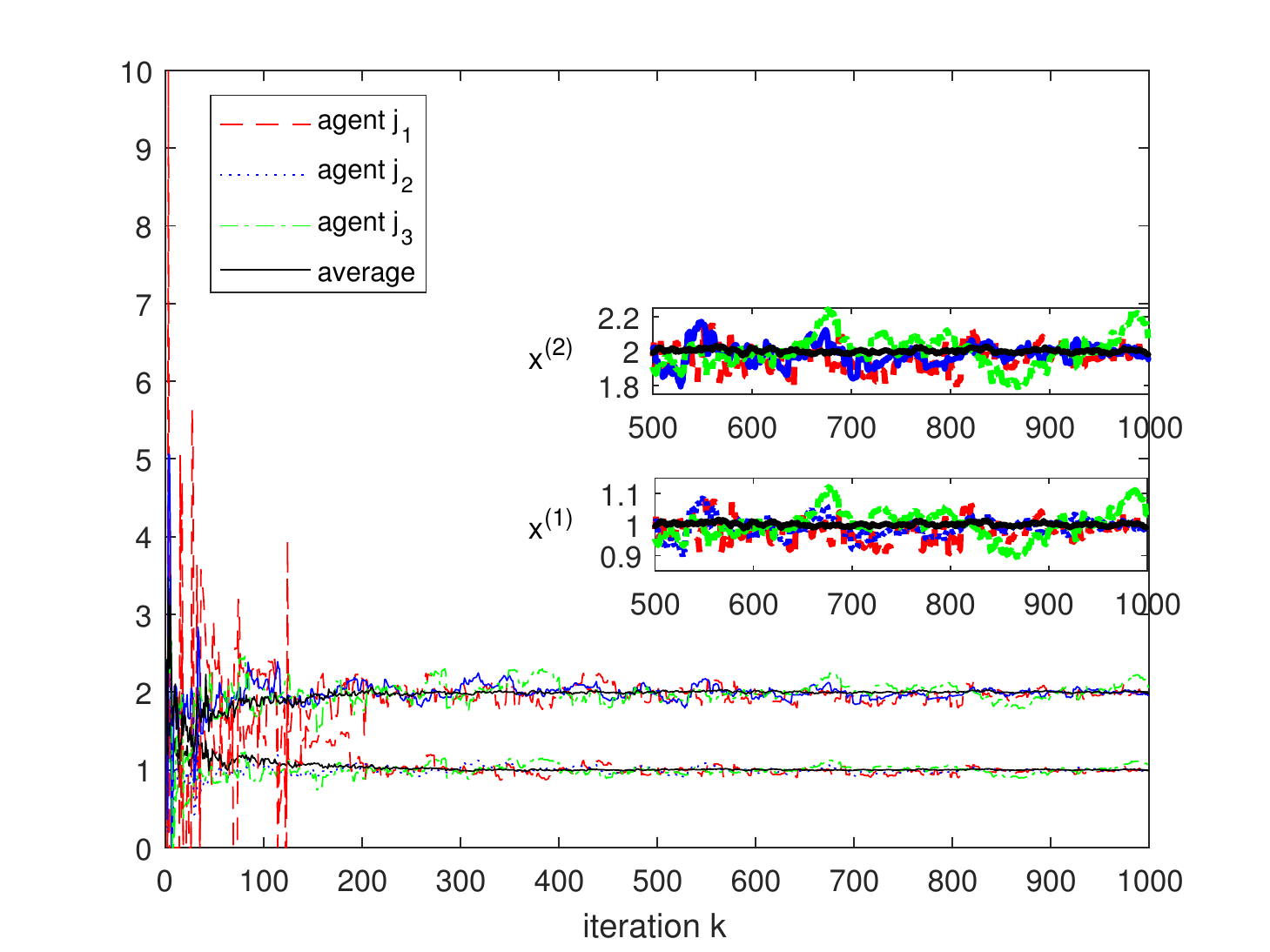}
	\end{minipage}}
	\subfigure[Active set identification of $\bar x_k$]{
		\begin{minipage}[t]{0.4\textwidth}
			\centering
			\includegraphics[width=5.9cm]{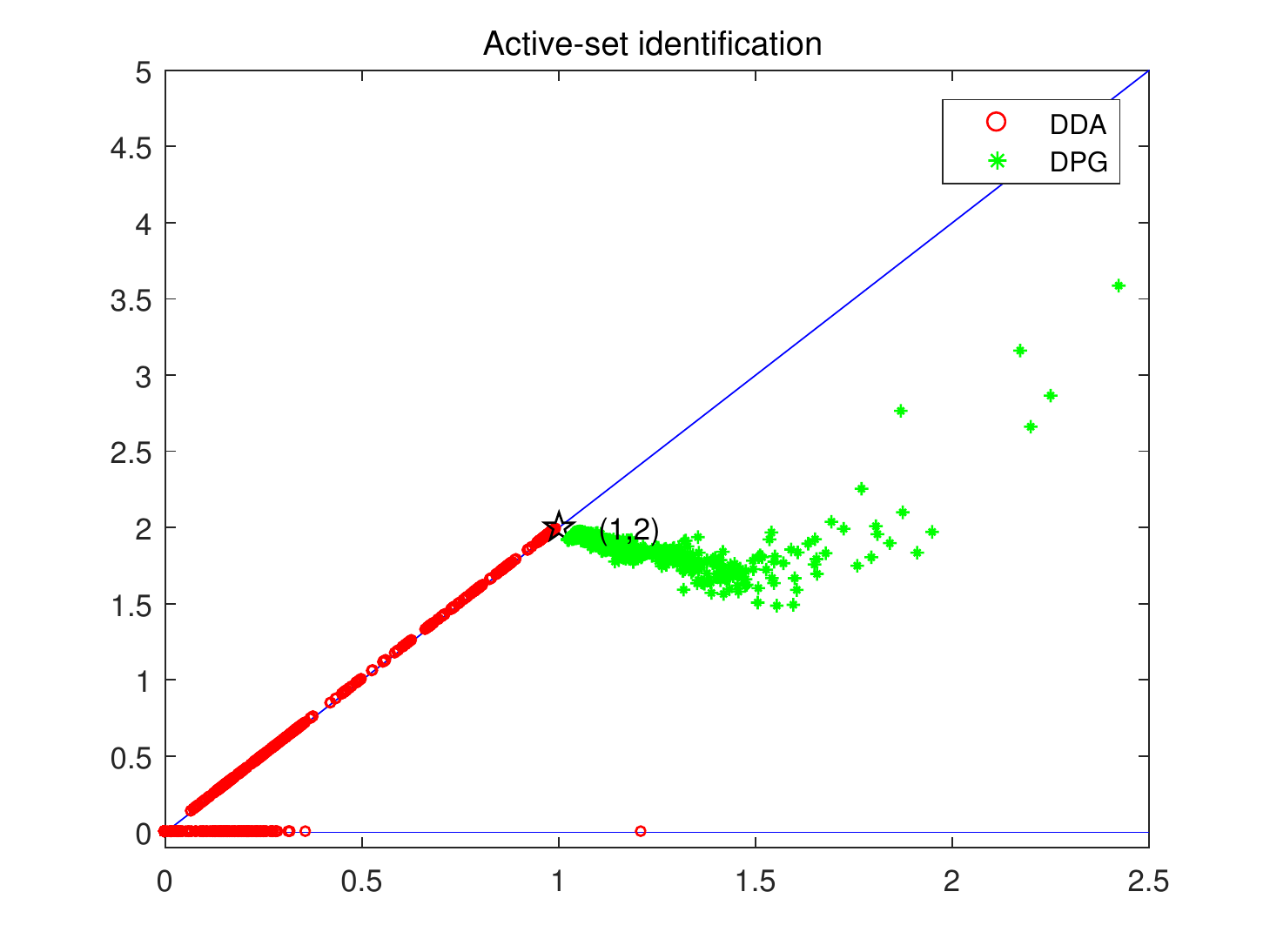}
	\end{minipage}}
	\caption{Convergence properties of selected agents' estimates $x_{j,k}$, $\sum_{j=1}^{50}x_{j,k}/50$, and $\bar x_k$. }
	\label{fig:convergence}
\end{figure}	
\begin{figure}[http]
	\centering
	\subfigure[$\dfrac{x_{1,k}-x^*}{\sqrt{\al_{k}}}$]{
		\begin{minipage}[t]{0.4\textwidth}
			\centering
			\includegraphics[width=6cm]{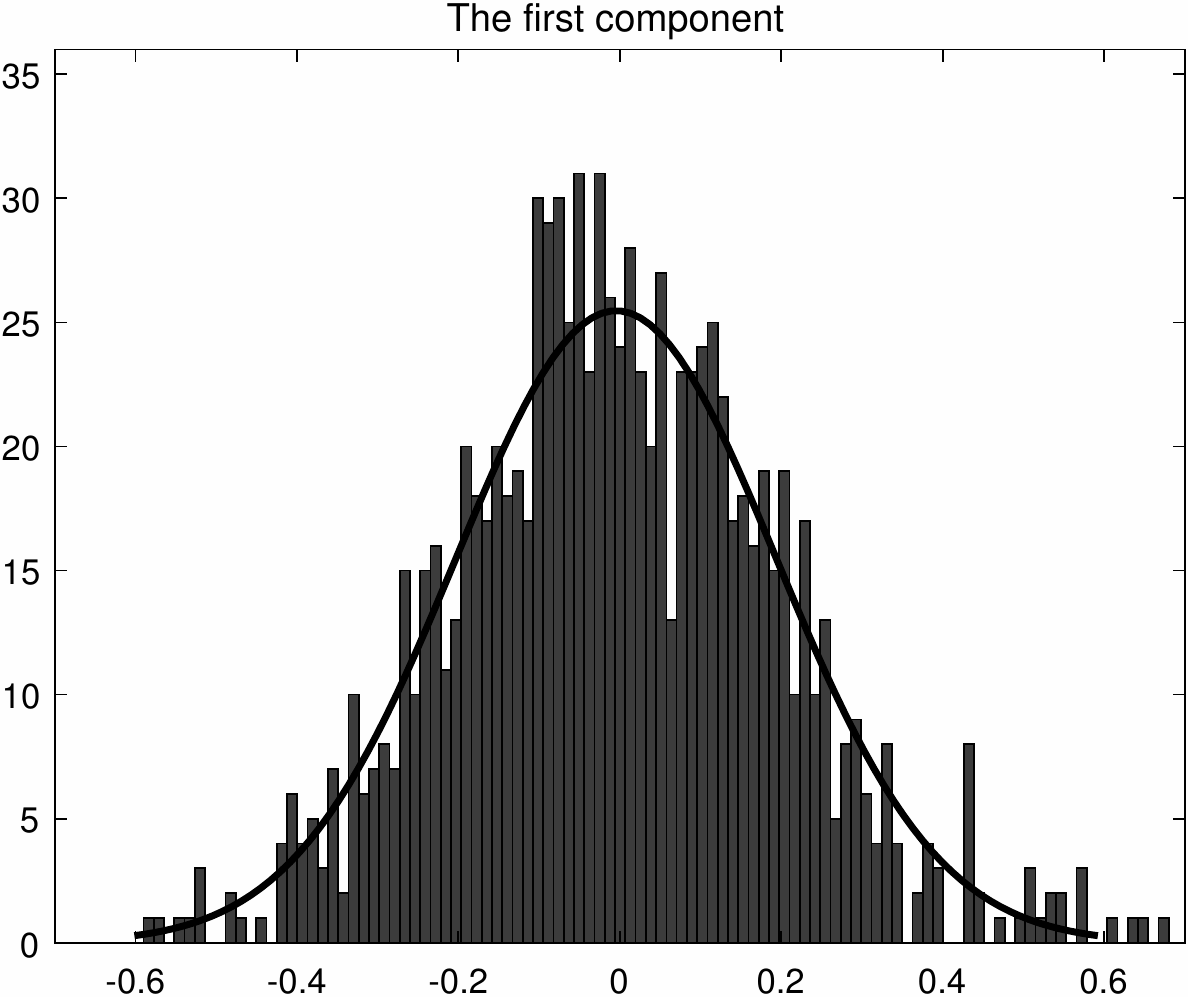}
			\includegraphics[width=6cm]{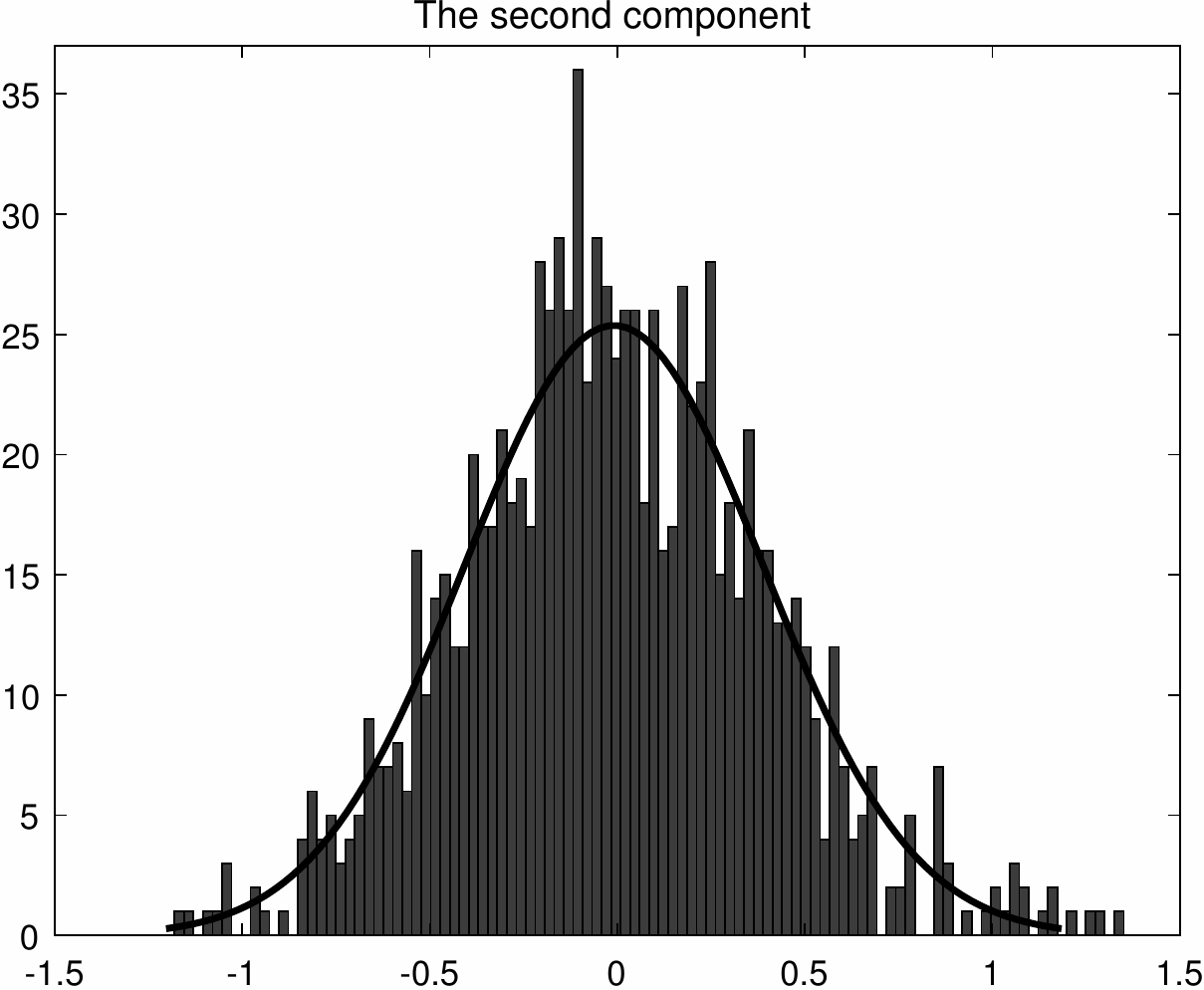}	
			\includegraphics[width=6cm]{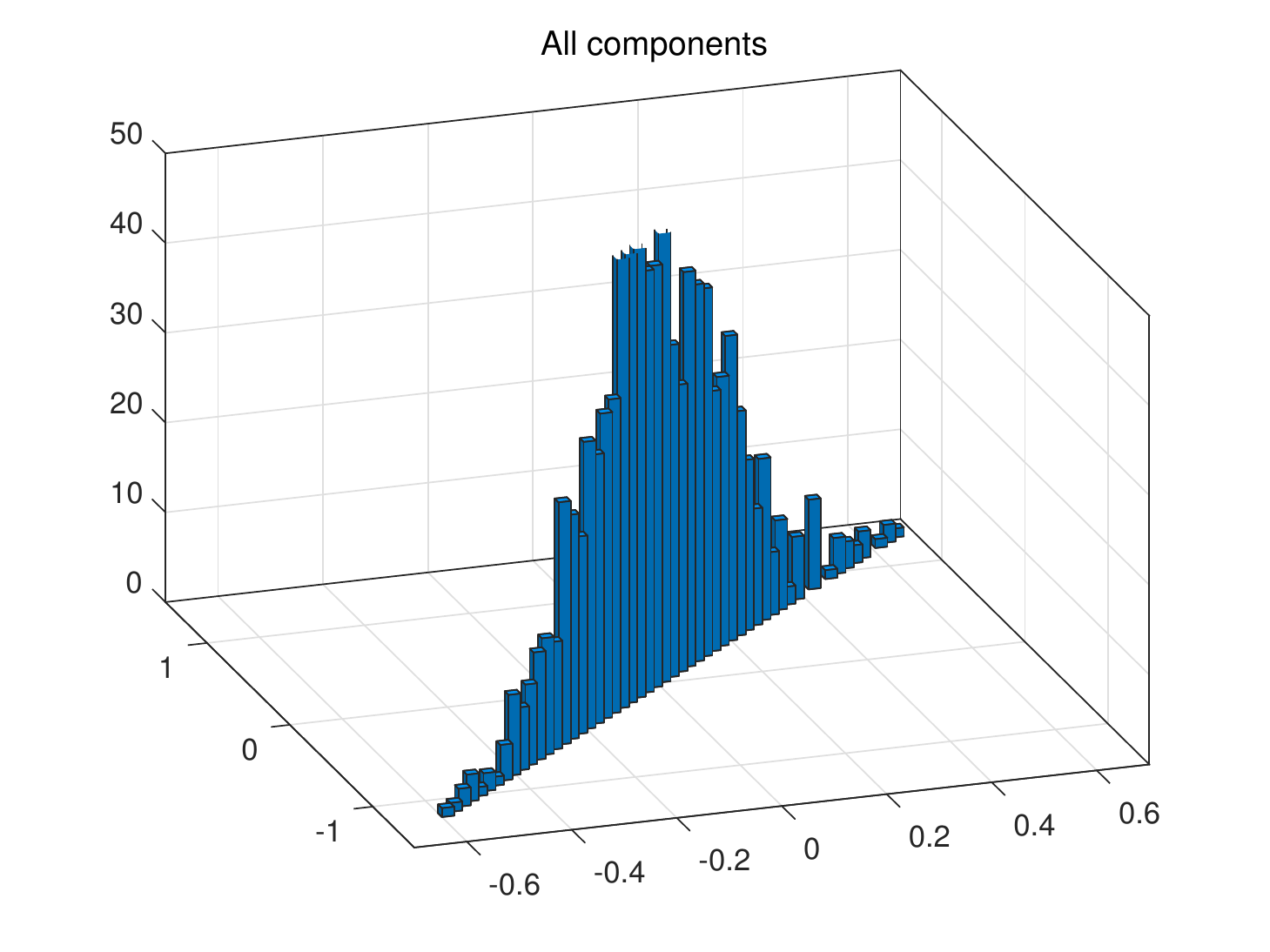}
		\end{minipage}
	}
	\subfigure[$\frac{1}{\sqrt{k}}\sum_{t=1}^k(x_{1,t}-x^*)$]{
		\begin{minipage}[t]{0.4\textwidth}
			\centering
			\includegraphics[width=6cm]{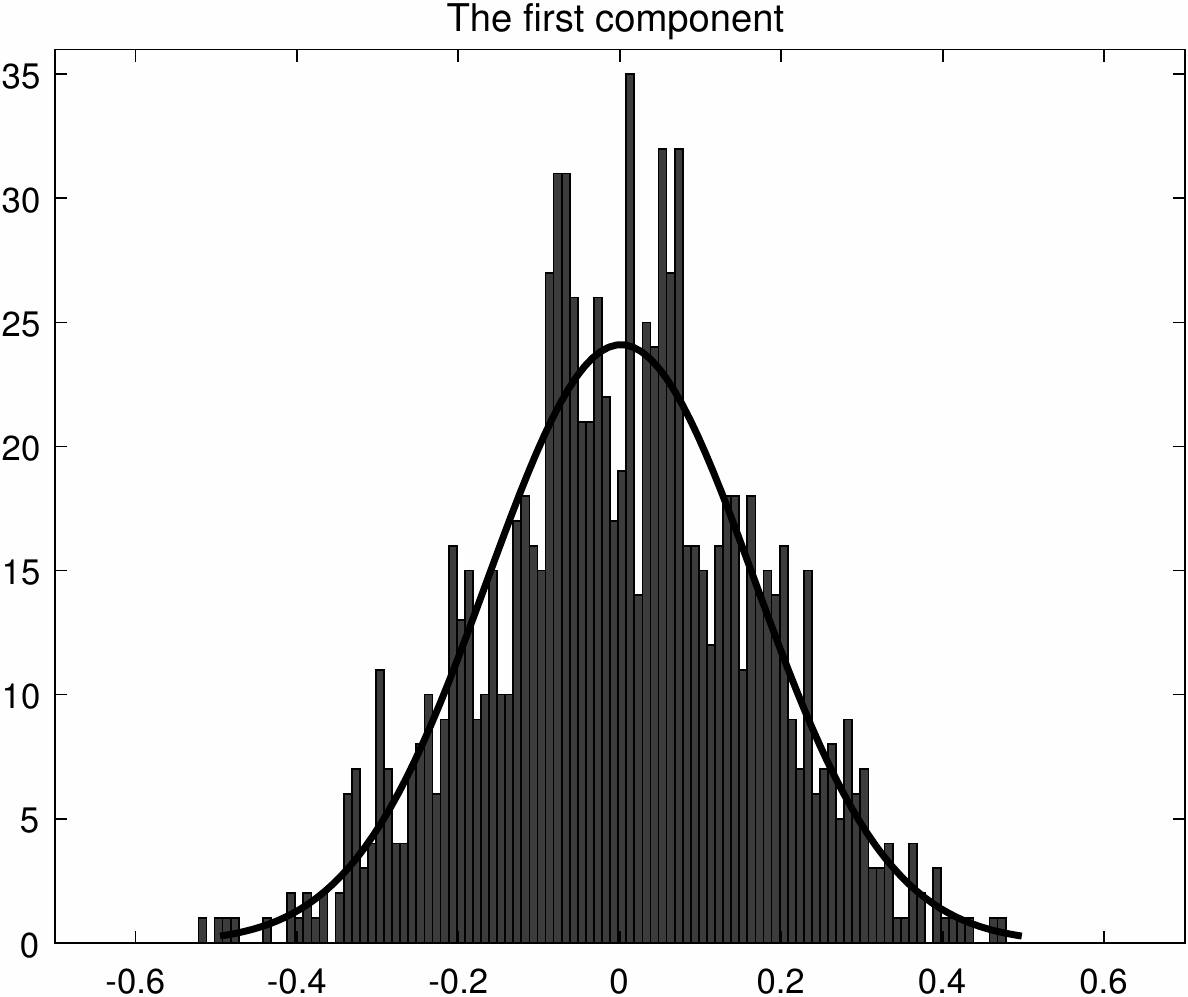}
			\includegraphics[width=6cm]{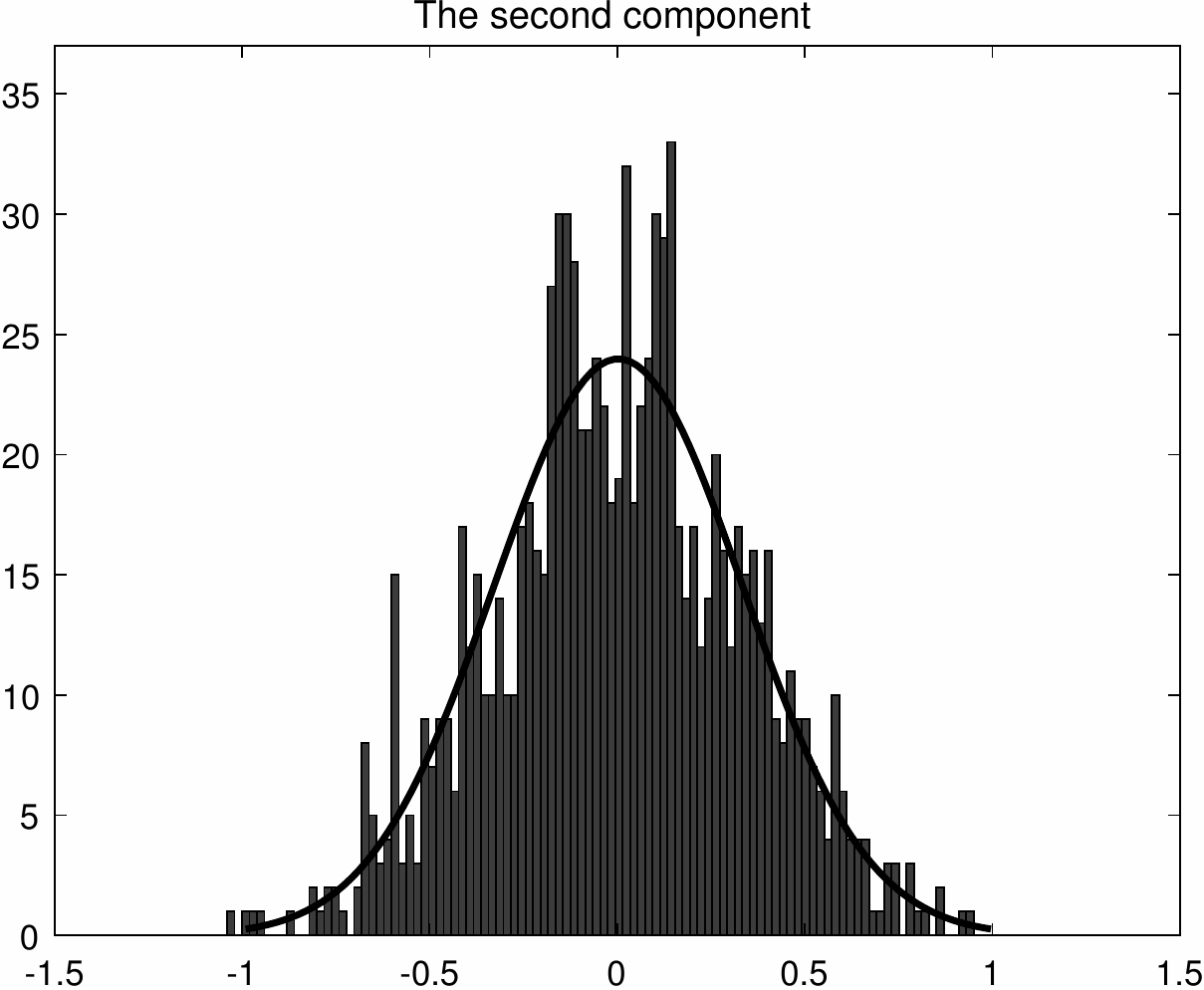}
			\includegraphics[width=6cm]{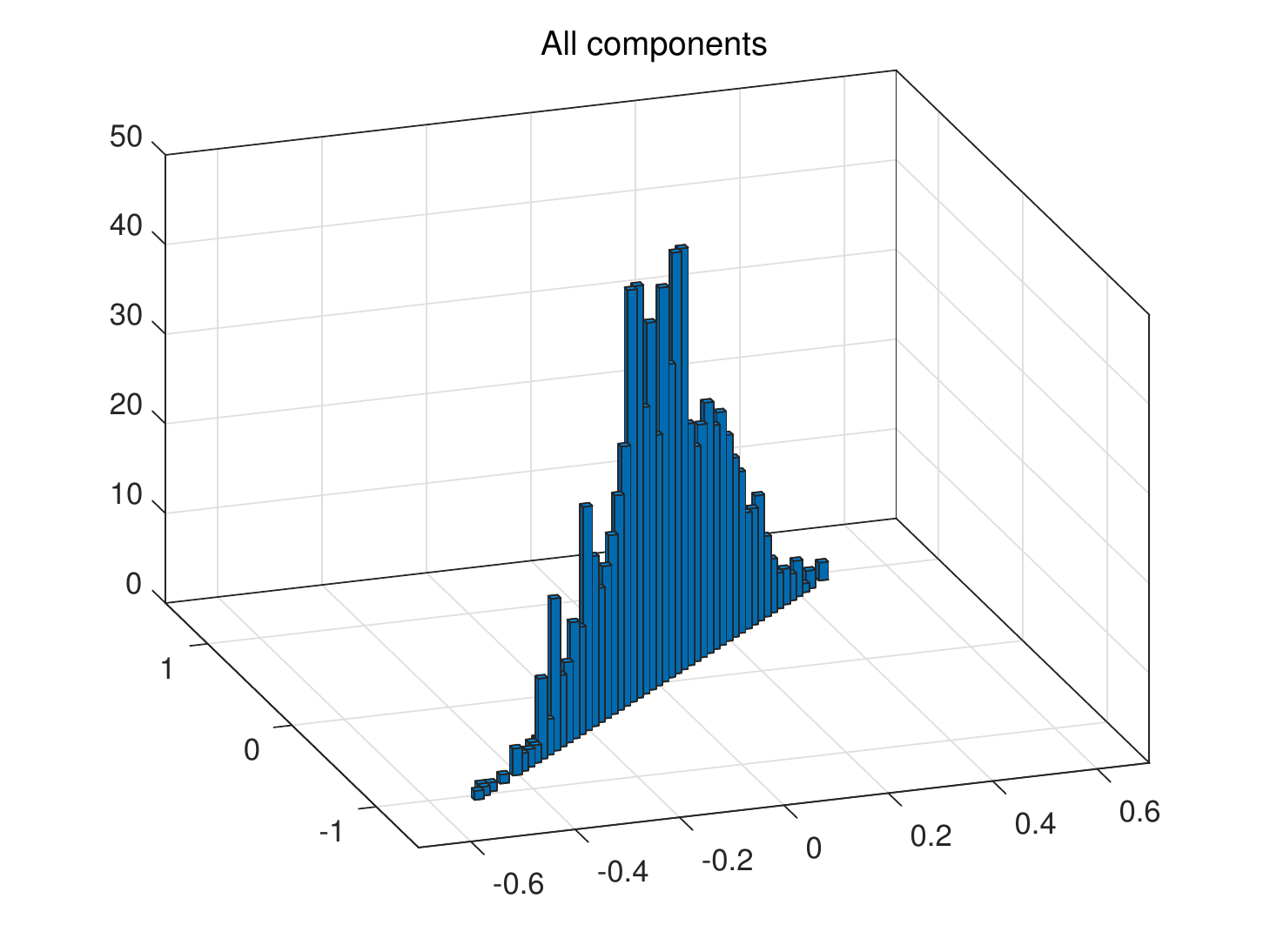}
		\end{minipage}
	}
	\caption{The histograms and limit distributions for $\frac{x_{1,k}-x^*}{\sqrt{\al_{k}}}$ and $\frac{1}{\sqrt{k}}\sum_{t=1}^k(x_{1,t}-x^*)$. }
	\label{fig:asym norm,effi}
\end{figure}

In the numerical test,  we set the optimal solution $x^*=(1, 2)^T$,
\begin{equation*}
\mathcal{X}\define\{(x^{(1)},x^{(2)})^T\in\mathbb{R}^2:-2x^{(1)}+x^{(2)}\le 0,~x^{(1)}\le 5,~x^{(2)}\ge0\}
\end{equation*}
and the subspace corresponds to (\ref{eq: Y}) is
\begin{equation}\label{simu:active-set}
\mathcal{Y}=\{x:-2x^{(1)}+x^{(2)}=0\}.
\end{equation}
$R_{u,j} ,j=1,\cdots, m$ is randomly generated semi-positive definite matrix in $\mathbb{R}^{2\times 2}$. Moreover, the regularizer is  $\psi(x)=\frac{1}{2}\|x\|^2$.  For each implement,	the step-size $\alpha_k=5/k^{0.67}$, the initial point is random generated in set $[0,5]\times[0, 5]$.

In the first simulation, we set the number of agents $m=50$, and the weigh matrix is generated by the broadcast gossip scheme, which is not doubly stochastic but $\mathbf{1}^T\mathbb{E}(A_k)=\mathbf{1}^T$ \cite{bianchi2013performance}.

To demonstrate the path-wise convergence properties of the algorithm, the trajectories with $k\leq 1000$ of selected agents' estimates $x_{j,k}$, which are picked randomly from three of 50 agents, and averaged estimate $\sum_{j=1}^{50} x_{j,k}/50$ are shown in Fig. \ref{fig:convergence}(a). The
simulation results are consistent with Theorem \ref{thm:convergence}.

To show the result of active set identification, the points of $\bar x_k$ generated by Algorithm \ref{alg:DDA} (denoted by DDA) and  distributed projection stochastic gradient (DPG) algorithm are plotted in phase plane respectively. It can be seen from Fig. \ref{fig:convergence}(b) that the DPG algorithm fails to identify the active constraint (\ref{simu:active-set}), while the DDA algorithm identifies it.

In the second simulation, the weigh matrix is generated by the pairwise gossip scheme, which is doubly stochastic \cite{bianchi2013performance}. Algorithm \ref{alg:DDA} is run for 1000 times independently.

Fig. \ref{fig:asym norm,effi} demonstrates the asymptotic normality and asymptotic efficiency of Algorithm \ref{alg:DDA}.
On the one hand, Fig. \ref{fig:asym norm,effi}(a) shows the histograms for each component and all component of $\frac{x_{1,k}-x^*}{\sqrt{\al_{k}}}$ at time $k=2000$ respectively. We use the normal distribution to fit the 1000 samples for $\frac{x_{1,k}^{(1)}-x^{*(1)}}{\sqrt{\al_{k}}}$, $\frac{x_{1,k}^{(2)}-x^{*(2)}}{\sqrt{\al_{k}}}$ and $\frac{x_{1,k}-x^*}{\sqrt{\al_{k}}}$ with $k=2000$. It is shown that the data set are fitted with the normal distribution, which verifies the asymptotic normality result of Theorem \ref{thm:asym norm}.  Moreover, the left bottom figure in Fig. \ref{fig:asym norm,effi} shows that almost all the points lies on the subspace $\mathcal{Y}$ defined by (\ref{simu:active-set}), which is consistent with active-set identification result of Lemma \ref{lem:con iden}.
On the other hand,  Fig. \ref{fig:asym norm,effi}(b) presents the histograms of averaged estimate $\frac{1}{\sqrt{k}}\sum_{t=1}^k(x_{1,t}-x^*)$.
In order to eliminate the impact of  non-identification points of active-set at the beginning iterations,
we take the average of the  last 500 of the 2000 iterations, that is, $\frac{1}{\sqrt{500}}\sum^{2000}_{t=1501}(x_{1,t}-x^*) $.
It is shown from Fig. \ref{fig:asym norm,effi}(b) that the averaged estimates have small variances compared to the left counterparts, which is coherent with the asymptotic efficiency result given in Theorem \ref{asymptotic efficient}.


\bibliographystyle{siamplain}
\bibliography{mybib}

\section*{Appendix}
\begin{appendices}
\section{Proof of Lemma \ref{lem:consis}}\label{proof:consis}
\begin{proof}
	Let
	\begin{equation}\label{weight mat}
	\hat{A}_k:=A_k\otimes I_d,  \quad J:=\frac{1}{m}\textbf{1}\textbf{1}^T\otimes I_d,  \quad J_{\bot}:=I_{md}-J,
	\end{equation}
	where $\otimes$ denotes the Kronecker product. Denote
	\begin{equation}\label{consensus vector}
	Z_k:=
	\begin{pmatrix}
	z_{1,k}\\
	z_{2,k}\\
	\vdots\\
	z_{m,k}\\
	\end{pmatrix}
	\in \mathbb{R}^{md}, \quad
	\bar{Z}_k:=JZ_k=
	\begin{pmatrix}
	\bar{z}_k\\
	\bar{z}_k\\
	\vdots\\
	\bar{z}_k\\
	\end{pmatrix}
	\in \mathbb{R}^{md},\quad
	{Z}_{k,\bot}\define J_{\bot}Z_k=Z_k-\bar{Z}_k.
	\end{equation}
	We prove the lemma by investigating the recursion of disagreement vector ${Z}_{k,\bot}$. Recall the following recursion in Algorithm \ref{alg:DDA}
	$$z_{j,k}=\sum_{i\in N_{j,k}}[A_k]_{ji}z_{i,k-1}-\alpha_{k}\nabla F_j(x_{j,k};\xi_{j,k}),$$
	which reduces to
	\begin{equation}\label{Z-recursion}
	Z_{k}=\hat{A}_{k}Z_{k-1}-\alpha_{k}G_{k} \text{ with } G_{k}=\left(\nabla F_1(x_{1,k};\xi_{1,k})^T,\cdots,\nabla F_m(x_{m,k};\xi_{m,k})^T\right)^T
	\end{equation}
	by using the notation (\ref{weight mat}). Hence, we obtain the recursion for $Z_{k,\bot}$
	\begin{equation}\label{expandation of consensus error norm1}
	{Z}_{k,\bot}=J_{\bot}\hat{A}_{k}Z_{k-1}-\alpha_{k}J_{\bot}G_{k}
	=J_{\bot}\hat{A}_{k}\bar{Z}_{k-1,\bot}-\alpha_{k}J_{\bot}G_{k},
	\end{equation}
	where the second equality follows from the fact
	$
	J_{\bot}\hat{A}_kJ_{\bot}=J_{\bot}\hat{A}_k.
	$
	Introducing an auxiliary matrix
	$$W_{k}:=\hat{A}_{k}J_{\bot}^2\hat{A}_{k}=\Big(A_k^T(I_m-\tfrac{11^T}{m})A_k\Big)\otimes I_{d},$$
	it follows from (\ref{expandation of consensus error norm1}) that
	\begin{equation}
	\begin{aligned}\label{expandation of consensus error norm}
	\left\|{Z}_{k,\bot}\right\|^2
	=&{Z}_{k-1,\bot}^TW_{k}{Z}_{k-1,\bot}+\alpha_{k}^2G_{k}^TJ_{\bot}^2G_{k}
	-2\alpha_{k}G_{k}^TJ_{\bot}^2\hat{A}_{k}{Z}_{k-1,\bot}\\
	=&{Z}_{k-1,\bot}^TW_{k}{Z}_{k-1,\bot}+\alpha_{k}^2G_{k}^TJ_{\bot}G_{k}
	-2\alpha_{k}G_{k}^TJ_{\bot}\hat{A}_{k}{Z}_{k-1,\bot},
	\end{aligned}
	\end{equation}
	where the second equality follows from the fact $J_{\bot}^2=J_{\bot}$.

	Note that $W_{k}$ is independent of $\mathcal{F}_{k}$ and $G_{k}$ by Assumption \ref{ass:A rho} and \ref{ass:sample}.  Taking conditional expectation on both side of  (\ref{expandation of consensus error norm}) with respect to $\mathcal{F}_{k}$ and $G_{k}$, we have
	\begin{equation*}
	\mathbb{E}\left[\|\bar{Z}_{k,\bot}\|^2\big|\mathcal{F}_{k},G_{k}\right]\leq\rho_{k}\|\bar{Z}_{k-1,\bot}\|^2
	+2\alpha_{k}\sqrt{m}\|J_{\bot}\|\|\bar{Z}_{k-1,\bot}\|\|G_{k}\|
	+\alpha_{k}^2\|J_{\bot}\|\|G_{k}\|^2.
	\end{equation*}
	where the bound $\|\hat{A}_k\|=\|A_k\otimes I_d\| =\|A_k\| \leq \sqrt{m}$ is obtained by the row stochasticity of $A_k$. Taking expectation on both sides, we arrive at
	\begin{equation}
	\begin{aligned}\label{1 expandation of consensus error norm}
	&\mathbb{E}\left[\|\bar{Z}_{k,\bot}\|^2\right]\\
	\leq&\rho_{k}\mathbb{E}\left[\|\bar{Z}_{k-1,\bot}\|^2\right]
	+2\alpha_{k}\sqrt{m}\|J_{\bot}\|\mathbb{E}\left[\|\bar{Z}_{k-1,\bot}\|\|G_{k}\|\right]+\alpha_{k}^2\|J_{\bot}\|\mathbb{E}\left[\|G_{k}\|^2\right]\\
	\leq&\rho_{k}\mathbb{E}\left[\|\bar{Z}_{k-1,\bot}\|^2\right]
	+2\alpha_{k}\sqrt{m}\|J_{\bot}\|\sqrt{\mathbb{E}\left[\|\bar{Z}_{k-1,\bot}\|^2\right]\mathbb{E}\left[\|G_{k}\|^2\right]}+\alpha_{k}^2\|J_{\bot}\|mL_0^2\\
	\leq&\rho_{k}\mathbb{E}\left[\|\bar{Z}_{k-1,\bot}\|^2\right]
	+2\alpha_{k}m\|J_{\bot}\|\sqrt{L_0^2}\sqrt{\mathbb{E}\left[\|\bar{Z}_{k-1,\bot}\|^2\right]}+\alpha_{k}^2\|J_{\bot}\|mL_0^2,
	\end{aligned}
	\end{equation}
	where the second inequality follows from the Cauchy-Schwarz inequality and the last inequality follows from the fact
	\begin{equation*}
	\begin{aligned}
	\sqrt{\mathbb{E}\left[\|G_{k}\|^2\right]}&= \sqrt{\mathbb{E}\left[\left\|\left(\nabla F_1(x_{1,k-1};\xi_{1,k-1})^T,\nabla F_2(x_{2,k-1};\xi_{2,k-1})^T\cdots,\nabla F_m(x_{m,k-1};\xi_{m,k-1})^T\right)^T\right\|^2\right]}\\
	&=\sqrt{\mathbb{E}\left[\sum_{j=1}^m\left\|\nabla F_j(x_{j,k-1};\xi_{j,k})\right\|^2\right]}\le \sqrt{mL_0^2},
	\end{aligned}
	\end{equation*}
	and $L_0^2$  is defined in  (\ref{eq: L0}). 
	
	Define $u_k=\mathbb{E}\left[\|\bar{Z}_{k,\bot}\|^2\right],M=\max\{2m\|J_{\bot}\|\sqrt{L_0^2},\|J_{\bot}\|mL_0^2\}$. 
	Then (\ref{1 expandation of consensus error norm}) can be rewritten as
	\begin{equation}\label{recursion-consensus-error}
	u_{k}\leq\rho_{k}u_{k-1}
	+M\al_k\sqrt{u_{k-1}}+M\al_k^2.
	\end{equation}
	
	We now apply \cite[Lemma 3]{bianchi2013performance} to prove the lemma. For this, we need to validate conditions (22)-(25) of \cite[Lemma 3]{bianchi2013performance}. First of all, the step-size $\al_k$ fulfills the requirement and (\ref{recursion-consensus-error}) can be viewed as a special case of (22)-(23) of \cite[Lemma 3]{bianchi2013performance} with $v_k\equiv0$. Then, we verify the bound $\mathop{\lim\sup}_k \phi_ku_k$ for two scenarios of the lemma.
	
	(i) Taking $\phi_k=k^{2\beta}$, by Assumption \ref{ass:rho a} (ii), we have
	\begin{equation}\label{verify-phi-conditions}
	\left\{\begin{aligned}
	&\mathop{\lim\sup}_k\Big(\al_k\sqrt{\phi_k}+\tfrac{\phi_{k-1}}{\phi_k}\Big)=\mathop{\lim\sup}_k\Big(\al_kk^\beta+(1-\tfrac{1}{k})^\beta\Big)=1<\infty\\
	&\mathop{\lim\inf}_k(\al_k\sqrt{\phi_k})^{-1}\Big(\tfrac{\phi_{k-1}}{\phi_k}-\rho_k\Big)=\mathop{\lim\inf}_k\frac{(1-\tfrac{1}{k})^\beta-\rho_k}{\al_kk^\beta}>0\\
	&\sum_{k=1}^\infty\phi_k^{-1}=\sum_{k=1}^\infty\frac{1}{k^{2\beta}}<\infty
	\end{aligned}\right.
	\end{equation}
	hence all conditions of \cite[Lemma 3]{bianchi2013performance} are satisfied, we obtain (\ref{consensus rate}).
	
	(ii) Noticing $\|\bar{Z}_{k,\bot}\|^2=\sum_{j=1}^m\|\bar{z}_k-z_{j,k}\|^2$ and (\ref{consensus rate}), there exists a constant $d$ such that for any $j\in V$
	\begin{equation*}
	\sum_{k=1}^\infty\mathbb{E}\left[\|\bar{z}_k-z_{j,k}\|^2\right]\leq d^2\sum_{k=1}^\infty k^{-2\beta}<\infty.
	\end{equation*}
	By the monotone convergence theorem, we have
	\begin{equation*}
	\sum_{k=0}^{\infty}\|\bar{z}_k-z_{j,k}\|^2<\infty\quad \text{a.s.}
	\end{equation*}
	Similarly,
	\begin{equation*}
	\sum_{k=1}^\infty\gamma_k\mathbb{E}\left[\|z_k-z_{j,k}\|\right]\leq\sum_{k=1}^\infty\gamma_k\sqrt{\mathbb{E}\left[\|z_k-z_{j,k}\|^2\right]}\leq d\sum_{k=1}^\infty\gamma_kk^{-\beta}<\infty,
	\end{equation*}
	and hence we obtain that for any $j\in V$
	\begin{equation*}
	\sum_{k=1}^\infty\gamma_k\|z_k-z_{j,k}\|<\infty\quad \text{a.s.}
	\end{equation*}
	
	(iii) If $A_k,   k=1,2, \cdots $ satisfies (a) and the step-size satisfies (b), then by taking $\phi_k=\al_k^{-2}$,  we can also show in a similar way to (\ref{verify-phi-conditions}) that all conditions of \cite[Lemma 3]{bianchi2013performance} are satisfied, and hence (\ref{consensus rate 1}) holds.
\end{proof}

\section{Proof of Lemma \ref{lem:con iden}}\label{Proof:con iden}
\begin{proof}
	By the iteration (\ref{consensus dual averaging alrithm 1})
	\begin{equation}\label{iteration}
	\bar{x}_{k+1}=\amin_{x\in\{Bx\leq b,~Cx\leq c\}}\left\{\langle\nabla f(x^*),x\rangle+\langle v_k,x\rangle+\frac{m}{2\widetilde{\al}_k}\|x\|^2\right\},
	\end{equation}
	where
	$$
	v_k=\frac{-m\bar{z}_k}{\widetilde{\al}_k}-\nabla f(x^*), \quad \widetilde{\al}_k\define\sum_{t=1}^k\al_{t}.
	$$
	According to the Karush-Kuhn-Tucker (KKT) conditions of problem (\ref{iteration}), there exist $\lambda_k,\mu_k\geq 0$ such that
	$$\nabla f(x^*)+v_k+\frac{m\bar{x}_{k+1}}{\widetilde{\al}_k}+B^T\lambda_k+C^T\mu_k=0.$$
	Note that $\bar{x}_k\rightarrow x^*$ almost surely and $\widetilde{\al}_k\define\sum_{t=1}^k\al_{t}\rightarrow\infty$, which implies
	$$\frac{m\bar{x}_{k+1}}{\widetilde{\al}_k}\rightarrow 0\quad \text{a.s.}$$
	If $v_k\to 0$ almost surely,  it is easy to show in a similar way to \cite[part 12.1]{duchi2016asymptotic} that
	$$B\bar{x}_k=b,~C\bar{x}_k<c\quad \text{a.s.}$$
	when $k$ is large enough.
	
	Next,  we show $v_k\to 0$ almost surely. For convenience of notation, we denote
	\begin{equation*}
	\nabla\mathbf{f}^*\define
	\begin{pmatrix}
	\nabla f(x^*)\\
	\nabla f(x^*)\\
	\cdots\\
	\nabla f(x^*)
	\end{pmatrix},
	\;  \nabla\mathbf{f}_{ag}^*=    \begin{pmatrix} \nabla f_1(x^*)\\\nabla f_2(x^*)\\\cdots \\\nabla f_m(x^*) \end{pmatrix},  \;
	\nabla\mathbf{f}_{t} \define  \begin{pmatrix}
	\nabla f_1(x_{1,t})\\
	\nabla f_2(x_{2,t})\\ \cdots \\ \nabla f_m(x_{m,t})   \end{pmatrix}, \; S_{t}\define  \begin{pmatrix} s_{1,t}\\ s_{2,t} \\ \cdots  \\ s_{m,t}   \end{pmatrix}.
	\end{equation*}
	Recall the  definition of $\bar{Z}_k$ in (\ref{consensus vector}),
	\begin{equation*}
	\|v_k\|^2=\left\|\frac{-m\bar{z}_k}{\widetilde{\al}_k}-\nabla f(x^*)\right\|^2=\dfrac{1}{m}\left\|\frac{-m\bar{Z}_k}{\widetilde{\al}_k}-\nabla\mathbf{f}^*\right\|^2.
	\end{equation*}
	Then it is sufficient to show $\left\|\frac{-m\bar{Z}_k}{\widetilde{\al}_k}-\nabla\mathbf{f}^*\right\|^2$ converges to 0 almost surely.
	Recall the definitions of (\ref{consensus vector}) and (\ref{Z-recursion}),
	Note also that
	\begin{equation*}
	\begin{aligned}
	\bar{Z}_k=JZ_k
	&=J\left(\hat{A}_kZ_{k-1}-\alpha_{k}G_{k}
	\right)\\
	&=J\hat{A}_kZ_{k-1}-\alpha_{k}JG_{k}
	=JZ_{k-1}-\alpha_{k}JG_{k}\\
	&=\bar{Z}_{k-1}-\alpha_{k}JG_{k}
	\cdots
	=\bar{Z}_{0}-\sum_{t=1}^k\alpha_tJG_t,
	\end{aligned}
	\end{equation*}
	where the third equality follows from the fact that $\hat{A}_k$ is doubly stochastic. Without loss of generality, we set $\bar{Z}_{0}=\textbf{0}$. Then  by the fact $G_t= \nabla\mathbf{f}_{t}+S_{t}$
	\begin{equation*}
	\bar{Z}_k=-\sum_{t=1}^k\alpha_tJG_t
	=-\sum_{t=1}^k\alpha_tJ\nabla\mathbf{f}_{t}-\sum_{t=1}^k\alpha_tJS_{t}.
	\end{equation*}
	Thus
	\begin{equation*}
	\left\|\frac{-m\bar{Z}_k}{\widetilde{\al}_k}-\nabla\mathbf{f}^*\right\|^2\leq  2\left\|\sum_{t=1}^k\dfrac{\alpha_t}{\widetilde{\al}_k}mJ\nabla\mathbf{f}_{t}-\nabla\mathbf{f}^*\right\|^2+2\left\|\sum_{t=1}^k\dfrac{\alpha_t}{\widetilde{\al}_k}mJS_{t}\right\|^2,
	\end{equation*}
	where the inequality due to the fact $\|a+b\|^2\le 2\|a\|^2+2\|b\|^2$. We left to show that the two terms on the right-hand side of above inequality converge to 0.
	
	Note that
	\begin{equation*}
	\begin{aligned}
	\left\|\sum_{t=1}^k\frac{\alpha_t}{\widetilde{\al}_k}mJ\nabla\mathbf{f}_{t}-\nabla\mathbf{f}^*\right\|^2
	&=\left\|\sum_{t=1}^k\frac{\alpha_t}{\widetilde{\al}_k}mJ\left(\nabla\mathbf{f}_{t}-\nabla\mathbf{f}_{ag}^*\right)\right\|^2\\
	&\le m^2\|J\|^2\left\|\sum_{t=1}^k\frac{\alpha_t}{\widetilde{\al}_k}\left(\nabla\mathbf{f}_{t}-\nabla\mathbf{f}_{ag}^*\right)\right\|^2\\
	&\leq m^2\|J\|^2\sum_{t=1}^k\frac{\alpha_t}{\widetilde{\al}_k}\|\nabla\mathbf{f}_{t}-\nabla\mathbf{f}_{ag}^*\|^2\\
	&=m^2\|J\|^2\sum_{t=1}^k\frac{\alpha_t}{\widetilde{\al}_k}\sum_{j=1}^m\|\nabla f_{j}(x_{j,t})-\nabla f_j(x^*)\|^2\\
	&\leq m^2\|J\|^2L\sum_{t=1}^k\frac{\alpha_t}{\widetilde{\al}_k}\sum_{j=1}^m\|x_{j,t}-x^*\|^2,
	\end{aligned}
	\end{equation*}
	where the first equality follows  from the fact $\nabla\mathbf{f}^*=mJ\nabla\mathbf{f}_{ag}^*$, the second inequality follows from the convexity of $\|\cdot\|^2$ and the fact $\sum_{t=1}^k\frac{\alpha_t}{\widetilde{\al}_k}=1$, the third inequality follows from  the Lipschitz continuity of $\nabla f_j(\cdot)$.
	Moreover,
	\begin{equation}\label{suf cond 0}
	\begin{aligned}
	\|\sum_{t=1}^k\frac{\alpha_t}{\widetilde{\al}_k}mJ\nabla\mathbf{f}_{t}-\nabla\mathbf{f}^*\|^2
	&\leq m^2\|J\|^2\sum_{t=1}^k\frac{\alpha_t}{\widetilde{\al}_k}\sum_{j=1}^m\|x_{j,t}-x^*\|^2\\
	&\leq 2m^2\|J\|^2\sum_{t=1}^k\frac{\alpha_t}{\widetilde{\al}_k}\sum_{j=1}^m\|x_{j,t}-\bar{x}_{t}\|^2+2m^3\|J\|^2\sum_{t=1}^k\frac{\alpha_t}{\widetilde{\al}_k}\|\bar{x}_{t}-x^*\|^2\\
	&\leq \dfrac{2m^2\|J\|^2}{\sigma}\sum_{t=1}^k\frac{\alpha_t}{\widetilde{\al}_k}\sum_{j=1}^m\|z_{j,t-1}-\bar{z}_{t-1}\|^2+2m^3\|J\|^2\sum_{t=1}^k\frac{\alpha_t}{\widetilde{\al}_k}\|\bar{x}_{t}-x^*\|^2\\
	&\le \dfrac{2m^2\|J\|^2}{\sigma}\frac{1}{\widetilde{\al}_k}\sum_{t=1}^\infty\alpha_t\sum_{j=1}^m\|z_{j,t-1}-\bar{z}_{t-1}\|^2+2m^3\|J\|^2\frac{1}{\widetilde{\al}_k}\sum_{t=1}^\infty\alpha_t\|\bar{x}_{t}-x^*\|^2,
	\end{aligned}
	\end{equation}
	where the third inequality follows from (\ref{consensus}).
	By (\ref{consunsus sum 0}) in Lemma \ref{lem:consis}
	\begin{equation*}
	\sum_{t=1}^\infty\alpha_t\sum_{j=1}^m\|z_{j,t-1}-\bar{z}_{t-1}\|^2\le \al_1 \sum_{t=1}^\infty\sum_{j=1}^m\|z_{j,t-1}-\bar{z}_{t-1}\|^2<\infty,  \quad \text{a.s.}
	\end{equation*}
	On the other hand,
	by \cite[ Lemma 9.5]{duchi2016asymptotic} and   (\ref{boundness}),
	$$\sum_{k=1}^\infty\alpha_k\|\bar{x}_{k}-x^*\|^2\le\frac{1}{c}\sum_{k=1}^\infty\alpha_k\left(f(\bar{x}_k)-f(x^*)\right)<\infty, \quad \text{a.s.}$$
	where $c$ is a random positive constant that depends on the bound $M\define\sup_{t}\|\bar{x}_t-x^*\|\vee1<\infty$. Therefore, we can argument that $\|\sum_{t=1}^k\dfrac{\alpha_t}{\widetilde{\al}_k}mJ\nabla\mathbf{f}_{t}-\nabla\mathbf{f}^*\|^2$ converges to $0$  almost surely  as $\widetilde \alpha_k\to \infty.$
	
	Next, we show $\sum_{t=1}^k\dfrac{\alpha_t}{\widetilde{\al}_k}mJS_{t}$ converges to $0$ almost surely. For this purpose, by the Kronecker lemma,  it is sufficient to show that
	$$ \sum_{t=1}^\infty\dfrac{\alpha_t}{\widetilde{\al}_t}mJS_{t}<\infty,\quad \text{a.s.}
	$$
	Note that  $\{\sum_{t=1}^k\alpha_tmJS_{t},\mathcal{F}_{k+1}\}$ is a martingale sequence as $\{S_{k},\mathcal{F}_{k+1}\}$ is a martingale difference sequence. Moreover,
	\begin{equation*}
	\sum_{t=1}^\infty\dfrac{1}{\widetilde{\al}_t^2} \mathbb{E}\left[\|\alpha_tmJS_{t}\|^2\big|\mathcal{F}_{t}\right]   \le  \sum_{t=1}^\infty\dfrac{1}{\widetilde{\al}_t^2}    \al_t^2m^2\|J\|^2\mathbb{E}\left[\sum_{j=1}^m\|s_{j,t}\|^2\bigg|\mathcal{F}_{t}\right]   \le     \sum_{t=1}^\infty\dfrac{1}{\widetilde{\al}_t^2}  4L_0^2m^3\|J\|^2\al_{t}^2 <\infty,
	\end{equation*}
	where the second inequality follows from (\ref{observe noise bound}). Then the convergence theorem
	for martingale difference sequences \cite[Appendix B.6, Theorem B 6.1]{chen2006stochastic} implies $\sum_{t=1}^k\dfrac{\alpha_t}{\widetilde{\al}_t}JS_{t}$ converges almost surely.
	This proof is completed.
\end{proof}

\section{Proof of Lemma \ref{lem:linear recur}}\label{Proof:linear recur}
\begin{proof}
	By the definition  (\ref{consensus dual averaging alrithm 1}), $\bar{x}_{k+1}$  satisfies the following KKT condition
	$$\bar{x}_{k+1}-\bar{z}_k+B^T\lambda_k+C^T\mu_k=0,$$
	where $\lambda_k\ge0$  and $\mu_k\ge0$ are the corresponding Lagrange multipliers. Then
	\begin{equation}\label{error}
	\bar{x}_{k+1}-x^*=\bar{x}_k-x^*+(\bar{z}_k-\bar{z}_{k-1})+B^T(\lambda_{k-1}-\lambda_k)+C^T(\mu_{k-1}-\mu_k).
	\end{equation}
	By the definition $\bar z_k$ in  (\ref{consensus dual averaging alrithm 1}),
	\begin{equation}\label{def:bar-zk}
	\begin{aligned}
	\bar{z}_k&=\dfrac{1}{m}\sum_{j=1}^m z_{j,k}
	=\dfrac{1}{m}\sum_{j=1}^m \left(\sum_{i=1}^m[A_k]_{ji}z_{i,k-1}-\alpha_{k}\nabla F_j(x_{j,k};\xi_{j,k})\right)\\
	&=\dfrac{1}{m}\sum_{j=1}^m\sum_{i=1}^m[A_k]_{ji}z_{i,k-1}-\frac{\alpha_{k}}{m}\sum_{j=1}^m\nabla F_j(x_{j,k};\xi_{j,k})\\
	&=\bar{z}_{k-1}-\frac{\alpha_{k}}{m}\sum_{j=1}^m\nabla F_j(x_{j,k};\xi_{j,k}),
	\end{aligned}
	\end{equation}
	where the fourth equality follows from that $A_k$ is doubly stochastic matrix. Then
	\begin{equation*}
	\begin{aligned}
	\bar{z}_k-\bar{z}_{k-1}&=-\frac{\alpha_{k}}{m}\sum_{j=1}^{m}\nabla F_j(x_{j,k};\xi_{j,k})\\
	&=-\frac{\alpha_{k}}{m}\sum_{j=1}^{m}\nabla f_j(x_{j,k})-\frac{\alpha_{k}}{m}\sum_{j=1}^{m}s_{j,k}\\
	&=\frac{\alpha_{k}}{m}\sum_{j=1}^{m}\left[\nabla f_j(\bar{x}_k)-\nabla f_j(x_{j,k})\right]-\frac{\alpha_{k}}{m}\big[\nabla f(\bar{x}_k)-\nabla f(x^*)\\
	&~~~~-\nabla^2 f(x^*)(\bar{x}_k-x^*)\big]-\frac{\alpha_{k}}{m}\left[\nabla f(x^*)+\nabla^2 f(x^*)(\bar{x}_k-x^*)\right]-\frac{\alpha_{k}}{m}\sum_{j=1}^{m}s_{j,k}.
	\end{aligned}
	\end{equation*}
	By left multiplying $P_B$ on both side of formula above,
	\begin{equation*}
	\begin{aligned}
	P_B(\bar{z}_k-\bar{z}_{k-1})
	&=\frac{\al_k}{m}\sum_{j=1}^{m}P_B\left[\nabla f_j(\bar{x}_k)-\nabla f_j(x_{j,k})\right]-\frac{\alpha_{k}}{m}P_B\big[\nabla f(\bar{x}_k)-\nabla f(x^*)\\
	&~~~~-\nabla^2 f(x^*)(\bar{x}_k-x^*)\big]-\frac{\alpha_{k}}{m}P_B\nabla^2 f(x^*)(\bar{x}_k-x^*)-\frac{\alpha_{k}}{m}\sum_{j=1}^{m}P_Bs_{j,k},
	\end{aligned}
	\end{equation*}
	where the equality follows from  the fact $P_B\nabla f(x^*)=0$. By the definition of $\Delta_k$ in (\ref{eq:deltat}) and the fact  $P_BB^T=0$, we have by left multiplying $P_B$ on both side of  (\ref{error}) that
	\begin{equation*}
	\begin{aligned}
	\bigtriangleup_{k+1}&=\bigtriangleup_{k}+P_B(\bar{z}_k-\bar{z}_{k-1})+P_BC^T(\mu_{k-1}-\mu_k)\\
	&=\bigtriangleup_{k}-\frac{\alpha_{k}}{m}P_B\nabla^2f(x^*)P_B(\bar{x}_k-x^*)-\frac{\alpha_{k}}{m}P_B\big[\nabla f(\bar{x}_k)-\nabla f(x^*)\\
	&~~~~-\nabla^2 f(x^*)(\bar{x}_k-x^*)\big]+\frac{\al_k}{m}\sum_{j=1}^{m}P_B\left[\nabla f_j(\bar{x}_k)-\nabla f_j(x_{j,k})\right]\\	 &~~~~+\frac{\alpha_{k}}{m}P_B\nabla^2f(x^*)(P_B-I_d)(\bar{x}_k-x^*)+P_BC^T(\mu_{k-1}-\mu_k)-\frac{\alpha_{k}}{m}\sum_{j=1}^{m}P_Bs_{j,k}\\
	&=\bigtriangleup_k-\alpha_{k}H\bigtriangleup_k+\al_k\left(\zeta_k+\eta_k+\epsilon_k+s_k\right),
	\end{aligned}
	\end{equation*}
	where $H$ is defined  in (\ref{eq:h}) and  $\zeta_k,\eta_k,\epsilon_k,s_k$ are defined in (\ref{errors}).
	Obviously,   formula above can  be rewritten as
	\begin{equation*}
	\bigtriangleup_{k+1}=\left[I_d-\alpha_{k}\left(H+D_k\right)\right]\bigtriangleup_k+\al_k\left(\eta_k+s_k+\epsilon_k\right),
	\end{equation*}
	where $D_k=-\zeta_k\dfrac{\bigtriangleup_k^T}{\|\bigtriangleup_k\|^2}$.
	The proof is completed.
\end{proof}

\section{Proof of Lemma \ref{lem:transition}}\label{Proof:transition}
\begin{proof}
	Part (i) is the well known result in linear algebra and we only prove part (ii).

	By definition
	\begin{equation*}
	U^TP_{B}U=
	\left(
	\begin{array}{cc}
	I_r&\textbf{0}_1\\
	\textbf{0}_2&\textbf{0}_3\\
	\end{array}\right),
	\end{equation*}
	where $\textbf{0}_1\in\mathbb{R}^{r\times(d-r)},\textbf{0}_2\in\mathbb{R}^{(d-r)\times r},\textbf{0}_3\in\mathbb{R}^{(d-r)\times(d-r)}$. Then for any $y\in\mathcal{Y}$, we have
	\begin{equation*}
	\begin{aligned}
	U^TP_{B}Hy&=U^TP_{B}HP_{B}y\\
	&=U^TP_{B}(UU^T)H(UU^T)P_{B}(UU^T)y\\
	&=(U^TP_{B}U)(U^THU)(U^TP_{B}U)U^Ty\\
	&=\left(
	\begin{array}{cc}
	I_r&\textbf{0}_1\\
	\textbf{0}_2&\textbf{0}_3\\
	\end{array}\right)U^THU
	\left(
	\begin{array}{cc}
	I_r&\textbf{0}_1\\
	\textbf{0}_2&\textbf{0}_3\\
	\end{array}\right)U^Ty.\\
	\end{aligned}
	\end{equation*}
	Let $U^THU=\begin{pmatrix}
	G_1&G_2\\
	G_3&G_4
	\end{pmatrix}$. Then,
	\begin{equation*}
	\begin{aligned}
	U^TP_{B}Hy
	&=\left(
	\begin{array}{cc}
	I_r&\textbf{0}_1\\
	\textbf{0}_2&\textbf{0}_3\\
	\end{array}\right)
	\left(
	\begin{array}{cc}
	G_1&G_2\\
	G_3&G_4\\
	\end{array}\right)
	\left(
	\begin{array}{cc}
	I_r&\textbf{0}_1\\
	\textbf{0}_2&\textbf{0}_3\\
	\end{array}\right)U^Ty\\
	&=\left(
	\begin{array}{cc}
	G_1&\textbf{0}_1\\
	\textbf{0}_2&\textbf{0}_3\\
	\end{array}\right)U^Ty=\left(
	\begin{array}{cc}
	G_1y_1\\
	\textbf{0}\\
	\end{array}\right),
	\end{aligned}
	\end{equation*}
	where $y_1\in\mathbb{R}^r$ determined by $U^Ty=(y_1^T,\textbf{0}^T)^T$, which means equality (\ref{decomp}) holds.
	
	For any nonzero vector $y_1\in\mathbb{R}^r$, let $y\define U(y_1^T,\textbf{0}^T)^T$. By the definition of matrix $U$, we have that $y$ is a nonzero vector and $y\in\mathcal{Y}$. Then
	\begin{equation}
	\begin{aligned}
	y_1^TG_1y_1&=\left(
	\begin{array}{c}
	y_1\\
	\textbf{0}\\
	\end{array}\right)^T
	\left(
	\begin{array}{cc}
	G_1&G_2\\
	G_3&G_4\\
	\end{array}\right)
	\left(
	\begin{array}{c}
	y_1\\
	\textbf{0}\\
	\end{array}\right)\\
	&=\left(
	\begin{array}{c}
	y_1\\
	\textbf{0}\\
	\end{array}\right)^T
	U^THU
	\left(
	\begin{array}{c}
	y_1\\
	\textbf{0}\\
	\end{array}\right)\\
	&=(U^Ty)^T
	U^THU(U^Ty)\\
	&=y^T(UU^T)H(UU^T)y\\
	&=y^THy\ge \mu\|y\|^2>0.
	\end{aligned}
	\end{equation}
	Therefore $G_1$ is positive definite. The proof is completed.
\end{proof}

\section{Proof of Lemma \ref{lem:asym effi}}\label{proof:lem:asym effi}
\begin{proof}
	Note that when $\bar{x}_{k},\bar{x}_{k-1}\in\{Bx=b,Cx<c\}$, $\bar{x}_{k}-\bar{x}_{k-1}$ can be expressed as
	\begin{equation*}
	\begin{aligned}
	\bar{x}_{k}-\bar{x}_{k-1}&=P_B\big[\bar{x}_{k}-\bar{x}_{k-1}\big]\\
	&=P_B\big[(\bar{z}_{k-2}-\bar{z}_{k-1})+B^T(\lambda_{k-2}-\lambda_{k-1})\big]\\
	&=P_B(\bar{z}_{k-2}-\bar{z}_{k-1}),
	\end{aligned}
	\end{equation*}
	and hence we obtain the recursion
	\begin{equation}\label{espacial expression of algorithm}
	\bar{x}_{k}=\bar{x}_{k-1}+P_B(\bar{z}_{k-2}-\bar{z}_{k-1}).
	\end{equation}
	For $\epsilon>0$ specified in Assumption \ref{ass:str conv}(ii), define the event $$\Upsilon_{l,k}=\{\|\bigtriangleup_j\|\leq \epsilon,B\bar{x}_j=b,C\bar{x}_j<c,~l\leq j\leq k\},$$ then $\Upsilon_{l,k}\in\mathcal{F}_{k}$. 
	Define $V_{l,k}=\|\bigtriangleup_k\|1_{\Upsilon_{l,k}}$ and note that $1_{\Upsilon_{l,k}}\leq1_{\Upsilon_{l,k-1}}$, it follows from (\ref{espacial expression of algorithm}) that
	\begin{equation}\label{recu-rela:t}
	\begin{aligned} V_{l,k}^2:=&\|\bigtriangleup_k\|^21_{\Upsilon_{l,k}}\leq\|\bigtriangleup_k\|^21_{\Upsilon_{l,k-1}}=\|\bigtriangleup_{k-1}+P_B(\bar{z}_{k-2}-\bar{z}_{k-1})\|^21_{\Upsilon_{l,k-1}}\\
	\le&V_{l,k-1}^2+2\langle \bigtriangleup_{k-1} 1_{\Upsilon_{l,k-1}},P_B(\bar{z}_{k-2}-\bar{z}_{k-1})\rangle+\|\bar{z}_{k-2}-\bar{z}_{k-1}\|^2,
	\end{aligned}
	\end{equation}
	where the non-expansiveness property of $P_B$ is involved in the last inequality of (\ref{recu-rela:t}).
	
	Taking the conditional expectation,
	\begin{equation}\label{analysis of error containing stop time}
	\begin{aligned}
	\mathbb{E}[V_{l,k}^2|\mathcal{F}_{k-1}]
	\leq &V_{l,k-1}^2+2\mathbb{E}[\langle P_B\bigtriangleup_{k-1} 1_{\Upsilon_{l,k-1}},\bar{z}_{k-2}-\bar{z}_{k-1}\rangle|\mathcal{F}_{k-1}]+\mathbb{E}[\|\bar{z}_{k-2}-\bar{z}_{k-1}\|^2|\mathcal{F}_{k-1}]\\
	=&V_{l,k-1}^2+2\mathbb{E}[\langle \bigtriangleup_{k-1} 1_{\Upsilon_{l,k-1}},\bar{z}_{k-2}-\bar{z}_{k-1}\rangle|\mathcal{F}_{k-1}]+\mathbb{E}[\|\bar{z}_{k-2}-\bar{z}_{k-1}\|^2|\mathcal{F}_{k-1}],
	\end{aligned}
	\end{equation}
	where the equality follows from the fact $P_B\bigtriangleup_{k-1}=\bigtriangleup_{k-1}$ due to $\bigtriangleup_{k-1}\in\{x:Bx=0\}$. Next we analyse the last two terms on the right-hand side of the equality of (\ref{analysis of error containing stop time}).
	
	For the third term, by (\ref{def:bar-zk}) and Assumption \ref{ass:lip func}(ii), we have
	\begin{equation}\label{1 analysis of error containing stop time}
	\begin{aligned}
	\mathbb{E}\left[\|\bar{z}_{k-2}-\bar{z}_{k-1}\|^2|\mathcal{F}_{k-1}\right]&=\mathbb{E}\Big[\|\frac{\alpha_{k-1}}{m}\sum_{j=1}^m\nabla F_j(x_{j,k-1};\xi_{j,k-1})\|^2\big|\mathcal{F}_{k-1}\Big]\\
	&\leq \frac{\alpha_{k-1}^2}{m}\sum_{j=1}^m\mathbb{E}\left[\|\nabla F_j(x_{j,k-1};\xi_{j,k-1})\|^2|\mathcal{F}_{k-1}\right]\\
	&\leq L_0^2\alpha_{k-1}^2,
	\end{aligned}
	\end{equation}
	where $L_0^2$ is defined in (\ref{eq: L0}).
	
	For the second term, substituting the following expression
	\begin{equation*}
	\begin{aligned}
	&\bar{z}_{k-2}-\bar{z}_{k-1}=-\frac{\alpha_{k-1}}{m}\sum_{j=1}^m\nabla F_j(x_{j,k-1};\xi_{j,k-1})\\
	=&-\frac{\alpha_{k-1}}{m}\nabla f(\bar{x}_{k-1})-\frac{\alpha_{k-1}}{m}\sum_{j=1}^m \big[\nabla f_j(x_{j,k-1})-\nabla f_j(\bar{x}_{k-1})\big]
	-\frac{\alpha_{k-1}}{m}\sum_{j=1}^m s_{j,k-1},
	\end{aligned}
	\end{equation*}
	and noticing that $\bigtriangleup_{k-1}=\bar{x}_{k-1}-x^*$ almost surely when $k$ is large enough by Lemma \ref{lem:con iden}, we arrive at
	\begin{equation}\label{2 analysis of error containing stop time}
	\begin{aligned}
	&\mathbb{E}[\langle \bigtriangleup_{k-1} 1_{\Upsilon_{l,k-1}},\bar{z}_{k-2}-\bar{z}_{k-1}\rangle|\mathcal{F}_{k-1}]\\
	=&\mathbb{E}\Big[\Big\langle\bigtriangleup_{k-1} 1_{\Upsilon_{l,k-1}},-\frac{\alpha_{k-1}}{m}\nabla f(\bar{x}_{k-1})+\frac{\alpha_{k-1}}{m}\sum_{j=1}^m \big[\nabla f_j(x_{j,k-1})-\nabla f_j(\bar{x}_{k-1})\big]\Big\rangle\Big|\mathcal{F}_{k-1}\Big]\\
	\leq& \frac{\alpha_{k-1}}{m}(f(x^*)-f(\bar{x}_{k-1}))1_{\Upsilon_{l,k-1}}+\frac{\epsilon\alpha_{k-1}}{m}\sum_{j=1}^m\|\nabla f_j(x_{j,k-1})-\nabla f_j(\bar{x}_{k-1})\|\\
	\leq& \frac{\alpha_{k-1}}{m}(f(x^*)-f(\bar{x}_{k-1}))1_{\Upsilon_{l,k-1}}+\frac{\epsilon\alpha_{k-1}L}{m}\sum_{j=1}^m\|\bar{z}_{k-1}-z_{j,k-1}\|.
	\end{aligned}
	\end{equation}
	Substituting (\ref{1 analysis of error containing stop time}) and (\ref{2 analysis of error containing stop time}) into (\ref{analysis of error containing stop time}), it follows that
	\begin{equation*}
	\begin{aligned} \mathbb{E}[V_{l,k}^2|\mathcal{F}_{k-1}]&\leq V_{l,k-1}^2+\frac{2\alpha_{k-1}}{m}(f(x^*)-f(\bar{x}_{k-1}))1_{\Upsilon_{l,k-1}}\\
	&~~~~+\frac{2\epsilon L\alpha_{k-1}}{m}\sum_{j=1}^m\|\bar{z}_{k-1}-z_{j,k-1}\|+L_0^2\alpha_{k-1}^2.
	\end{aligned}
	\end{equation*}
	By the restricted strongly convex property (\ref{restricted-strong-convexity}),
	we find that
	\begin{equation*}
	\mathbb{E}[V_{l,k}^2|\mathcal{F}_{k-1}]\leq\Big(1-\frac{2\epsilon'\alpha_{k-1}}{m}\Big)V_{l,k-1}^2
	+\frac{2\epsilon L\alpha_{k-1}}{m}\sum_{j=1}^m\|\bar{z}_{k-1}-z_{j,k-1}\|+L_0^2\alpha_{k-1}^2
	\end{equation*}
	for some constant $\epsilon'>0$. Taking expectation on both sides of the above inequality yields
	\begin{equation*}
	\begin{aligned}
	\mathbb{E}[V_{l,k}^2]\leq&\Big(1-\frac{2\epsilon'\alpha_{k-1}}{m}\Big)\mathbb{E}[V_{l,k-1}^2] +\frac{2\epsilon L\alpha_{k-1}}{m}\sum_{j=1}^m\mathbb{E}\left[\|\bar{z}_{k-1}-z_{j,k-1}\|\right]
	+L_0^2\alpha_{k-1}^2\\
	\leq& \exp\Big(-\frac{2\epsilon'\alpha_{k-1}}{m}\Big)\mathbb{E}[V_{l,k-1}^2]+\left(2\epsilon LD+L_0^2\right)\alpha_{k-1}^2,
	\end{aligned}
	\end{equation*}
	where the last inequality follows from the fact $\exp(-x)\geq (1-x), x\in(0,1)$ and $D$ is a constant specified in Lemma \ref{lem:consis}(iii) such that
	\begin{equation}\label{first order cons}
	\sup_k\alpha_{k}^{-1}\mathbb{E}\left[\|\bar{z}_{k}-z_{j,k}\|\right]\le\sup_k\sqrt{\alpha_{k}^{-2}\mathbb{E}\left[\|\bar{z}_{k}-z_{j,k}\|^2\right]}\le D<\infty.
	\end{equation}
	Taking iterations down to $l=[k/2]$ in such way, here $[x]$ denotes the integer part of $x$, we obtain
	\begin{equation}\label{analysis of error containing stop time 1}
	\begin{aligned}
	\mathbb{E}[V_{[k/2],k}^2]
	&\leq \exp\Big(-2\epsilon'\sum_{t=[k/2]}^{k-1}\frac{\alpha_{t}}{m}\Big) \mathbb{E}\left[\|\bigtriangleup_{[k/2]}\|^2\right]
	+\left(2\epsilon LD+L_0^2\right)\sum_{t=[k/2]}^{k-1}\alpha_{t}^2\exp\Big(-\sum_{l=t+1}^{k-1}\frac{\alpha_{l}}{m}\Big).
	\end{aligned}
	\end{equation}
	In what follows, we prove that $\sup_{k}\mathbb{E}\left[\|\bigtriangleup_{k}\|^2\right]<\infty$. In fact, taking expectation on both sides of (\ref{Lyapunov function 1}) and noting that the regularizer $\psi(x)=\frac{1}{2}\|x\|^2$ is $1$-strong convex, we find that
	\begin{equation*}
	\begin{aligned}
	\mathbb{E}\left[R_{k}\right]
	\leq& \mathbb{E}\left[R_{k-1}\right]-\frac{\alpha_k}{m}\mathbb{E}\left[\left(f(\bar{x}_k)-f(x^*)\right)\right]+\frac{2L_0\alpha_{k}}{m}\sum_{j=1}^m\mathbb{E}\left[\|z_{j,k-1}-\bar{z}_{k-1}\|\right]\\
	&+\frac{1}{m}\sum_{j=1}^m\mathbb{E}\left[\|\bar{z}_{k-1}-z_{j,k-1}\|^2\right]+L_0^2\alpha_{k}^2\\
	\le& \mathbb{E}\left[R_{k-1}\right]-\frac{\alpha_k}{m}\mathbb{E}\left[\left(f(\bar{x}_k)-f(x^*)\right)\right]+\frac{2L_0\alpha_{k}}{m}\sum_{j=1}^m\sqrt{\mathbb{E}\left[\|z_{j,k-1}-\bar{z}_{k-1}\|^2\right]}\\
	&+\frac{1}{m}\sum_{j=1}^m\mathbb{E}\left[\|\bar{z}_{k-1}-z_{j,k-1}\|^2\right]+L_0^2\alpha_{k}^2\\
	\le& \mathbb{E}\left[R_{k-1}\right]-\frac{\alpha_k}{m}\mathbb{E}\left[\left(f(\bar{x}_k)-f(x^*)\right)\right]+2L_0\sqrt{D}\al_{k}\al_{k-1}+D^2\al_{k-1}^2+L_0^2\alpha_{k}^2\\
	\le& \mathbb{E}\left[R_{k-1}\right]-\frac{\alpha_k}{m}\mathbb{E}\left[\left(f(\bar{x}_k)-f(x^*)\right)\right]+\left(2L_0\sqrt{D}+D^2+L_0^2\right)\alpha_{k-1}^2,
	\end{aligned}
	\end{equation*}
	where the second inequality follows from the Cauchy-Schwarz inequality, the third inequality from (\ref{first order cons}), and the last inequality from $\al_k$ being nonincreasing.
	Summing the above inequality from $1$ to $k$ yields
	\begin{equation*}
	\begin{aligned}
	\mathbb{E}\left[R_{k}\right]
	&\leq \mathbb{E}\left[R_{0}\right]-\frac{\alpha_k}{m}\sum_{t=1}^k\mathbb{E}\left[\left(f(\bar{x}_t)-f(x^*)\right)\right]+\left(2L_0\sqrt{D}+D^2+L_0^2\right)\sum_{t=1}^{k-1}\alpha_{t}^2,
	\end{aligned}
	\end{equation*}
	which implies $\sup_{k}\mathbb{E}\left[R_{k}\right]<\infty$. Therefore, we have
	$$\sup_{k}\mathbb{E}\left[\|\bigtriangleup_{k}\|^2\right]\leq\|P_B\|^2\sup_{k}\mathbb{E}\left[\|\bar{x}_k-x^*\|^2\right]\le 2\|P_B\|^2\sup_{k}\mathbb{E}\left[R_k\right]<\infty,$$
	where the second inequality follows from (\ref{Fc bound}). Denote $D_1=\sup_{k}\mathbb{E}\left[\|\bigtriangleup_{k}\|^2\right]$. Then by (\ref{analysis of error containing stop time 1}) and the fact that there has a constant $D_2$ such that $\sum_{t=[k/2]}^{k-1}\alpha_{t}>D_2 k^{1-\al}$, we have
	\begin{equation}\label{analysis of error containing stop time 2}
	\begin{aligned}
	\mathbb{E}\left[\|\bigtriangleup_k\|^21_{\Upsilon_{[k/2],k}}\right]
	\le& D_1\exp(-D_2 {k}^{1-\al}) \\
	&+\left(2\epsilon DL+L_0^2\right)\sum_{t=[k/2]}^{k-1}\alpha_{t}^2\exp\left(-D_2\left({k}^{1-\al}-t^{1-\al}\right)\right).
	\end{aligned}
	\end{equation}
	
	By Theorem \ref{thm:con rate} and Lemma \ref{lem:con iden}, for any given $a>0$,
	\begin{equation}\label{Pr:conv&con iden}
	\P\left\{\sup_{2k_0\le t<\infty}\|\bigtriangleup_t\|<\epsilon,~K<k_0\right\}>1-a,
	\end{equation}
	if $k_0$ is sufficiently large, where $K$ is a finite random integer specified in Lemma \ref{lem:con iden}. Summing (\ref{analysis of error containing stop time 2}) from $2k_0$ to $k$ yields
	\begin{equation*}
	\begin{aligned}
	\sum_{t=2k_0}^k\dfrac{1}{\sqrt{t}}\mathbb{E}[\|\bigtriangleup_t\|^21_{\Upsilon_{[t/2],t}}]
	&\le D_1\sum_{t=2k_0}^k\dfrac{1}{\sqrt{t}}\exp\left(-D_2 t^{1-\al}\right) +\left(2\epsilon DL+L_0^2\right)\sum_{t=2k_0}^k\dfrac{1}{\sqrt{t}}\dfrac{\log t}{t^{\al+1/2}}
	\end{aligned}
	\end{equation*}
	which follows from \cite[Lemma 15.5, Part 15]{duchi2016asymptotic}. Let $k\rightarrow\infty$, we have
	\begin{equation*}
	\sum_{t=2k_0}^{\infty}\mathbb{E}\left[\dfrac{1}{\sqrt{t}}\|\bigtriangleup_t\|^21_{\Upsilon_{[t/2],t}}\right]<\infty,
	\end{equation*}
	and by the monotone convergence theorem,
	\begin{equation}\label{analysis of error containing stop time 3}
	\sum_{t=2k_0}^{\infty}\dfrac{1}{\sqrt{t}}\|\bigtriangleup_t\|^21_{\Upsilon_{[t/2],t}}<\infty~\text{a.s.}
	\end{equation}
	which means that
	\begin{equation}\label{analysis of error containing stop time 4}
	\begin{aligned}
	\left\{\sup_{t\geq2k_0}\|\bigtriangleup_t\|<\epsilon,K<k_0\right\}
	\subset&\left\{\sup_{t\geq2k_0}\|\bigtriangleup_t\|<\epsilon,B\bar{x}_t=b,C\bar{x}_t<c,\forall t\geq 2k_0\right\}\\
	\subset&\left\{\sum_{t=2k_0}^{\infty}\dfrac{1}{\sqrt{t}}\|\bigtriangleup_t\|^2<\infty\right\},
	\end{aligned}
	\end{equation}
	
	Combining (\ref{Pr:conv&con iden}) with (\ref{analysis of error containing stop time 4}) shows that
	\begin{equation*}
	\P\left\{\sum_{t=2k_0}^{\infty}\dfrac{1}{\sqrt{t}}\|\bigtriangleup_t\|^2<\infty\right\}>1-a,
	\end{equation*}
	or equivalently
	\begin{equation*}
	\P\left\{\sum_{t=1}^{\infty}\dfrac{1}{\sqrt{t}}\|\bigtriangleup_t\|^2<\infty\right\}>1-a.
	\end{equation*}
	This verifies
	\begin{equation*}
	\sum_{t=1}^{\infty}\dfrac{1}{\sqrt{t}}\|\bigtriangleup_t\|^2<\infty~\text{a.s.}
	\end{equation*}
	because $a>0$ can be arbitrarily small. Finally,  an application of the Kronecker lemma implies (\ref{bigtri}). This complete the proof.
\end{proof}

\section{Results on stochastic approximation}
\label{sec:SA}
For ease of reading,  we recall some results on stochastic approximation from  \cite{Robbins1971A} and    \cite{chen2006stochastic}.
\begin{lem}\label{lem:Robbins-Siegmund} \cite{Robbins1971A}
	Let $\{\mathcal{F}_{k}\}$ be an nondecreasing sequence of $\sigma$-algebra and $\{v_k\}$, $\{a_k\}$,
	$\{b_k\}$,and $\{\phi_k\}$ be the four nonnegative sequence adopted to $\mathcal{F}_{k}$.
	Assume that for all $k$,
	$$\mathbb{E}[v_{k+1}|\mathcal{F}_{k}]\le(1+a_k)v_k+b_k-\phi_k.$$
	If $\sum_{k=1}^\infty a_k<\infty $ and $\sum_{k=1}^\infty b_k<\infty $  almost surely. Then $\{v_k\}$ converges to a finite random variable $v_{\infty} $ and $\sum_{k=1}^\infty\phi_k<\infty $ almost surely.
\end{lem}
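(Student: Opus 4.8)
The statement is the classical supermartingale convergence theorem of Robbins and Siegmund, so the plan is to reduce it to Doob's almost sure convergence theorem for nonnegative supermartingales, via a change of scale that absorbs the multiplicative factor $(1+a_k)$, together with a localization that copes with the fact that $\sum_k a_k$ and $\sum_k b_k$ are only finite \emph{almost surely} (and are themselves random). First I would normalize: set $\alpha_1=1$ and $\alpha_k=\prod_{i=1}^{k-1}(1+a_i)^{-1}$ for $k\ge 2$. Since each $a_i\ge 0$ is $\mathcal{F}_i$-measurable, $\alpha_k$ is $\mathcal{F}_{k-1}$-measurable and $0<\alpha_{k+1}\le\alpha_k\le 1$, with the key identity $\alpha_{k+1}(1+a_k)=\alpha_k$. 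I would then introduce
\[
V_k=\alpha_k v_k+\sum_{i=1}^{k-1}\alpha_{i+1}\phi_i-\sum_{i=1}^{k-1}\alpha_{i+1}b_i,
\]
and check, by multiplying the hypothesis $\mathbb{E}[v_{k+1}|\mathcal{F}_k]\le(1+a_k)v_k+b_k-\phi_k$ by $\alpha_{k+1}$ and invoking the identity above, that $\mathbb{E}[V_{k+1}|\mathcal{F}_k]\le V_k$; that is, $\{V_k,\mathcal{F}_k\}$ is a supermartingale.

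The main obstacle is that $V_k$ is in general neither nonnegative nor bounded in $L^1$, because the random tails $\sum_i\alpha_{i+1}b_i$ are only almost surely finite, so Doob's theorem cannot be applied to $V_k$ directly. To circumvent this I would localize. Define the stopping times
\[
\tau_m=\inf\Big\{n\ge1:\ \textstyle\sum_{k=1}^n a_k>m\ \text{ or }\ \sum_{k=1}^n b_k>m\Big\},
\]
which are genuine $\{\mathcal{F}_k\}$-stopping times since the partial sums are $\mathcal{F}_n$-measurable. On the event $\{k-1<\tau_m\}$ one has $\sum_{i\le k-1}a_i\le m$ and $\sum_{i\le k-1}b_i\le m$, whence $e^{-m}\le\alpha_k\le1$ (using $1+x\le e^{x}$) and $\sum_{i<k}\alpha_{i+1}b_i\le m$. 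Consequently the stopped process $V_{k\wedge\tau_m}+m$ is a \emph{nonnegative} supermartingale, to which Doob's convergence theorem applies and yields an almost surely finite limit as $k\to\infty$.

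Finally I would read off the conclusions on $\{\tau_m=\infty\}$, where $V_{k\wedge\tau_m}=V_k$ for every $k$. Convergence of $V_k$, together with the monotonicity of $\sum_{i<k}\alpha_{i+1}\phi_i$ and the finiteness of $\sum_{i<k}\alpha_{i+1}b_i$, forces $\sum_i\alpha_{i+1}\phi_i<\infty$; since $\alpha_{i+1}\ge e^{-m}>0$ there, this gives $\sum_i\phi_i<\infty$. Moreover $\alpha_k v_k=V_k-\sum_{i<k}\alpha_{i+1}\phi_i+\sum_{i<k}\alpha_{i+1}b_i$ converges, and because $\alpha_k\to\alpha_\infty\in[e^{-m},1]$ (a strictly positive limit, as $\sum_i a_i<\infty$ on this event), $v_k$ converges to a finite random variable $v_\infty$. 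Since $\{\sum_k a_k<\infty,\ \sum_k b_k<\infty\}=\bigcup_m\{\tau_m=\infty\}$ has probability one by hypothesis, letting $m\to\infty$ patches these statements together so that they hold almost surely on the whole space, which completes the argument.
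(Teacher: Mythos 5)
Your proposal is correct, and it is exactly the classical Robbins--Siegmund argument: renormalizing by $\alpha_k=\prod_{i=1}^{k-1}(1+a_i)^{-1}$ to turn $V_k=\alpha_k v_k+\sum_{i=1}^{k-1}\alpha_{i+1}\phi_i-\sum_{i=1}^{k-1}\alpha_{i+1}b_i$ into a supermartingale, localizing with the stopping times $\tau_m$ so that the stopped process shifted by $m$ is a nonnegative supermartingale to which Doob's convergence theorem applies, and then patching the conclusions over $\bigcup_m\{\tau_m=\infty\}$, which has probability one. The paper itself states this lemma without proof, citing Robbins and Siegmund, so there is no in-paper argument to compare against; yours reproduces the standard proof of the cited result (with the only tacit point being the integrability of the $v_k$ needed for Doob's theorem, which is implicit in the classical formulation).
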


\begin{lem}\cite[Lemma 3.1.1]{chen2006stochastic}\label{lem:rate}
	Suppose $d\times d$-dimension matrix $F_k\rightarrow F$, $F$ is a stable matrix ,that is, every eigenvalue of $F$ has strictly negative real part. If step-size $\al_{k}$ satisfies $$\al_k>0, \al_k \xrightarrow[k \to \infty]{} 0, \sum\limits_{k=1}^{\infty} {\al_k}=\infty,$$
	and $d$-dimension vectors $\{e_k\},\{\upsilon_k\}$ satisfy the following conditions
	\begin{equation}\label{rate condition}
	\sum_{k=1}^\infty \al_{k}e_k<\infty,~\upsilon_k\rightarrow 0,
	\end{equation}
	then $\{y_k\}$ defined by the following recursion with arbitrary initial value $x_0$ tends to zero:
	\begin{equation}\label{linear reccursion}
	y_{k+1}=y_k+\al_{k}F_ky_k+\al_{k}\left(e_k+\upsilon_k\right).
	\end{equation}
\end{lem}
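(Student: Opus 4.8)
The plan is to read \eqref{linear reccursion} as a discrete, time-varying linear recursion and to control its solution through the associated transition matrices. First I would fix an index $k_0$ large enough that $F_k$ is uniformly close to $F$ and $\al_k$ is uniformly small for $k\ge k_0$. Because $F$ is stable, the Lyapunov equation $F^TP+PF=-I$ admits a symmetric positive definite solution $P$; writing $\|y\|_P^2=y^TPy$ and expanding,
\begin{equation*}
\|(I+\al_kF_k)y\|_P^2=\|y\|_P^2-\al_k\|y\|^2+\al_k\,y^T\big[(F_k-F)^TP+P(F_k-F)\big]y+\al_k^2\,y^TF_k^TPF_ky.
\end{equation*}
Since $F_k\to F$ and $\al_k\to0$, for $k\ge k_0$ the last two terms are dominated by $\tfrac12\al_k\|y\|^2$, yielding a contraction $\|(I+\al_kF_k)y\|_P^2\le(1-c\al_k)\|y\|_P^2$ for some $c>0$. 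Setting $\Phi_j^k:=(I+\al_kF_k)\cdots(I+\al_jF_j)$ for $k\ge j\ge k_0$ (and $\Phi_{k+1}^k:=I$), iterating this bound and using $1-x\le e^{-x}$ together with the equivalence of $\|\cdot\|_P$ and $\|\cdot\|$ gives constants $b_1,b_2>0$ with
\begin{equation}\label{plan:expbound}
\|\Phi_j^k\|\le b_1\exp\Big(-b_2\sum_{l=j}^k\al_l\Big),\qquad k\ge j\ge k_0.
\end{equation}

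Next I would unroll the recursion from $k_0$ into the discrete variation-of-constants form
\begin{equation*}
y_{k+1}=\Phi_{k_0}^k\,y_{k_0}+\sum_{j=k_0}^k\Phi_{j+1}^k\al_je_j+\sum_{j=k_0}^k\Phi_{j+1}^k\al_j\upsilon_j,
\end{equation*}
and show each term vanishes. The initial-condition term tends to $0$ at once from \eqref{plan:expbound} and $\sum_l\al_l=\infty$. For the $\upsilon_j$-term, the weights $w_j^k:=\al_j\exp(-b_2\sum_{l=j+1}^k\al_l)$ have total mass $\sum_{j=k_0}^kw_j^k$ bounded uniformly in $k$ (they approximate $\int_0^\infty e^{-b_2t}\,\d t$), while for each fixed $N$ the head $\sum_{j=k_0}^{N}w_j^k\to0$ as $k\to\infty$; this is precisely the setting of the Toeplitz lemma, so convolving the kernel against $\upsilon_j\to0$ (given by \eqref{rate condition}) forces the $\upsilon_j$-term to $0$.

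The delicate term is the one carrying $e_j$, where \eqref{rate condition} only supplies convergence of the \emph{weighted} series $\sum_j\al_je_j$, not a rate on $\al_je_j$ itself. Here I would apply summation by parts with $S_j:=\sum_{i=k_0}^j\al_ie_i$, which converges to a finite $S_\infty$ and is therefore bounded; using $\Phi_{j+1}^k-\Phi_{j+2}^k=\Phi_{j+2}^k\al_{j+1}F_{j+1}$ this gives
\begin{equation*}
\sum_{j=k_0}^k\Phi_{j+1}^k\al_je_j=S_k+\sum_{j=k_0}^{k-1}\Phi_{j+2}^k\al_{j+1}F_{j+1}S_j.
\end{equation*}
The crucial observation is that the matrix differences telescope, $\sum_{j=k_0}^{k-1}\Phi_{j+2}^k\al_{j+1}F_{j+1}=\Phi_{k_0+1}^k-I\to-I$, so replacing $S_j$ by its limit contributes $-S_\infty$ and cancels the boundary term $S_k\to S_\infty$; the residual, built from $S_j-S_\infty\to0$ against the bounded-mass kernel, vanishes by the same Toeplitz argument. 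Hence the $e_j$-term tends to $0$, and combining the three limits yields $y_k\to0$. I expect this summation-by-parts step—bookkeeping the matrix products so that the telescoping produces the exact cancellation of $S_\infty$, rather than merely a bounded contribution—to be the principal obstacle, the contraction estimate \eqref{plan:expbound} being the other technical core but routine once $P$ is in hand.
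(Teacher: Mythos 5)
The paper does not actually prove this lemma---it is imported verbatim from Chen (2006), Lemma 3.1.1---so the benchmark is the classical argument in that reference, and your proposal follows essentially the same route: a Lyapunov-equation contraction yielding the exponential bound on the transition matrices (your bound on $\Phi_j^k$ is precisely the inequality (3.1.8) that the paper later quotes as (\ref{mat bound}) in the proof of Theorem \ref{thm:asym norm}), then variation of constants, with Abel summation plus a Toeplitz argument to handle the merely conditionally convergent series $\sum_k\alpha_k e_k$. Your argument is correct as it stands; the only tightenings needed are routine (choose $k_0$ so that $1-c\alpha_k\in(0,1)$, and note $\sup_k\|F_k\|<\infty$, which follows from $F_k\rightarrow F$, when bounding the residual $\sum_{j}\Phi_{j+2}^k\alpha_{j+1}F_{j+1}(S_j-S_\infty)$).
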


\begin{lem}\label{lem:asym norm} \cite[Theorem 3.3.1]{chen2006stochastic}
	Let $\{y_k\}$ be given by (\ref{linear reccursion}) with an arbitrarily given initial value. Assume the following conditions holds:
	\begin{itemize}
		\item [\rm{(i)}]$\al_k>0,\al_{k}\rightarrow0$ as $k\rightarrow\infty$, $\sum_{k=1}\al_{k}=\infty$, and
		\begin{equation*}
		\al_{k+1}^{-1}-\al_{k}^{-1}\rightarrow a\ge 0~\text{as}~k\rightarrow\infty;
		\end{equation*}
		\item [\rm{(ii)}] $F_k\rightarrow F$ and $F+\dfrac{a}{2}$ is stable;
		\item [\rm{(iii)}]
		\begin{equation*}
		\nu_k=o(\sqrt{\al_{k}}),   \quad e_k=\sum_{t=0}^\infty C_ts_{k-t},s_t=0~\text{for}~t<0,
		\end{equation*}
		where $C_t$ are $d\times d$ constant matrices with $\sum_{t=0}^\infty \|C_t\|<\infty$ and $\{s_k,\mathcal{F}_k\}$ is a martingale difference sequence of $d-$dimension satisfying the following conditions
		\begin{equation}\label{c1}
		\mathbb{E}\left[s_k|\mathcal{F}_{k-1}\right]=0,~\sup_k\mathbb{E}\left[\|s_k\|^2\big|\mathcal{F}_{k-1}\right]\le\sigma~\text{with}~\sigma~\text{being a constant,}
		\end{equation}
		\begin{equation}\label{c2}
		\lim_{k\rightarrow\infty}\mathbb{E}\left[s_ks_k^T\big|\mathcal{F}_{k-1}\right]=\lim_{k\rightarrow\infty}\mathbb{E}\left[s_ks_k^T\right]\define S_0
		\end{equation}
		and
		\begin{equation}\label{c3}
		\lim_{N\rightarrow\infty}\sup_{k}\mathbb{E}\left[\|s_k\|^21_{\{\|s_k\|>N\}}\right]=0.
		\end{equation}
		Then $\dfrac{y_k}{\sqrt{\al_{k}}}$ is asymptotically normal:
		\begin{equation*}
		\dfrac{y_k}{\sqrt{\al_{k}}}\xrightarrow[k\rightarrow\infty]{d}N(0,S),
		\end{equation*}
		where
		\begin{equation*}
		S=\int_{0}^{\infty}e^{(F+a/2I)t}\sum_{k=0}^\infty C_kS_0\sum_{k=0}^\infty C_k^Te^{(F^T+a/2I)t}\d t.
		\end{equation*}
	\end{itemize}
\end{lem}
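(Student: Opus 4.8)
The plan is to pass to the normalized process $u_k \define y_k/\sqrt{\alpha_k}$, strip off the contributions that do not affect the limiting law, and identify the Gaussian limit of the remaining martingale part through a central limit theorem for martingale difference arrays.

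First I would scale the recursion. Dividing (\ref{linear reccursion}) by $\sqrt{\alpha_{k+1}}$ gives
\[
u_{k+1} = \sqrt{\tfrac{\alpha_k}{\alpha_{k+1}}}\,(I + \alpha_k F_k)\,u_k + \sqrt{\tfrac{\alpha_k}{\alpha_{k+1}}}\,\sqrt{\alpha_k}\,(e_k + \nu_k).
\]
Because $\alpha_{k+1}^{-1}-\alpha_k^{-1}\to a$ forces $\sqrt{\alpha_k/\alpha_{k+1}} = 1 + \tfrac{a}{2}\alpha_k + o(\alpha_k)$, the effective transition factor is $I + \alpha_k\tilde F_k$ with $\tilde F_k \to F + \tfrac{a}{2}I$; this is exactly the mechanism by which the shift $\tfrac a2 I$ enters. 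Since $F + \tfrac a2 I$ is stable by (ii), the products $\tilde\Phi_t^k \define (I+\alpha_{k-1}\tilde F_{k-1})\cdots(I+\alpha_t\tilde F_t)$ (with $\tilde\Phi_k^k \define I$) obey the decay estimate $\|\tilde\Phi_t^k\|\le b_1\exp(-b_2\sum_{j=t}^{k-1}\alpha_j)$ of \cite[Inequality (3.1.8) in Lemma 3.1.1]{chen2006stochastic}. Unrolling yields
\[
u_k = \tilde\Phi_1^k u_1 + \sum_{t=1}^{k-1}\tilde\Phi_{t+1}^k\,\sqrt{\alpha_t}\,(e_t+\nu_t),
\]
the bounded prefactors $\sqrt{\alpha_t/\alpha_{t+1}}\to1$ being harmless. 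The initial term decays exponentially and is asymptotically negligible, and the $\nu$-contribution vanishes in probability by $\nu_t = o(\sqrt{\alpha_t})$ together with the decay bound and a Toeplitz summation argument in the spirit of \cite[Lemma 3.3.2]{chen2006stochastic} — precisely how the analogous drift term was dispatched in the proof of Theorem \ref{thm:asym norm}.

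Second, I would replace the coloured noise by its aggregate. With $C\define\sum_{r\ge0}C_r$ and $e_t=\sum_{r\ge0}C_r s_{t-r}$, an Abel summation over the shift index $r$, using $\sum_r\|C_r\|<\infty$, the slow variation of $\alpha_t$, and the near-invariance of $\tilde\Phi_{t+1}^k$ under a bounded shift of $t$, shows that
\[
\sum_{t=1}^{k-1}\tilde\Phi_{t+1}^k\,\sqrt{\alpha_t}\,(e_t - C s_t)\xrightarrow{P}0.
\]
Hence $u_k$ has the same limiting law as $W_k\define\sum_{t=1}^{k-1}\xi_{k,t}$ with $\xi_{k,t}\define\sqrt{\alpha_t}\,\tilde\Phi_{t+1}^k\,C\,s_t$, and it is the aggregated covariance $CS_0C^T$ that will appear inside the limit integral. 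Third, since $\E[s_t\mid\mathcal{F}_{t-1}]=0$, for each fixed $k$ the family $\{\xi_{k,t}\}_{t<k}$ is a martingale difference array, and I would invoke the Lindeberg--Feller martingale CLT (\cite[Lemma 3.3.1]{chen2006stochastic}). For the conditional covariance,
\[
\sum_{t=1}^{k-1}\E\!\left[\xi_{k,t}\xi_{k,t}^T\mid\mathcal{F}_{t-1}\right]=\sum_{t=1}^{k-1}\alpha_t\,\tilde\Phi_{t+1}^k\,C\,\E\!\left[s_ts_t^T\mid\mathcal{F}_{t-1}\right]C^T(\tilde\Phi_{t+1}^k)^T,
\]
and invoking (\ref{c2}) to replace the inner conditional covariance by $S_0$ and reading the sum as a Riemann sum over the time change $r=\sum_{j=t+1}^{k}\alpha_j$ (increment $\alpha_t\leftrightarrow\d r$, $\tilde\Phi_{t+1}^k\approx e^{(F+\frac a2 I)r}$), this converges to $S=\int_0^\infty e^{(F+\frac a2 I)r}CS_0C^Te^{(F^T+\frac a2 I)r}\,\d r$. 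For the conditional Lindeberg condition, $\sum_t(\sqrt{\alpha_t}\|\tilde\Phi_{t+1}^kC\|)^2$ is finite (from the covariance step) while $\max_t\sqrt{\alpha_t}\|\tilde\Phi_{t+1}^kC\|\to0$, so the uniform integrability (\ref{c3}) gives $\sum_t\E[\|\xi_{k,t}\|^2 1_{\{\|\xi_{k,t}\|>\epsilon\}}\mid\mathcal{F}_{t-1}]\xrightarrow{P}0$. The CLT then yields $W_k\xrightarrow{d}N(0,S)$, and Slutsky's theorem combined with the two reductions gives $u_k=y_k/\sqrt{\alpha_k}\xrightarrow{d}N(0,S)$.

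The main obstacle is the covariance computation in the third step: turning the discrete weighted matrix sum into the matrix-exponential integral requires a uniform time-change argument that simultaneously controls the approximation $\tilde\Phi_{t+1}^k\approx e^{(F+\frac a2 I)(\tau_k-\tau_t)}$ (using $\tilde F_k\to F+\frac a2I$ and the exponential decay to localize the error near small $r$) and uses the stability of $F+\frac a2 I$ to guarantee the tail $r\to\infty$ of the integral is integrable. The coloured-noise reduction of the second step is the next most delicate point, since the Abel summation must exchange an infinite shift sum with the transition products without disturbing the leading covariance.
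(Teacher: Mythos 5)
This lemma is not proved in the paper at all: it is recalled verbatim in Appendix F from \cite[Theorem 3.3.1]{chen2006stochastic} precisely so that the proofs of Theorems \ref{thm:con rate} and \ref{thm:asym norm} can invoke it, so there is no in-paper argument to compare against. Judged on its own merits, your sketch is a faithful reconstruction of the standard proof of that theorem, essentially Chen's own architecture: the normalization $u_k=y_k/\sqrt{\alpha_k}$, which converts $\alpha_{k+1}^{-1}-\alpha_k^{-1}\to a$ into the drift shift $F+\tfrac{a}{2}I$ via $\sqrt{\alpha_k/\alpha_{k+1}}=1+\tfrac{a}{2}\alpha_k+o(\alpha_k)$ (this computation is correct); the exponential decay of the transition products from stability, which is exactly the inequality (3.1.8) the paper itself invokes in (\ref{mat bound}); disposal of the $o(\sqrt{\alpha_k})$ perturbation by the weighted-sum estimate of \cite[Lemma 3.3.2]{chen2006stochastic}, mirroring how the paper dispatches the $\eta_k'$ term in Theorem \ref{thm:asym norm}; reduction of the moving-average noise to the aggregated martingale differences $Cs_t$ with $C=\sum_{r}C_r$; and the martingale-array CLT of \cite[Lemma 3.3.1]{chen2006stochastic} together with the time-change Riemann-sum identification of $S=\int_0^\infty e^{(F+\frac{a}{2}I)t}CS_0C^Te^{(F^T+\frac{a}{2}I)t}\,\d t$. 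The covariance passage, which you correctly single out as the main obstacle, additionally needs the Toeplitz-type step replacing $\mathbb{E}[s_ts_t^T|\mathcal{F}_{t-1}]$ by $S_0$ via (\ref{c2}) before the deterministic Riemann-sum limit; with that inserted, the plan is sound.

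Two points should be tightened. First, in the coloured-noise reduction the shift $r$ in $e_t=\sum_{r\ge0}C_rs_{t-r}$ is unbounded, so ``near-invariance of $\tilde\Phi_{t+1}^k$ under a bounded shift'' does not apply directly: one must truncate at $r\le R$, bound the tail uniformly in $k$ in $L^2$ using $\sum_{r>R}\|C_r\|$ together with the martingale orthogonality and the bound $\sum_t\alpha_t\|\tilde\Phi_{t+1}^k\|^2\le\mathrm{const}$, and only then run the fixed-shift comparison for each $r\le R$; you flag this as delicate, and the truncation is the missing device. Second, (\ref{c3}) is an \emph{unconditional} uniform-integrability condition, while you assert the \emph{conditional} Lindeberg condition; since $\max_{t<k}\sqrt{\alpha_t}\,\|\tilde\Phi_{t+1}^kC\|\to0$ (by splitting early indices, where the product decays, from late ones, where $\sqrt{\alpha_t}$ is small), the unconditional Lindeberg bound $\sum_t w_{k,t}^2\sup_t\mathbb{E}\bigl[\|s_t\|^21_{\{\|s_t\|>\epsilon/w_{k,t}\}}\bigr]\to0$ with $w_{k,t}=\sqrt{\alpha_t}\|\tilde\Phi_{t+1}^kC\|$ is what (\ref{c1})--(\ref{c3}) actually deliver, and it suffices for the CLT in the form used; state it that way rather than through conditional expectations the hypotheses do not control.
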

\end{appendices}

\end{document}